%29 January 2018
%Martin A. Guest and Nan-Kuo Ho

\documentclass[12pt]{amsart}
\usepackage{latexsym, graphicx, amscd, amssymb, xypic,mathdots}
\usepackage{float,color}

\textwidth=6.5in
\evensidemargin +.0in
\oddsidemargin -.0in
\setlength{\parskip}{6pt}
\setlength{\parindent}{10pt}
\numberwithin{equation}{section}

% mathcal
\newcommand{\cA}{\mathcal{A}}

\newcommand{\cM}{\mathcal{M}}

\renewcommand{\cR}{\mathcal{R}}

% mathbb
\newcommand{\bC}{\mathbb{C}}

\newcommand{\bN}{\mathbb{N}}
\newcommand{\bR}{\mathbb{R}}
\newcommand{\bZ}{\mathbb{Z}}

% mathfrak
\newcommand{\fg}{\mathfrak{g}}
\newcommand{\fk}{\mathfrak{k}}

\newcommand{\fh}{\mathfrak{h}}
\newcommand{\fm}{\mathfrak{m}}

\newcommand{\fA}{\mathfrak{A}}
\newcommand{\fa}{\mathfrak{a}}
\newcommand{\fd}{\mathfrak{d}}
\newcommand{\fe}{\mathfrak{e}}
\newcommand{\fsl}{\mathfrak{sl}}
\newcommand{\fsu}{\mathfrak{su}}
\newcommand{\fso}{\mathfrak{so}}
\newcommand{\fsp}{\mathfrak{sp}}

%\mathrm
\newcommand{\ad}{\mathrm{ad}}
\newcommand{\Ad}{\mathrm{Ad}}

\newcommand{\Aut}{\mathrm{Aut}}
\newcommand{\End}{\mathrm{End}}
\newcommand{\rank}{\mathrm{rank}}

\newcommand{\diag}{\mathrm{diag}}
\newcommand{\Ker}{\mathrm{Ker}}

\newcommand{\Card}{\mathrm{Card}~}
\newcommand{\tr}{\mathrm{tr}}

% theorems environment
\newtheorem{thm}{Theorem}[section]

\newtheorem{lm}[thm]{Lemma}
\newtheorem{rem}[thm]{Remark}
\newtheorem{cor}[thm]{Corollary}
\newtheorem{pro}[thm]{Proposition}
\newtheorem{ex}[thm]{Example}

\newtheorem{df}[thm]{Definition}

%misc

\renewcommand\i{\sqrt{-1}}
\newcommand{\ii}{ {\scriptstyle\sqrt{-1}}\, }
\newcommand{\hb}{\hat\beta} 
\newcommand{\bb}{\beta} 
\newcommand{\ho}{\hat\omega} 
\newcommand{\st}{\ \vert\ }
\newcommand{\bp}{\begin{pmatrix}}
\newcommand{\ep}{\end{pmatrix}}
\newcommand{\bsp}{\left(\begin{smallmatrix}}
\newcommand{\esp}{\end{smallmatrix}\right)}

\newcommand{\supersec}{ \widehat{\text{Sect}}  }
\newcommand{\tildesupersec}{ \widetilde{\text{Sect}}  }
\newcommand{\sto}{ \text{Sto}  }
\newcommand{\Phiz}{  \Phi^{(0)}  }
\newcommand{\Phii}{  \Phi^{(\infty)}  }
\newcommand{\phiz}{  \phi^{(0)}  }
\newcommand{\phii}{  \phi^{(\infty)}  }
\newcommand{\simple}{\Gamma}
\newcommand{\leftMC}{  \theta_{ \rm{left}  } }
\newcommand{\rightMC}{  \theta_{ \rm{right}  } }
\newcommand{\domainset}{U}
\newcommand{\Psiz}{  \Psi^{(0)}  }
\newcommand{\tPsiz}{  \tilde\Psi^{(0)}  }
\newcommand{\Kz}{  K^{(0)}  }
\newcommand{\Sz}{  S^{(0)}  }

%this command \newbar used for \bar c^{-}_i etc
\newcommand*\newbar[1]{%
   \hbox{%
     \vbox{%
       \hrule height 0.5pt % The actual bar
       \kern-0.2ex%         % Distance between bar and symbol
       \hbox{%
         \kern+0.0em%      % Shortening on the left side
         \ensuremath{#1}%
         \kern-0.1em%      % Shortening on the right side
       }%
     }%
   }%
}

\begin{document}

\title[Kostant, Steinberg, and Stokes]{Kostant, Steinberg, and the Stokes matrices of the tt*-Toda equations}

\author{Martin A. Guest}
\address{Department of Mathematics, Waseda University, 3-4-1 Okubo, Shinjuku, Tokyo 169-8555, Japan}
\email{martin@waseda.jp}

\author{Nan-Kuo Ho}
\address{Department of Mathematics, National Tsing Hua University, Hsinchu 300, and National Center for Theoretical Sciences, Taipei 106,  Taiwan}
\email{nankuo@math.nthu.edu.tw}

\date{\today}
%\keywords{}

%\subjclass{}
\maketitle

\begin{abstract}
We propose a Lie-theoretic definition of the tt*-Toda equations for any complex simple Lie algebra $\fg$, based on the concept of 
topological-antitopological fusion which was introduced by Cecotti and Vafa. Our main result concerns the Stokes data of a certain meromorphic connection, whose isomonodromic deformations are controlled by these equations. Exploiting a framework introduced by Boalch, we show that this data has a remarkable structure, which can be described using Kostant's theory of Cartan subalgebras in apposition and Steinberg's theory of conjugacy classes of regular elements.  A by-product of this is a convenient visualization of the orbit structure of the roots under the action of a Coxeter element. As an application, we compute canonical Stokes data of certain solutions of the tt*-Toda equations in terms of their asymptotics.
\end{abstract}

\section{Introduction}\label{intro}

This article presents some new relations between Lie theory and the Stokes Phenomenon for ordinary differential equations. We focus on complex simple Lie groups and some properties of their root systems which were discovered by Kostant and Steinberg. We shall demonstrate how these properties arise naturally from the monodromy data of a certain complex o.d.e.\ 

The classical o.d.e.\ treatment of Stokes matrices was placed into the framework of  Lie algebra-valued meromorphic connections by Boalch, a point of view which has already proved useful in problems originating in physics such as Frobenius manifolds and mirror symmetry (cf.\  \cite{B_inv},\cite{B_imrn},\cite{BT12}).

Our specific motivation --- with similar origins --- is the theory of the tt* equations (topological-antitopological fusion equations), which were introduced by Cecotti and Vafa in \cite{CFIV92},\cite{CV91},\cite{CV93} as a system of nonlinear p.d.e.\ describing massive deformations of topological field theories.  The equations and their solutions also have rich connections to geometry, in particular unfoldings of singularities and quantum cohomology. 

At the time of publication of \cite{CV91},\cite{CV93}, the sinh-Gordon and Tzitzeica equations were the only examples of tt* equations where significant information was available concerning solutions. More than 20 years later, in 
\cite{GL14},\cite{GIL1},\cite{GIL2},\cite{GIL3},\cite{MoXX},\cite{Mo14}, 
the solutions of certain tt* equations of \lq\lq Toda type\rq\rq\ were studied in detail, confirming some of the predictions made on physical grounds by Cecotti and Vafa.  
These tt*-Toda equations (like the sinh-Gordon and Tzitzeica equations) are special cases of the two-dimensional Toda equations, which exist for any complex Lie algebra $\fg$. 

The examples just mentioned are for $\fg=\fsl_n\bC$.
The main objective of this article is to establish some Lie-theoretic aspects of the tt*-Toda equations which will be required in order to generalize the above results to other $\fg$.  Thus, it is the first step in a general treatment of the equations and their solutions in the context of physics and geometry. 
We emphasize that our work is quite different from the existing Lie-theoretic literature on the Toda equations. This is because we do not use the Toda equations directly, but rather an associated system of meromorphic linear o.d.e.\ with irregular singularities. The tt*-Toda equations arise as the equations describing isomonodromic deformations of this meromorphic system.

For general $\fg$, the very definition of the tt*-Toda equations presents some challenges. After giving some background and motivation in section \ref{back}, in section \ref{eqns} we propose a definition which (we believe) represents the essential features of the examples in the literature. Roughly speaking, this imposes three conditions on the Toda equations for $\fg$:

(R) reality condition

(F) Frobenius condition

(S) similarity (or homogeneity) condition

\noindent These are stated precisely in Definition \ref{tt*-Toda}, and in
Definition \ref{hatalpha} we give the associated meromorphic system.
An important ingredient of (R) and (F) is a certain split real form of 
$\fg$, first arising in work of Hitchin, based on Kostant's theory of three-dimensional subalgebras and Cartan subalgebras in apposition.  

To our surprise, the ensuing theory provides geometrical  insight into some classical results on root systems due to Kostant and Steinberg, as well as the famous \lq\lq Coxeter Plane\rq\rq.  It is of course not surprising that any properties of the Toda equations can be described Lie-theoretically, but we believe it is interesting that the tt*-Toda equations bring to life some rather subtle properties of roots which have apparently not been used widely by geometers. 

In sections \ref{data} and \ref{lie} we investigate the space 
 of \lq\lq abstract Stokes data\rq\rq\ for the tt*-Toda equations.  These two sections are based entirely on the symmetries of the equations, without considering the corresponding solutions.  We summarize this briefly next --- a more technical description with precise statements can be found at the beginning of section \ref{lie}.

The meromorphic system has poles of order $2$ at zero and infinity. Locally this equation has $G$-valued fundamental solutions, where $G$ is a Lie group with Lie algebra $\fg$. Classical o.d.e.\ theory, extended by Boalch, then produces \lq\lq Stokes matrices\rq\rq\ which are elements of certain unipotent subgroups of $G$. By the Riemann-Hilbert correspondence, the Stokes data parametrizes 
isomonodromic deformations of the system, hence solutions of the tt*-Toda equations.  It is therefore of great interest to compute this data.

Unfortunately, computations of Stokes matrices for meromorphic o.d.e.\  are generally quite tedious, and dependent on a number of arbitrary choices.  Once the special structure of our situation has been elucidated, however, the computation becomes relatively straightforward, and furthermore we obtain {\em canonical} Stokes data.  It is then easy to recover the classical Stokes matrices and observe how they depend on the relevant choices. 

The main features of this special structure are as follows. We identify a certain element $M^{(0)}\in G$
(Definition \ref{def:qqpi}), from which all Stokes data can be recovered. It can be regarded as an $s$-th root of the monodromy of the meromorphic system, where $s$ is the Coxeter number. To describe it we use a decomposition of the set of roots of $\fg$ due to Kostant and Steinberg, and prove that the components of this decomposition are in one-to-one correspondence with Stokes factors, or, equivalently, singular directions (certain rays in $\bC$ determined by the meromorphic system). There are $2s$ singular directions, denoted $d_1,\dots,d_{2s}$ (see Figure \ref{singulardirection} in section \ref{lie}). The positive roots correspond to $d_1,\dots,d_{s}$ (the positive sector), and the simple roots correspond to $d_1,d_s$ (the head and tail of the positive sector).  

It is the roots corresponding to $d_1,d_2$ which give rise to
$M^{(0)}$ (see Figure \ref{Roots} in section \ref{lie}).  These roots have a purely Lie-theoretic interpretation: they are the positive roots which become negative under the action of a certain Coxeter element, and they constitute a fundamental domain for the action of this Coxeter element on the set of all roots.
Using work of Steinberg, when $G$ is simply-connected, we show that the space of all possible $M^{(0)}$
can be identified with a fundamental domain for the action of $G$ (by conjugation) on the space $G^{\text{reg}}$ of regular elements of $G$.  This gives an intrinsic description of
$M^{(0)}$, and hence of all Stokes data.

All this information can be visualized in the Coxeter Plane (see \cite{Kostant10}), a beautiful object of independent interest which has been described as the \lq\lq most symmetrical\rq\rq\ projection of the root vectors of $\fg$ onto a certain $2$-dimensional plane. We explain this in Appendix \ref{sec:coxeterplane}. 

In section \ref{local} we relate the abstract data $M^{(0)}$ to some actual solutions of the tt*-Toda equations.  We focus on solutions which are defined on a punctured neighbourhood of the origin --- these include the solutions on $\bC^\ast$ which are of primary interest in physics.  For such a solution
we obtain an explicit formula for $M^{(0)}$ in terms of the asymptotics of the solution at the origin (Theorem \ref{explicitQQPi}).  
It turns out that the semisimple part of $M^{(0)}$ lies in the compact real form of $G$. This leads to an identification of the space of all such $M^{(0)}$ (a proper subspace of $\cM_{\Delta_+}$) with a certain convex polytope (Theorem \ref{convexset}). 
We expect (but do not prove here) that this convex polytope also parametrises the solutions on $\bC^\ast$, as in the case 
$G=Sl_{n+1}\mathbb C$. 

The auxiliary meromorphic connection used in section \ref{local} arises in the context of the geometric Langlands correspondence --- see formulas (5.1) and (5.2) of \cite{FG}.

We remark that some of our conclusions for the case $G=Sl_{n+1}\mathbb C$ can be found in \cite{GH}, where we used exclusively the ad hoc notation of \cite{GIL1},\cite{GIL2},\cite{GIL3} and gave \lq\lq proofs by inspection\rq\rq. Here, for general $G$, we go further, and we use more systematic notation and give general proofs.  However, simply putting $G=Sl_{n+1}\mathbb C$ is not sufficient to obtain the results of \cite{GH}.

{\em Acknowledgements:}  The authors thank Eckhard Meinrenken for useful conversations, and for suggesting the relevance of the Coxeter Plane and the article \cite{Kostant10} by Kostant.  They thank Bill Casselman for clarifying the relation between the Coxeter Plane and Coxeter groups (\cite{Ca17} and Appendix \ref{sec:coxeterplane}). They also thank Philip Boalch for discussions, and for pointing out the article \cite{FG} by Frenkel and Gross.
The first author was partially supported by JSPS grant (A) 25247005. He is grateful to the National Center for Theoretical Sciences for excellent working conditions and financial support. The second author was partially supported by MOST grants 105-2115-M-007-006 and 106-2115-M-007-004.

\section{Background on the tt$^*$ and Toda equations}\label{back}

\noindent{\em Background on the tt* equations}

We take as starting point the formulation of Dubrovin \cite{Du93}: the  tt* equations (on a certain two-dimensional domain $U\in\bC$, to be specified later)
are the equations for harmonic maps 
\[
f:\domainset\to GL_n\bR/O_n
\]
together with a \lq\lq similarity condition\rq\rq\ (also specified later).  
It is well known that these equations can be written in zero curvature form, i.e.\
$d\alpha+\alpha\wedge\alpha=0$, for a certain complex $n\times n$-matrix-valued $1$-form $\alpha$ (\cite{BP94}; see also 
\cite{Gu97}).  Indeed, this holds for harmonic maps $\domainset\to G/K$ for any (real) symmetric space $G/K$, and in this context the only requirement on the $\fg^{\mathbb C}$-valued $1$-form $\alpha$ is that it be of the form
\[
\alpha=\tfrac1\lambda \alpha^\prime_{\fm}
+ \alpha_{\fk}
+\lambda \alpha^{\prime\prime}_{\fm}
\]
where $\fg=\fk\oplus\fm$ is a Cartan decomposition of the Lie algebra of the (real) Lie group $G$. Here $\alpha^\prime_{\fm}, \alpha^{\prime\prime}_{\fm}$ take values in $\fm_{1,0}$, 
$\fm_{0,1}$ where 
$\fm^{\mathbb C}
=\fm_{1,0}\oplus\fm_{0,1}$, 
$\alpha_{\fk}$ takes values in $\fk^{\mathbb C}$, and $\lambda$ is a nonzero complex parameter. 

The particular symmetric space  $GL_n\bR/O_n$ can be identified with the set of all  inner products on $\bR^n$. Thus, $f$ may be regarded as a harmonic metric in the trivial vector bundle over $\domainset$ with fibre $\bR^n$ --- it is an example of a harmonic bundle. More precisely, it is an example of a real harmonic bundle. Harmonic bundles were introduced by Hitchin and Simpson in the 1980's, and locally (i.e.\ for trivial bundles) they correspond to harmonic maps into $GL_n\bC/U_n$,  the set of all Hermitian inner products on $\bC^n$.

The theory of real harmonic bundles over compact Riemann surfaces was modified by Hitchin to accommodate bundles with other structure groups, in \cite{Hi92}. Locally these correspond to harmonic maps $f:\domainset\to G/K$ where $G$ is a split real form of a complex semisimple Lie group and $K$ is a maximal compact subgroup of $G$. 

\noindent{\em Background on the Toda equations}

The two-dimensional Toda equations with respect to a complex Lie algebra  were introduced and studied simultaneously by several authors 
(see \cite{MOP81},\cite{Wi81},\cite{DS85},\cite{OT83} for four rather different viewpoints, and more recently \cite{NiRa07}), generalizing much earlier work on the one-dimensional case. Its relation with harmonic maps was soon recognised by differential geometers (cf.\ 
\cite{BPW95},\cite{BP94},\cite{Mc94}). 

To state the equations we need some notation. From now on, we denote by $G$ a {\em complex} simple Lie group of rank $l$, and by $\fg$ its Lie algebra.  Let $\fh$ be a Cartan subalgebra of $\fg$, and let
\[
\fg = \fh \oplus\left(  \oplus_{\alpha\in\Delta} \ \fg_\alpha \right)
\]
be the root space decomposition, 
where $\Delta$ is the set of roots with respect to $\fh$, and where
$\fg_\alpha=\{ \xi\in\fg \st [h,\xi]=\alpha(h)\xi \ \forall h\in\fh\}$. 
By definition $\dim_{\mathbb C}\fh = l$.  

Let $B$ be any positive scalar multiple of the Killing form. Then $B$ is a nondegenerate symmetric bilinear form on $\fg$ which is invariant in the sense that
$B(\Ad(g)h_1,\Ad(g)h_2)=B(h_1,h_2)$ 
and 
$B(\ad(x)h_1,h_2)+B(h_1,\ad(x)h_2)=0$ 
for all $g\in G$, $x\in\fg$, and $h_1,h_2\in\fg$. Here $\Ad,\ad$ denote respectively the adjoint actions of $G,\fg$ on $\fg$. 

For each $\alpha\in\Delta$, we define $H_\alpha\in\fh$ by 
\[
B(h,H_\alpha)=\alpha(h) \quad \forall h\in\fh.
\]
For any choice of simple roots $\Pi=\{\alpha_1,\dots,\alpha_l\}$ we obtain a basis $H_{\alpha_1},\dots,H_{\alpha_l}$ of $\fh$.  
We denote by $\epsilon_1,\dots,\epsilon_l$ the basis of $\fh$ which is dual to $\alpha_1,\dots,\alpha_l$ in the sense that
$\alpha_i(\epsilon_j)=\delta_{ij}$.
For each $\alpha\in\Delta$, the root space $\fg_\alpha$ is one-dimensional. 
It is possible (see \cite{Hel}, chapter 3, Theorem 5.5) to choose basis vectors $e_\alpha\in\fg_\alpha$ so that $B(e_\alpha,e_{-\alpha})=1$ for all $\alpha\in\Delta$. Define the number $N_{\alpha,\beta}$ by 
$[e_\alpha,e_\beta]=N_{\alpha,\beta}e_{\alpha+\beta}$ if $\alpha+\beta$ is a root and $N_{\alpha,\beta}=0$ if not. It follows that $N_{-\alpha,-\beta}=-N_{\alpha,\beta}$. Thus we have 
\begin{equation}\label{brackets}
[e_\alpha,e_\beta]=
\begin{cases}
0 \text{\ if $\alpha+\beta\notin\Delta$}
\\
H_\alpha \text{\ if $\alpha+\beta=0$}
\\
N_{\alpha,\beta} e_{\alpha+\beta} \text{\ if $\alpha+\beta\in\Delta-\{0\}$}
\end{cases}
\end{equation}

The highest root $\psi\in\Delta$ may be written $\psi=\sum_{i=1}^l q_i \alpha_i$ for certain nonnegative integers $q_i$. The Coxeter number of $G$ is then
\[
s=1+\sum_{i=1}^l q_i.
\]
For notational convenience we write $\psi=-\alpha_0$ from now on. 

Now let $k_0,\dots,k_l$ be any nonzero complex numbers.
The two-dimensional (complex, elliptic) Toda equations are the equations
\begin{equation}\label{Toda}
2w_{z \bar z} = -\sum_{i=0}^l k_i e^{ -2\alpha_i(w)} H_{\alpha_i}
\end{equation}
for maps $w:\domainset\to\fh$.  These are often called the \lq\lq affine\rq\rq\  or \lq\lq periodic\rq\rq\ Toda equations.  The  \lq\lq open\rq\rq\ Toda equations are obtained by putting $k_0=0$; these are much easier to solve (see \cite{LS92}) and we do not discuss them. 

Concrete p.d.e.\ are obtained from (\ref{Toda}) by choosing coordinates on $\fh$, and we shall give three popular choices. 

(I) {\em $u_i=\alpha_i(w)=B(w,H_{\alpha_i}), 1\le i\le l$.}

\noindent  Introducing the notation $u_0=\alpha_0(w)=-\sum_{j=1}^l q_j \alpha_j (w) = -\sum_{j=1}^l q_j u_j$, equation (\ref{Toda}) becomes
\[
2(u_i)_{z \bar z} =  -\sum_{j=0}^l k_j  \alpha_i(H_{\alpha_j}) e^{ -2u_j},
\quad
1\le i\le l
\]
(which is valid also for $i=0$). 

(II) {\em $v_i=B(w,\epsilon_i), 1\le i\le l$ where $\alpha_i(\epsilon_j)=\delta_{ij}$.}

\noindent To calculate the right hand side of the Toda equations in terms of the $v_i$ we use
\begin{align*}
B(H_{\alpha_j},\epsilon_i)&=
\begin{cases}
\ \delta_{i,j}\quad 1\le j\le l
\\
-q_i\quad j=0
\end{cases}
\\
\alpha_j(w)&=
\begin{cases}
\ \ \sum_{k=1}^l \alpha_j(H_{\alpha_k}) v_k \quad 1\le j\le l
\\
-\sum_{j,k=1}^l q_j\alpha_j(H_{\alpha_k}) v_k \quad j=0
\end{cases}
\end{align*}
Then equation (\ref{Toda}) becomes
\[
2(v_i)_{z \bar z} =  
q_ik_0 e^{\textstyle  \sum_{j,k=1}^l  2q_j\alpha_j(H_{\alpha_k}) v_k  }
-k_i e^{\textstyle  -\sum_{k=1}^l  2\alpha_i(H_{\alpha_k}) v_k  }.
\]

Using the coroots $\alpha^\ast_j=\tfrac{2}{\alpha_j(H_{\alpha_j})}H_{\alpha_j}$, or the affine Cartan matrix $A$, where $A_{j,i}= \alpha_i(\alpha^\ast_j)$, (I) or (II) may be rewritten in various ways.  For example, by making obvious variable changes, we may write (I) in the form
$(u_i)_{z \bar z} =  \pm
\sum_{j=0}^l A_{j,i} e^{u_j}$, $1\le i\le l$.

(III) {\em $w_i=$ $i$-th coordinate of a faithful representation.}

\noindent For example, each classical matrix Lie algebra has a standard representation in which there is a Cartan subalgebra represented by diagonal matrices.  

\begin{ex}\label{An-0}
{\em
The case $\fg=\fsl_{n+1}\mathbb C=\{ X\in M_{n+1}\mathbb C \st \tr X=0\}$.  

$\fh =\{ \diag(h_0,\dots,h_n) \st h_0,\dots,h_n\in\mathbb C, \sum_{i=0}^n h_i = 0\}$

roots: $x_i-x_j \ (0\le i\ne j\le n)$, where $x_i:\diag(h_0,\dots,h_n)\mapsto h_i$

simple roots: $x_{i-1}-x_i, 1\le i\le n$

$B(X,Y)=\tr XY$

$H_{x_i-x_j}=E_{i,i}-E_{j,j}$, $e_{x_i-x_j}=E_{i,j}$ 

$\psi=x_0-x_n=\alpha_1+\cdots+\alpha_n=-\alpha_0$, $s=n+1$

$\epsilon_i=\left(1-\tfrac{i}{n+1}\right)
(E_{0,0}+\cdots+E_{i-1,i-1})-
\tfrac{i}{n+1}(E_{i,i}+\cdots+E_{n,n})$

\noindent where $E_{i,j}\  (0\le i,j\le n)$ has a $1$ in the $(i,j)$ entry and all other entries zero.  

Writing $w=\diag(w_0,\dots,w_n)$ (the natural coordinates for the standard representation) we find that $u_i=w_{i-1}-w_i$ and $v_i=w_0+\cdots+w_{i-1}$
for $1\le i\le n$.  The three forms of the Toda equations are:

(I)\quad  $2(u_i)_{z \bar z} = k_{i-1} e^{-2u_{i-1}}  - 2k_{i} e^{-2u_{i}} +  k_{i+1} e^{-2u_{i+1}} $,
\quad
$1\le i\le n$

(II)\quad  $2(v_i)_{z \bar z} = k_0 e^{2(v_1+v_n)}  - k_{i} e^{2(v_{i+1}-2v_i+v_{i-1})}$,
\quad
$1\le i\le n$

(III)\quad $2(w_i)_{z \bar z} = k_{i} e^{2(w_i-w_{i-1})}  - k_{i+1} e^{2(w_{i+1}-w_i)}$,
\quad
$0\le i\le n$

\noindent where we take the standard representation of $\fsl_{n+1}\bC$ in (III). Here we regard $w_i$ as periodic in $i$, i.e.\ 
$w_{i+n+1}=w_{i}$. Note that $\sum_{i=0}^n w_i=0$. 
We regard $v_0=0=v_{n+1}$.  
\qed
}
\end{ex}

\noindent{\em Towards the tt*-Toda equations}

In the next section we shall combine the tt* and Toda equations.
We conclude this section by giving some historical remarks and explaining briefly the reason for using a split real form in the tt* equations. The basic motivation (in both physics and differential geometry) comes from variations of Hodge structures (which can be generalized to tt* geometry and the theory of harmonic bundles).   In fact the open tt*-Toda equations do describe examples of variations of Hodge structures (see section 6 of \cite{CV91} and \cite{ChXX}). Physically this is called the \lq\lq conformal\rq\rq\ case. The affine tt*-Toda equations are generalizations of variations of Hodge structures (called nonabelian Hodge structures in \cite{KKP07}). Physically this is called the \lq\lq massive\rq\rq\ case. Concrete examples of tt* equations of \lq\lq Toda type\rq\rq\ were introduced and studied from the physical point of view  in 
\cite{CV91},\cite{CV93}. On the other hand,
Aldrovandi and Falqui \cite{AF95} were the first to use the language of harmonic bundles, in the open (conformal) case and for $G=SL_n\mathbb C$.  Later, and more systematically, Baraglia \cite{Ba15} specialized Hitchin's theory of real harmonic $G$-bundles to the case of harmonic $G$-bundles of Toda-type over compact Riemann surfaces.  For $G=SL_n\mathbb C$ these \lq\lq cyclic harmonic bundles\rq\rq\ were also discovered by Simpson \cite{Si09}. We shall give a precise definition based on all these ideas in the next section.

\section{The tt*-Toda equations}\label{eqns}

As in the discussion of the Toda equations in section \ref{back}, we take $G$ to be a complex simple Lie group of rank $l$, with Lie algebra $\fg$. We choose a Cartan subalgebra $\fh$ and 
system of positive roots  $\Delta_+$.

\subsection{The connection form $\alpha$}\label{sec:connection_alpha}

It is well known that the Toda equations for $w:\domainset\to\fh$ are equivalent to the condition that a certain connection $d+\alpha$ is flat.  Let us review this.

For any nonzero complex numbers $c^{-}_i,c^{+}_i$ with $0\le i\le l$, we define
\[
E_-=\sum_{i=0}^l c^{-}_i e_{-\alpha_i},
\quad
E_+=\sum_{i=0}^l c^{+}_i e_{\alpha_i}
\]
and
\[
\tilde E_-=\Ad(e^w)E_-,
\quad
\tilde E_+=\Ad(e^{-w})E_+.
\]
\begin{df}\label{alphadefinition}
 Let $\alpha=\alpha^\prime dz + \alpha^{\prime\prime}d\bar z$, where 
$\alpha^\prime = w_z + \tfrac1\lambda \tilde E_-$ and
$\alpha^{\prime\prime}= -w_{\bar z} + \lambda \tilde E_+$, and where $\lambda\in\mathbb C^\ast$ is a parameter.
\end{df}

\begin{pro}\label{zcc}
(1) $d\alpha+\alpha\wedge\alpha=0$ for all $\lambda$ if and only if
$2w_{z\bar z} = [ \tilde E_-,\tilde E_+ ]$.
(2) $[ \tilde E_-,\tilde E_+ ]=-\sum_{i=0}^l c^{-}_ic^{+}_i e^{-2\alpha_i(w)} H_{\alpha_i}$.
\end{pro}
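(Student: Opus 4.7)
The plan is to verify the zero-curvature equation $\alpha''_z - \alpha'_{\bar z} + [\alpha', \alpha''] = 0$ componentwise in $\lambda$, then to expand the bracket in (2) using the root-space decomposition.

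For part (1), the key simplification is that $w$ takes values in the abelian subalgebra $\fh$, so $w$ commutes with $w_z$ and $w_{\bar z}$, and $[w_z, w_{\bar z}] = 0$. Writing $\tilde E_- = e^{\ad w} E_-$ and $\tilde E_+ = e^{-\ad w} E_+$, this commutativity yields
\[
\partial_{\bar z}\tilde E_- = [w_{\bar z}, \tilde E_-], \qquad \partial_z \tilde E_+ = -[w_z, \tilde E_+].
\]
Substituting these into $\alpha''_z - \alpha'_{\bar z} + [\alpha', \alpha'']$ and expanding the commutator
\[
\left[w_z + \tfrac{1}{\lambda}\tilde E_-,\ -w_{\bar z} + \lambda \tilde E_+\right],
\]
the $\lambda^{\pm 1}$ terms cancel in pairs against $-[w_z,\tilde E_+]$ from $\alpha''_z$ and $[w_{\bar z},\tilde E_-]$ from $\alpha'_{\bar z}$. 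What remains is $-2w_{z\bar z} + [\tilde E_-, \tilde E_+]$, which has no $\lambda$-dependence at all, so requiring this to vanish for one (hence every) value of $\lambda$ is equivalent to the stated Toda-type equation.

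For part (2), since $[h, e_{\pm\alpha_i}] = \pm\alpha_i(h) e_{\pm\alpha_i}$ for $h\in\fh$, we have $\Ad(e^w)e_{-\alpha_i} = e^{-\alpha_i(w)} e_{-\alpha_i}$ and $\Ad(e^{-w})e_{\alpha_j} = e^{-\alpha_j(w)} e_{\alpha_j}$, giving
\[
[\tilde E_-, \tilde E_+] = \sum_{i,j=0}^l c_i^- c_j^+ \, e^{-(\alpha_i+\alpha_j)(w)}\, [e_{-\alpha_i}, e_{\alpha_j}].
\]
By the bracket relations (\ref{brackets}), the diagonal terms $i=j$ contribute $-H_{\alpha_i}$ (from $H_{-\alpha_i}=-H_{\alpha_i}$), producing exactly the claimed sum. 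It remains to show that the off-diagonal terms vanish, i.e.\ that $\alpha_j - \alpha_i \notin \Delta$ for $i\neq j$ in $\{0,1,\dots,l\}$. For $i,j \in \{1,\dots,l\}$ this is the standard fact about simple roots. The new cases involve $\alpha_0 = -\psi$: if $j=0$, then $\alpha_0 - \alpha_i = -\psi - \alpha_i$, which is not a root because $-\psi$ is the lowest root; symmetrically, if $i=0$, then $\alpha_j - \alpha_0 = \psi + \alpha_j$ is not a root because $\psi$ is the highest root. Hence all off-diagonal brackets vanish.

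The only delicate point is the $\lambda$-bookkeeping in part (1) — keeping track of signs and factors of $\lambda^{\pm 1}$ carefully enough to see that everything except $-2w_{z\bar z}+[\tilde E_-,\tilde E_+]$ cancels — but there is no genuine obstacle once the abelian property of $\fh$ is used. Part (2) is routine given the highest-root observation.
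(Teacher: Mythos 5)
Your proposal is correct and follows exactly the computation the paper has in mind: the paper's proof of (1) is just the statement "elementary calculation," and of (2) that it "follows immediately from $\Ad(e^w)=e^{\ad(w)}$ and (\ref{brackets})," so you are simply supplying the omitted details (the cancellation of the $\lambda^{\pm1}$ terms using $[\fh,\fh]=0$, and the vanishing of the off-diagonal brackets because a difference of distinct simple roots, or $\pm(\psi+\alpha_j)$, is never a root). All signs and the highest-root argument check out; there is nothing to add.
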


\begin{proof} (1) is an elementary calculation. (2) follows immediately from $\Ad(e^w)=e^{\ad(w)}$ and (\ref{brackets}).
\end{proof}

Thus the zero-curvature condition $d\alpha+\alpha\wedge\alpha=0$ is equivalent to
\begin{equation*}
2w_{z \bar z} = -\sum_{i=0}^l c^{-}_ic^{+}_i e^{ -2\alpha_i(w)} H_{\alpha_i}.
\end{equation*}
If we write $k_i=c^{-}_ic^{+}_i$
this gives exactly the Toda equations (\ref{Toda}).

The parameter $\lambda$ is not essential at this point (if $l\ge2$, Proposition \ref{zcc} remains true if we set $\lambda=1$). However it will play an important role very soon.

For the Toda equations, it will be convenient to use the following definition of real form: 

\begin{df}\label{realToda}
Given a real form of the Lie algebra $\fg$, i.e.\ the fixed point set of a conjugate-linear involution on $\fg$, the corresponding real form of the Toda equations is defined by imposing the conditions

(R1) $\alpha_i(w)\in\mathbb R$ for all $i$.

(R2) $\alpha^\prime(z,\bar z,\lambda)\mapsto \alpha^{\prime\prime}(z,\bar z,1/\bar\lambda)$ under the involution.
\end{df}

\noindent
Condition (R1) means that $w$ takes values in the real vector space
\[
\fh_\sharp=\oplus_{i=1}^l\mathbb R H_{\alpha_i}
\]
(note that this is not necessarily a subspace of the given real form of $\fg$).  
Condition (R2) implies that $\alpha\vert_{\vert\lambda\vert=1}$ takes values in the real form of $\fg$. We regard these as conditions as constraints on the function $w$ (and the constants $c^{-}_i,c^{+}_i$).  Depending on the real form, there may be few or no functions $w$ which satisfy these conditions.  We give two examples to illustrate this.

\begin{df}\label{compact} The (standard) compact real form of $\fg$ is the fixed point space $\fg_{\text{\em cpt}}$ of the conjugate-linear involution $\rho$ defined by $\rho(e_\alpha)=-e_{-\alpha}$,
$\rho(H_\alpha)=-H_{\alpha}$
for all $\alpha\in\Delta$.  
\end{df}

An $\mathbb R$-basis of $\fg_{\text{cpt}}$ is given by $\ii H_{\alpha_i}$ ($1\le i\le l$), 
$\ii(e_\alpha+e_{-\alpha})$, $e_\alpha-e_{-\alpha}$ ($\alpha\in\Delta^+$).
Condition (R2) holds if $c^{+}_i=-\newbar{c^{-}_i}$ for all $i$.  Then all $k_i=c^{-}_ic^{+}_i$ are negative.  This is the real form of the Toda equations studied most widely in differential geometry.

\begin{df}\label{split}
The (standard) split real form of $\fg$ is the fixed point 
space $\fg_{\text{\em split}}$ of the conjugate-linear involution $\theta$ defined by $\theta(e_\alpha)=e_{\alpha}$,
$\theta(H_\alpha)=H_{\alpha}$
for all $\alpha\in\Delta$.  
\end{df}

An $\mathbb R$-basis of $\fg_{\text{split}}$ is given by $H_{\alpha_i}$ ($1\le i\le l$), 
$e_\alpha$ ($\alpha\in\Delta$).  In this case condition (R2) is not satisfied with respect to $\theta$ for any $w$, hence there is no corresponding real form of the Toda equations.   However, there are other split real forms, and we shall explain how to find a suitable one, after looking at a concrete example.

\begin{ex}\label{An-1} (Example \ref{An-0} continued)
{\em 
The case $\fg=\fsl_{n+1}\mathbb C$.  
Condition (R1) simply means that $w=\diag(w_0,\dots,w_n)$ with all $w_i$ real.  The compact real form $\fsu_{n+1}$ is given by $\rho(X)=-X^\ast$, and the real Toda equations are
\[
2(w_i)_{z \bar z} = k_{i} e^{2(w_i-w_{i-1})}  - k_{i+1} e^{2(w_{i+1}-w_i)}
\]
with all $k_i$ negative. By adding constants to the functions $w_i$ we can make $k_i=-1$ for all $i$. Thus, the particular values of the real numbers $k_i$ are not important, just their sign.

The standard split real form $\fsl_{n+1}\mathbb R$ is given by $\theta(X)=\bar X$. As we have seen, there are no corresponding Toda equations.  On the other hand, the split real form  given by $\chi(X)=\Delta \bar X \Delta$, where
\begin{equation}\label{delta}
\Delta=
\begin{pmatrix}
  & & 1 \\
  & \iddots \, & \\
1 & &
\end{pmatrix},
\end{equation}
will turn out to be a suitable candidate for the harmonic bundle equations or the tt*-Toda equations.  Conditions (R1),(R2) are be satisfied if the $w_i$ are real and satisfy the additional conditions $w_i+w_{n-i}=0$, and if we take (for example) all $k_i=1$.
In this situation the real Toda equations are
\begin{equation}\label{Ce-Va-Toda}
2(w_i)_{z \bar z} =  e^{2(w_i-w_{i-1})}  -  e^{2(w_{i+1}-w_i)},
\quad
w_i+w_{n-i}=0.
\end{equation}
These equations, and the formula $\chi(X)=\Delta \bar X \Delta$, were derived on physical grounds in \cite{CV91}, section 6, and from the viewpoint of harmonic maps in \cite{GL14}, section 2.  It should be noted that the fixed point set of $\chi$ is isomorphic to the standard split real form
$\fsl_{n+1}\mathbb R$ --- indeed it is a general fact that all split real forms are conjugate.
Thus, although the split real forms are all equivalent, the form of the Toda equations is very sensitive to the particular choice (relative to the initial choice of compact real form and Cartan subalgebra).
\qed
}
\end{ex}

For general $G$, a suitable split real form has been constructed in another context by Hitchin \cite{Hi92} and Baraglia \cite{Ba15}, based on work of Kostant \cite{Ko59}.   A recent and more detailed treatment can be found in \cite{La17}. We review this next.

First some properties of principal three-dimensional subalgebras (TDS) from \cite{Ko59} are needed.  We put
\[
x_0=\sum_{i=1}^l \epsilon_i = \sum_{i=1}^l r_i H_{\alpha_i}
\]
and
\[
e_0=\sum_{i=1}^l a_i e_{\alpha_i},
\quad
f_0=\sum_{i=1}^l (r_i/a_i) e_{-\alpha_i}.
\]
The real numbers $r_1,\dots,r_l$ are determined by the choice of simple roots, and $a_1,\dots,a_l$ are (arbitrary) nonzero complex numbers.  
Direct computation gives
\begin{equation}\label{tds}
[x_0,e_0]=e_0,\quad
[x_0,f_0]=-f_0,\quad
[e_0,f_0]=x_0.
\end{equation}
In the terminology of \cite{Ko59}, $x_0,e_0,f_0$ generate a principle TDS. This is isomorphic to 
$\fsl_2\mathbb C$ and its adjoint action on $\fg$ decomposes into irreducible representations $V_1,\dots,V_l$ of dimensions $2m_1+1,\dots,2m_l+1$. The positive integers $m_i$ are called the exponents of $G$, and they satisfy
$1=m_1<m_2\le \cdots \le m_{l-1}<m_l=s-1$ (unless $\fg=\fsl_2\mathbb C$, in which case $l=1$ and  $m_1=1$).
Let $u_1,\dots,u_l$ be highest weight vectors for $V_1,\dots,V_l$.  We may take $u_1=e_0$ and $u_l=e_\psi$ (but in general $u_i$ cannot easily be expressed in terms of the root vectors).  It is known that $u_1,\dots,u_l$ are eigenvectors of $\ad x_0$ with eigenvalues $m_1,\dots,m_l$.

Next let 
\[
z_0=e_0+e_{-\psi}.
\]
Kostant shows that $z_0$ is semisimple, and regular in the sense that its centralizer
\[
\fg^{z_0}=\{ x\in\fg \st [x,z_0]=0 \} 
\]
has dimension $l$, and in fact is a Cartan subalgebra of $\fg$.   It is in {\em apposition} (page 1018 of \cite{Ko59}) to the original Cartan subalgebra $\fh$ with respect to the {\em principal element}
\[
P_0=e^{2\pi\i x_0/s} \in G.
\]
The linear transformation 
\begin{equation}\label{tau-def}
\tau=\Ad P_0
\end{equation}
preserves $\fg^{z_0}$.  
It is clear from the definition of $\tau$ that
\begin{equation*}
\tau (e_\alpha)= e^{2\pi\i\alpha(x_0)/s} e_\alpha
= e^{2\pi\i\,\text{ht}\,\alpha\,/s} e_\alpha,
\end{equation*}
where we define the height of a root by
$\text{ht}(\sum_{i=1}^l n_i \alpha_i)=\sum_{i=1}^l n_i$.
The vectors $u_1,\dots,u_l$ are eigenvectors of $\tau$ with eigenvalues $e^{2\pi\i m_1/s},\dots,e^{2\pi\i m_l/s}$.

Let us now take $a_i=\sqrt{r_i}$.
Hitchin \cite{Hi92} (see also section 2 of \cite{La17}) shows that there is a $\mathbb C$-linear involution $\sigma:\fg\to\fg$ which is uniquely defined by the conditions 
\begin{equation}\label{sigma-def}
\sigma(u_i)=-u_i \ (i=1,\dots,l), \ \sigma(f_0)=-f_0.
\end{equation}
Uniqueness is clear as a homomorphism $\sigma$ necessarily satisfies 
$\sigma((\ad\,f_0)^k u_i)= (-1)^{k+1} (\ad\,f_0)^k u_i$ and the elements $(\ad\,f_0)^k u_i$ span $\fg$.  Thus the main point is that the map $\sigma$ defined by $\sigma((\ad\,f_0)^k u_i)= (-1)^{k+1} (\ad\,f_0)^k u_i$ is actually a homomorphism.  This is Proposition 6.1 (1) of \cite{Hi92}.

Using $\sigma$, we introduce the conjugate-linear involution 
\begin{equation}\label{chi-def}
\chi=\sigma\rho
\end{equation}
where $\rho$ defines the standard compact real form (Definition \ref{compact}).
We have $\sigma\rho = \rho\sigma$ since this holds on each $V_i$. 
It can be shown that $\chi$ defines a split real form --- this is Proposition 6.1 (2) of \cite{Hi92}.  

We shall need to know more about $\sigma$.  A first observation is that
\[
\sigma(\fh)=\fh
\]
because the $(\ad\,f_0)^{m_i} u_i$ ($1\le i\le l$) are a basis of $\fh$, and 
$\sigma((\ad\,f_0)^{m_i} u_i)=(-1)^{m_i+1} (\ad\,f_0)^{m_i} u_i$.  

Next, since $[x,\sigma(e_\alpha)]=\sigma[\sigma(x),e_\alpha]=
\alpha(\sigma(x)) \sigma(e_\alpha)$ for all $x\in\fh$, we see that $\sigma(e_\alpha)$ is a root vector for $\alpha\circ\sigma$, and hence a multiple of 
$e_{\alpha\circ\sigma}$.  Let us write $\sigma(e_\alpha)=t_\alpha e_{\alpha\circ\sigma}$.   
For any simple root $\alpha_i$, the root  $\alpha_i\circ\sigma$ must also be simple. Namely, if $\alpha_i\circ\sigma = \sum_{j=1}^l c^i_j \alpha_j$, then
$\alpha_i=\alpha_i\circ\sigma\circ\sigma=
\sum_{j,k=1}^l c^i_j  c^j_k\alpha_k$, where the $c^i_j$ are either
all nonnegative integers or all nonpositive integers.  It follows that
\begin{equation}\label{autonu}
\alpha_i\circ\sigma = \alpha_{\nu(i)}
\end{equation}
for some permutation $\nu$ of $\{1,2,\dots,l\}$.  

\begin{pro}\label{sigma}  We have 
$\sigma( e_{\alpha_i} )= - e_{\alpha_{\nu(i)}}$,
$\sigma( e_{-\alpha_i} )= - e_{-\alpha_{\nu(i)}}$ 
for $i=1,2,\dots,l$ and also
$\sigma( e_{\psi} )=-e_{\psi}, \sigma( e_{-\psi} )=-e_{-\psi}$.
\end{pro}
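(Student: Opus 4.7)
The plan is to determine the unknown constants $t_\alpha$ in the relation $\sigma(e_\alpha) = t_\alpha\,e_{\alpha\circ\sigma}$ (already established in the paragraph preceding the statement) for the specific roots $\alpha=\pm\alpha_i$ and $\alpha=\pm\psi$.  The key inputs I will use are: $\sigma(u_1)=-u_1$ with $u_1=e_0=\sum a_i\,e_{\alpha_i}$; $\sigma(f_0)=-f_0$ with $f_0=\sum(r_i/a_i)\,e_{-\alpha_i}$; $\sigma(u_l)=-u_l$ with $u_l=e_\psi$; and the fact (stated in the excerpt) that $\sigma(\fh)=\fh$.

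First I would pin down how $\sigma$ acts on $\fh$.  Because $\sigma$ is an involutive Lie-algebra automorphism, it preserves the Killing form, hence the positive multiple $B$.  Writing $\sigma(H_\alpha)=H_{\alpha'}$ for some functional $\alpha'$ on $\fh$, the identity $B(\sigma(h),\sigma(H_\alpha))=B(h,H_\alpha)=\alpha(h)$ forces $\alpha'\circ\sigma=\alpha$, i.e.\ $\sigma(H_\alpha)=H_{\alpha\circ\sigma}$; in particular $\sigma(H_{\alpha_i})=H_{\alpha_{\nu(i)}}$.  Next, from $x_0=[e_0,f_0]$ and $\sigma(e_0)=-e_0$, $\sigma(f_0)=-f_0$ one gets $\sigma(x_0)=x_0$.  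Expanding $x_0=\sum_i r_i H_{\alpha_i}$ and applying $\sigma$ yields $\sum_i r_i H_{\alpha_{\nu(i)}}=\sum_j r_j H_{\alpha_j}$, so that $r_{\nu(i)}=r_i$ for all $i$.  Since $a_i=\sqrt{r_i}$, this also gives $a_{\nu(i)}=a_i$.

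With these facts in hand, the remaining identifications are essentially matching coefficients.  Applying $\sigma$ to $e_0=\sum_i a_i\,e_{\alpha_i}$ gives
\[
-\sum_i a_i\,e_{\alpha_i}=\sigma(e_0)=\sum_i a_i\,t_{\alpha_i}\,e_{\alpha_{\nu(i)}}
=\sum_j a_{\nu^{-1}(j)}\,t_{\alpha_{\nu^{-1}(j)}}\,e_{\alpha_j},
\]
and linear independence of the $e_{\alpha_j}$ together with $a_{\nu(i)}=a_i$ forces $t_{\alpha_i}=-1$.  The same computation applied to $f_0=\sum_i a_i\,e_{-\alpha_i}$ (using $r_i/a_i=a_i$) and $\sigma(f_0)=-f_0$ gives $t_{-\alpha_i}=-1$.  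For the highest root, $\sigma(e_\psi)=-e_\psi$ is immediate from $u_l=e_\psi$.  Finally, to handle $e_{-\psi}$, I would note that $\sigma$ permutes positive roots (it sends simple roots to simple roots via $\nu$), hence fixes the highest root, so $\sigma(e_{-\psi})=t_{-\psi}\,e_{-\psi}$.  Applying $\sigma$ to $[e_\psi,e_{-\psi}]=H_\psi$ and using $\sigma(H_\psi)=H_{\psi\circ\sigma}=H_\psi$ together with $\sigma(e_\psi)=-e_\psi$ gives $-t_{-\psi}H_\psi=H_\psi$, so $t_{-\psi}=-1$.

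The only place requiring genuine input, as opposed to symbol pushing, is the step deducing $r_{\nu(i)}=r_i$ from $\sigma(x_0)=x_0$; everything else is either linear algebra on the span of the $e_{\pm\alpha_i}$ or a one-line consequence of $[e_\psi,e_{-\psi}]=H_\psi$.  I expect this to be the main obstacle to write cleanly, and it is also the conceptual heart of the proposition: the diagram automorphism $\nu$ induced by $\sigma$ must preserve the coefficients expressing $x_0$ (equivalently $\sum\epsilon_i$) in the basis $\{H_{\alpha_i}\}$, which is exactly the compatibility that lets the normalization $a_i=\sqrt{r_i}$ propagate through $\sigma$.
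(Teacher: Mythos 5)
Your proof is correct, and while the central step coincides with the paper's, the surrounding arguments take a genuinely different (and more self-contained) route. For $\sigma(e_{\alpha_i})=-e_{\alpha_{\nu(i)}}$ both arguments are the same: compare coefficients in $\sigma(u_1)=-u_1$ and cancel the $\sqrt{r_i}$ using $r_{\nu(i)}=r_i$. The difference is that the paper simply cites $r_i=r_{\nu(i)}$ as known (pointing to Remark \ref{outer} and the classification of diagram automorphisms), whereas you derive it on the spot: invariance of the Killing form under the automorphism $\sigma$ gives $\sigma(H_\alpha)=H_{\alpha\circ\sigma}$, the relation $x_0=[e_0,f_0]$ together with $\sigma(e_0)=-e_0$, $\sigma(f_0)=-f_0$ gives $\sigma(x_0)=x_0$, and comparing coefficients of $x_0=\sum_i r_iH_{\alpha_i}$ yields $r_{\nu(i)}=r_i$. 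This fills in a step the paper leaves to a citation. For the negative roots the two proofs diverge: the paper applies $\rho$ and uses $\sigma\rho=\rho\sigma$ together with $\rho(e_\alpha)=-e_{-\alpha}$, a two-line deduction; you instead rerun the coefficient comparison on $\sigma(f_0)=-f_0$ (using $r_i/a_i=a_i$) to get $\sigma(e_{-\alpha_i})=-e_{-\alpha_{\nu(i)}}$, and extract $\sigma(e_{-\psi})=-e_{-\psi}$ from $[e_\psi,e_{-\psi}]=H_\psi$ and $\sigma(H_\psi)=H_\psi$. Both are valid; the paper's route is shorter, while yours never invokes the compact-form involution $\rho$ and uses only the defining data $(u_1,\dots,u_l,f_0)$ of $\sigma$, which makes the logical dependencies cleaner.
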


\begin{proof}  From $\sigma(u_1)=-u_1$, where $u_1=\sum_{i=1}^l \sqrt{r_i} e_{\alpha_i} $, we have
\[
-\sum_{i=1}^l \sqrt{r_i} e_{\alpha_i} = \sum_{i=1}^l \sqrt{r_i} \sigma(e_{\alpha_i})
= \sum_{i=1}^l \sqrt{r_i} t_{\alpha_i} e_{\alpha_i\circ \sigma}= \sum_{i=1}^l \sqrt{r_i} t_{\alpha_i} e_{\alpha_{\nu(i)}}.
\]
It is known that ${r_i}=
{r_{\nu(i)}}$ (cf.\ Remark \ref{outer}), so we deduce that $t_{\alpha_i}=-1$ for all $i$. This gives the first statement $\sigma( e_{\alpha_i} )= - e_{\alpha_{\nu(i)}}$. The statement $\sigma( e_{\psi} )=-e_{\psi}$ holds by definition of $\sigma$, since $e_{\psi}=u_l$.  The two remaining statements follow immediately from these by applying $\rho$ and using the fact that $\sigma$ commutes with $\rho$.
\end{proof}

\begin{rem}\label{outer} If 
$\fg\ne \fa_l,\fd_{2m+1},\fe_6$ then the above formula
$\sigma((\ad\,f_0)^{m_i} u_i)=(-1)^{m_i+1} (\ad\,f_0)^{m_i} u_i$
shows that $\sigma$ is the identity map on $\fh$, because for such $\fg$ all exponents $m_i$ are known to be odd.
In this case, $\nu$ is the identity permutation.  If 
$\fg= \fa_l,\fd_{2m+1},\fe_6$, it is known (see \cite{Hi92}, Remark 6.1) 
that $\nu$ is induced by an outer automorphism of $\fg$ corresponding to a symmetry of the Dynkin diagram. 
\end{rem}

The connection form $\alpha$ has the following symmetries.

\begin{pro}\label{alphasymmetries}
Let $w:\domainset\to \fh_\sharp$ satisfy the condition 
\begin{equation}\label{antisymmetry}
\sigma(w)=w.
\end{equation}
Choose complex numbers $c^{-}_0,\dots,c^{-}_l$, $c^{+}_0,\dots,c^{+}_l$ such that
\begin{equation}\label{balancing}
c^{-}_i=c^{-}_{\nu(i)}, c^{+}_i=c^{+}_{\nu(i)}, \newbar{c^{-}_i}=c^{+}_{\nu(i)}
\end{equation}
for all $i$, where $\nu(i)$ is defined in (\ref{autonu}) for $i=1,2,\dots,l$ and we put $\nu(0)=0$. Then the connection form $\alpha$ of Definition \ref{alphadefinition} satisfies

(i) $\tau(\alpha(\lambda))=\alpha(e^{2\pi\i/s}\lambda)$

(ii) $\sigma(\alpha(\lambda))=\alpha(-\lambda)$

(iii) $\chi(\alpha^\prime(\lambda))= \alpha^{\prime\prime}(1/\bar\lambda)$

\noindent for all $\lambda\in\mathbb C^\ast$.
\end{pro}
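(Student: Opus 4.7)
The plan is to verify each of (i), (ii), (iii) by decomposing $\alpha'$ and $\alpha''$ into the abelian piece ($w_z$ or $w_{\bar z}$) and the root-vector piece ($\tfrac{1}{\lambda}\tilde E_-$ or $\lambda\tilde E_+$), then tracking how the relevant involution acts on each. The key leverage throughout is that $\tau$, $\sigma$, $\chi$ are Lie algebra (anti)automorphisms, so they are $\Ad$-equivariant: e.g.\ $\tau(\Ad(e^w)\xi)=\Ad(e^{\tau w})\tau\xi$, and similarly for $\sigma$, $\chi$. Combined with the facts $\tau(w)=w$, $\sigma(w)=w$, $\chi(w)=-w$ (verified below), this will reduce each claim to an identity about $E_\pm$.

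For (i), since $\tau=\Ad P_0$ with $x_0\in\fh$, we have $\tau|_\fh=\mathrm{id}$, so $\tau$ fixes $w_z,w_{\bar z}$. The formula $\tau(e_\alpha)=e^{2\pi\i\,\mathrm{ht}(\alpha)/s}e_\alpha$ together with $\mathrm{ht}(\alpha_0)=\mathrm{ht}(-\psi)=1-s\equiv 1\pmod s$ gives $\tau(E_\pm)=e^{\pm 2\pi\i/s}E_\pm$ uniformly for $i=0,\dots,l$, and hence $\tau(\tilde E_\pm)=e^{\pm 2\pi\i/s}\tilde E_\pm$ by $\Ad$-equivariance. Inserting this into the definition of $\alpha(\lambda)$ reproduces the substitution $\lambda\mapsto e^{2\pi\i/s}\lambda$.

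For (ii), Proposition \ref{sigma} (extended to the index $i=0$ via $\alpha_0=-\psi$ and $\nu(0)=0$) gives $\sigma(e_{\pm\alpha_i})=-e_{\pm\alpha_{\nu(i)}}$. The balancing $c^{\pm}_i=c^{\pm}_{\nu(i)}$ then yields $\sigma(E_\pm)=-E_\pm$, and $\Ad$-equivariance together with $\sigma(w)=w$ promotes this to $\sigma(\tilde E_\pm)=-\tilde E_\pm$. Since $\sigma$ is $\mathbb{C}$-linear and commutes with $\partial_z,\partial_{\bar z}$, it also fixes $w_z$ and $w_{\bar z}$. The resulting signs on the root-vector pieces match the effect of $\lambda\mapsto -\lambda$.

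For (iii), since $\chi=\sigma\rho$ is conjugate-linear, the identity $\rho(e_{-\alpha_i})=-e_{\alpha_i}$ combined with Proposition \ref{sigma} yields $\chi(e_{-\alpha_i})=e_{\alpha_{\nu(i)}}$; the third balancing condition $\overline{c^{-}_i}=c^{+}_{\nu(i)}$ then gives $\chi(E_-)=E_+$. For the $\fh$-piece, writing $w=\sum t_i H_{\alpha_i}$ with real $t_i$, conjugate-linearity of $\rho$ together with $\rho(H_{\alpha_i})=-H_{\alpha_i}$ gives $\rho(w_z)=-w_{\bar z}$, and $\sigma(w)=w$ transports this to $\chi(w_z)=-w_{\bar z}$. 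The same hypothesis yields $\chi(w)=-w$, so $\Ad$-equivariance produces $\chi(\tilde E_-)=\Ad(e^{-w})E_+=\tilde E_+$. Finally the conjugate-linear factor $\overline{1/\lambda}=1/\bar\lambda$ delivers precisely $\alpha''(1/\bar\lambda)$. The only delicate bookkeeping is around the index $i=0$ and the permutation $\nu$; otherwise each step is a mechanical consequence of $\Ad$-equivariance and the explicit action of $\tau,\sigma,\chi$ on simple root vectors.
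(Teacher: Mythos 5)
Your proposal is correct and follows essentially the same route as the paper's proof: establish how each involution commutes with $\Ad(e^{\pm w})$ using $\tau(w)=\sigma(w)=w$ and $\rho(w)=-w$, compute the action on $E_\pm$ from Proposition \ref{sigma} and the balancing conditions, and read off the effect on $\lambda$. Your treatment is somewhat more explicit than the paper's (e.g.\ the height computation for $\alpha_0$ and the identity $\rho(w_z)=-w_{\bar z}$), but the underlying argument is identical.
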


\begin{proof} (i) This follows from the fact that $\Ad e^w$ commutes with $\tau=\Ad P_0$, and $\tau(e_{\alpha_i})=e^{2\pi\i/s}\, e_{\alpha_i}$, 
$\tau(e_{-\alpha_i})=e^{-2\pi\i/s} e_{-\alpha_i}$.
(ii) Since $\Ad e^w=e^{\ad w}$, and by (\ref{antisymmetry}) $\sigma\circ\ad w=
\ad w\circ\sigma$, we have $\sigma\circ\Ad e^w = \Ad e^w \circ\sigma$.
By Proposition \ref{sigma} and (\ref{balancing}), $\sigma(E_-)=-E_-$ and
$\sigma(E_+)=-E_+$.  Combining these, we obtain (ii).
(iii) Since $\Ad e^w=e^{\ad w}$ and $\rho(w)=-w$, 
where $\rho$ is as in Definition \ref{compact},
we have $\rho\circ\Ad e^w = \Ad e^{-w} \circ\rho$.  Hence
$\chi\circ\Ad e^w = \Ad e^{-w} \circ\chi$.  
Using Proposition \ref{sigma} and (\ref{balancing}), $\chi(E_-)=E_+$ and
$\chi(E_+)=E_-$.  Combining these, we obtain (iii).
\end{proof}

Condition (i) is the fundamental \lq\lq cyclic\rq\rq\ property of the Toda equations. 
The condition $\sigma(w)=w$ is needed here mainly to establish (iii), i.e.\  compatibility with the real form involution $\chi$.  However, compatibility with $\sigma$ is a property of independent interest in tt* geometry, which might be called the \lq\lq Frobenius condition\rq\rq. For this reason we state (ii) as a separate observation.

\subsection{The connection form $\hat\alpha$}

The final piece in the construction of the tt*-Toda equations is the similarity condition, together with an associated connection form $\hat\alpha$.  

To motivate this, we begin with an informal discussion, following 
\cite{Du93},\cite{CV91}.
The similarity condition may be stated as
\begin{equation}\label{homogeneity}
\left(
\lambda\tfrac{\partial}{\partial \lambda}
+z\tfrac{\partial}{\partial z}
-\bar z\tfrac{\partial}{\partial \bar z}
\right)
F=0
\end{equation}
for a suitable $G$-valued function $F(z,\bar{z},\lambda)$ such that $\alpha=F^{-1}\frac{\partial F}{\partial z}dz+F^{-1}\frac{\partial F}{\partial \bar{z}}d\bar{z}$ (cf.\ 
\cite{Du93}, formula (2.19)).  Here we are using classical matrix notation for $F$. From the explicit form of $\alpha$, it follows that
$(z\tfrac{\partial}{\partial z}
-\bar z\tfrac{\partial}{\partial \bar z})w=0$, which means that $w$ depends only on the radial variable
\[
x=\vert z\vert.
\]  
Since $2w_{z\bar z} = \tfrac12(w_{xx}+\tfrac1x w_x)$, 
the natural (maximal) domain of a solution is 
$\mathbb C^\ast=\mathbb C-\{0\}$.  For physical reasons (completeness of the renormalization group flow) smoothness on this domain will be a requirement. From this discussion, and bearing in mind also Proposition \ref{alphasymmetries}, we arrive at the following definition:

\begin{df}\label{tt*-Toda} {\bf (The tt*-Toda equations)}
Let $\fg$ be a complex simple Lie algebra.
Let $\fg_{\mathbb R}$ be the split real form of $\fg$ defined by $\chi=\rho\sigma$. The tt*-Toda equations (for $\fg$) are the Toda equations which are 

{\em (R)} real with respect to $\fg_{\mathbb R}$  (see Definition \ref{realToda}) 

\noindent where $w:\mathbb C^\ast\to \fh_\sharp$ satisfies the additional conditions

{\em (F)} $\sigma(w)=w$

{\em (S)} $w=w(\vert z\vert)$.
\end{df}

More prosaically, 
the tt*-Toda equations are the Toda equations 
\begin{equation}\label{ttt}
2w_{z \bar z} = -\sum_{i=0}^l k_i e^{ -2\alpha_i(w)} H_{\alpha_i}
\end{equation}
for $w:\mathbb C^\ast\to \fh_\sharp$ such that
$\sigma(w)=w$ and $w=w(\vert z\vert)$,
with $k_i=k_{\nu(i)}$, $k_i>0$ for all $i$.  But this would be an unhelpful definition as it obscures the Lie-theoretic origins.

The Frobenius condition $\sigma(w)=w$ is nontrivial only for $G$ of type $A_n$, $D_{2n+1}$, $E_6$.  Whether this is a desirable feature (for physical reasons), or a defect (which might be remedied by choosing a different real form), we leave for the reader's consideration.

\begin{ex}\label{An-2} (Example \ref{An-1} continued)
{\em
The case $\fg=\fsl_{n+1}\mathbb C$. Let us see how the split real form of Example \ref{An-1} and the equations (\ref{Ce-Va-Toda}) arise from Definition \ref{tt*-Toda}.

$x_0=\diag(\tfrac n2,\tfrac n2-1,\dots,-\tfrac n2)
=\sum_{i=1}^n \tfrac12 i(n-i+1) H_{\alpha_i}$

$P_0=\diag(e^{n\pi\i/s},e^{(n-2)\pi\i/s},\dots,e^{-n\pi\i/s})$

$u_1=\sum_{i=0}^{n-1} a_{i+1} E_{i,i+1}$ with $a_i=a_{n-i+1}$ and $a_i\ne 0$ for all $i$; then $u_2,\dots,u_n$ are determined up to scalar multiples by the conditions $[u_i,u_j]=0$ (the nonzero entries of $u_i$ are on the $i$-th superdiagonal)

$\sigma(X)=-\Delta X^t \Delta$ and $\nu(i)=n-i+1$ ($1\le i\le n$), where $\Delta$ denotes the matrix of (\ref{delta})

$\chi(X)=\Delta \bar X \Delta$

\noindent The condition $\sigma(w)=w$ is $w_i+w_{n-i}=0$ for $0\le i\le n$. 
Let us take $c^{-}_i=c^{-}_{n-i+1}$, $c^{+}_i=c^{+}_{n-i+1}$, 
 $\newbar{c^{-}_i}=c^{+}_{n-i+1}$ for $1\le i\le n$ and
 also $\newbar{c^{-}_0}=c^{+}_{0}$  in the definition of $\alpha$. Then we obtain the tt*-Toda equations
\[
2w_{z \bar z} = -\sum_{i=0}^l k_i e^{ -2\alpha_i(w)} H_{\alpha_i},
\]
where $k_i=\vert c^{-}_i\vert^2>0$ and $k_i=k_{n-i+1}$.
When all $k_i=1$ we obtain (\ref{Ce-Va-Toda}).
\qed
}
\end{ex}

To motivate the definition of $\hat\alpha$, we continue the informal discussion from above.
The similarity condition (\ref{homogeneity}) leads to a new connection form
\begin{align*}
\hat\alpha&= F^{-1}\tfrac{\partial F}{\partial \lambda} d\lambda
\\
&=F^{-1}\left(   
-z\tfrac{\partial F}{\partial z}
+\bar z\tfrac{\partial F}{\partial \bar z}
\right) \tfrac{d\lambda}{\lambda}
\\
&=(-z \alpha^\prime + \bar z \alpha^{\prime\prime}) \tfrac{d\lambda}{\lambda}
\\
&=
\left(
-\tfrac{z}{\lambda}\tilde E_- - xw_x + \lambda\bar z \tilde E_+
\right)\tfrac{d\lambda}{\lambda}
\end{align*}
where we have used $zw_z+\bar zw_{\bar z} = xw_x$ at the last step.
By construction we have 
$\alpha+\hat\alpha=F^{-1}\frac{\partial F}{\partial z}dz+F^{-1}\frac{\partial F}{\partial \bar{z}}d\bar{z}+F^{-1}\frac{\partial F}{\partial \lambda}d\lambda$.

As pointed out by Cecotti-Vafa and Dubrovin,
the meromorphic $\fg$-valued $1$-form $\hat\alpha$ on $\mathbb C\sqcup\infty$ is fundamental in the study of the tt*-Toda equations. 
It has poles of order $2$ at zero and infinity. 
Assuming the existence of $F$, the combined connection $d+\alpha+\hat\alpha$ is flat, and then a standard argument (see chapter 1 of \cite{FIKN06}) shows that the monodromy data of $\hat\alpha$, in particular the Stokes matrices, at these poles does not depend on $x$. 

Putting $\mu=\lambda x/z$,
we obtain a more symmetrical form, which we take as the formal definition of $\hat\alpha$:

\begin{df}\label{hatalpha}{\bf (The meromorphic connection)} 
$\hat\alpha=
\left(
-\tfrac{x}{\mu}\tilde E_- - xw_x + \mu x \tilde E_+
\right)\tfrac{d\mu}{\mu}$,
where $x=\vert z\vert$ and $\mu=\lambda x/z$.
\end{df}

 In section \ref{local} we shall establish the existence of $F$ exists for a particular class of solutions (Proposition \ref{gLRG}).  For the time being we note that Definition \ref{hatalpha} makes sense without reference to $F$, and that the isomonodromy property can also be established without using $F$, as follows.

\begin{lm}\label{zcc2} 
Let $\hat\alpha$ be as in Definition \ref{hatalpha}, and let $\alpha_r$ be the  $\fg$-valued $1$-form on $\mathbb R_{>0}$ defined by
$
\alpha_r = 
\left(
\tfrac{1}{\mu}\tilde E_-  + \mu \tilde E_+ \right)dx.
$
Then: the connection $d+\hat\alpha + \alpha_r$ is flat if and only if $w$ satisfies
the equation $\tfrac12(w_{xx}+\tfrac1x w_x)=[ \tilde E_-,\tilde E_+ ]$.
\end{lm}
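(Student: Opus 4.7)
The plan is to directly compute the curvature of $d+\hat\alpha+\alpha_r$ on the $(x,\mu)$-plane and match terms. Write $A = \hat\alpha + \alpha_r = A_x\,dx + A_\mu\,d\mu$, where
\[
A_x = \tfrac{1}{\mu}\tilde E_- + \mu \tilde E_+, \qquad
A_\mu = -\tfrac{x}{\mu^2}\tilde E_- - \tfrac{xw_x}{\mu} + x\tilde E_+.
\]
The flatness of $d+A$ is equivalent to the single scalar equation $\partial_x A_\mu - \partial_\mu A_x + [A_x, A_\mu] = 0$, viewed as a $\fg$-valued function of $(x,\mu)$.

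First I would compute the two partial derivatives. Since $w = w(x)$ only, we have $\partial_x \tilde E_- = \ad(w_x)\tilde E_-$ and $\partial_x \tilde E_+ = -\ad(w_x)\tilde E_+$ (from $\tilde E_\pm = \Ad(e^{\mp w})E_\pm$), so
\[
\partial_x A_\mu - \partial_\mu A_x = -\tfrac{x}{\mu^2}\ad(w_x)\tilde E_- \;-\; \tfrac{w_x + xw_{xx}}{\mu} \;-\; x\ad(w_x)\tilde E_+,
\]
after the $-\tfrac{1}{\mu^2}\tilde E_-$ and $\tilde E_+$ pieces cancel.

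Next I would expand the bracket $[A_x,A_\mu]$ term by term. The diagonal contributions $[\tilde E_-,\tilde E_-]$ and $[\tilde E_+,\tilde E_+]$ vanish. The cross terms in $\tilde E_\pm$ produce $\tfrac{x}{\mu^2}\ad(w_x)\tilde E_- + x\ad(w_x)\tilde E_+$, which precisely cancel the $\ad(w_x)$ contributions from $\partial_x A_\mu - \partial_\mu A_x$. The remaining pieces $[\tfrac{1}{\mu}\tilde E_-, x\tilde E_+]$ and $[\mu\tilde E_+, -\tfrac{x}{\mu^2}\tilde E_-]$ both contribute $\tfrac{x}{\mu}[\tilde E_-,\tilde E_+]$, giving a total of $\tfrac{2x}{\mu}[\tilde E_-,\tilde E_+]$.

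Adding everything, the curvature reduces to
\[
\tfrac{1}{\mu}\Bigl(\,2x\,[\tilde E_-,\tilde E_+] - w_x - xw_{xx}\,\Bigr),
\]
so flatness is equivalent to $xw_{xx} + w_x = 2x\,[\tilde E_-,\tilde E_+]$, i.e.\ $\tfrac12(w_{xx} + \tfrac1x w_x) = [\tilde E_-,\tilde E_+]$, as required. There is no genuine obstacle here: the only point that needs care is the bookkeeping of the $\ad(w_x)$ terms, whose cancellation between $\partial_x A_\mu$ and $[A_x,A_\mu]$ is exactly what reduces a two-variable flatness system to the single radial Toda-type equation. This mirrors the passage from zero-curvature in $(z,\bar z)$ to Proposition \ref{zcc}, now written in the radial variable $x$ with the spectral variable $\mu$ playing the role previously played by $\lambda$.
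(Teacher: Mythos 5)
Your computation is correct: the paper's proof is simply the statement ``Elementary calculation,'' and you have carried out exactly that calculation, with the curvature $\partial_x A_\mu - \partial_\mu A_x + [A_x,A_\mu]$ correctly reducing to $\tfrac{1}{\mu}\bigl(2x[\tilde E_-,\tilde E_+] - w_x - xw_{xx}\bigr)$ after the $\ad(w_x)$ terms cancel. This is the same approach as the paper, just written out in full.
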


\begin{proof} Elementary calculation.
\end{proof}

Thus a solution of the tt*-Toda equations always gives a family (depending on $x\in \mathbb R_{>0}$) of {\em isomonodromic deformations} of $\hat\alpha$.  Evidently this property holds even if the solution is defined only on some open subset of $\mathbb R_{>0}$.

\begin{pro}\label{hatalphasymmetries}
Under the same conditions as Proposition \ref{alphasymmetries}, the connection form $\hat\alpha$ of Definition \ref{hatalpha} satisfies

(i) $\tau(\hat\alpha(\mu))=\hat\alpha(e^{2\pi\i/s}\mu)$

(ii) $\sigma(\hat\alpha(\mu))=\hat\alpha(-\mu)$

(iii) $\chi(\hat\alpha(\mu))= \hat\alpha(1/\bar\mu)$ 

(iv) $\theta(\hat\alpha(\mu))=\hat\alpha(\bar\mu)$ 
(when the $c^{\pm}_i$ are real)

\noindent for all $\mu\in\mathbb C^\ast$.
\end{pro}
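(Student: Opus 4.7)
The plan is to mirror the proof of Proposition~\ref{alphasymmetries}: each of (i)--(iv) reduces to computing how the given involution acts on the three summands of $\hat\alpha$---the coefficient of $1/\mu$, the constant term $-xw_x$, and the coefficient of $\mu$---and then comparing the result with $\hat\alpha$ evaluated at the shifted value of $\mu$.

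First I would record the ingredients already assembled in the proof of Proposition~\ref{alphasymmetries}. Since $w\in\fh_\sharp$ and $\tau=\Ad P_0$ with $P_0\in\exp\fh$, the operator $\Ad e^w$ commutes with $\tau,\sigma,\theta$, while $\chi\circ\Ad e^w=\Ad e^{-w}\circ\chi$. The formula $\tau(e_\alpha)=e^{2\pi\i\,\text{ht}(\alpha)/s}e_\alpha$ gives $\tau(E_\pm)=e^{\pm 2\pi\i/s}E_\pm$, since $-\alpha_0=\psi$ has height $s-1\equiv-1\pmod s$. Proposition~\ref{sigma} together with the balancing conditions \eqref{balancing} yields $\sigma(E_\pm)=-E_\pm$ and $\chi(E_\pm)=E_\mp$, while $\theta(E_\pm)=E_\pm$ when the $c^\pm_i$ are real, because then $E_\pm\in\fg_{\text{split}}$. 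These transfer verbatim to $\tilde E_\pm=\Ad(e^{\mp w})E_\pm$, with $\chi$ swapping $\tilde E_-\leftrightarrow\tilde E_+$. Finally $w_x$ is fixed by $\tau,\sigma,\theta$ and sent to $-w_x$ by $\chi$.

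Parts (i), (ii), (iv) then drop out directly. For (i), applying $\tau$ to $\hat\alpha(\mu)$ produces $(-\tfrac{x}{\mu}e^{-2\pi\i/s}\tilde E_- - xw_x + \mu x\,e^{2\pi\i/s}\tilde E_+)\tfrac{d\mu}{\mu}$, which is $\hat\alpha(e^{2\pi\i/s}\mu)$. For (ii), $\sigma$ flips the signs of the two outer summands, matching the effect of $\mu\mapsto-\mu$ on the coefficients $1/\mu$ and $\mu$, while $d\mu/\mu$ is invariant. For (iv), $\theta$ is conjugate-linear and fixes every Lie-algebra factor, so $\theta(\hat\alpha(\mu))$ is obtained from $\hat\alpha(\mu)$ by replacing each scalar by its complex conjugate, which is literally $\hat\alpha(\bar\mu)$.

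The only place where care is required is (iii), which is also the main obstacle in the bookkeeping. Conjugate-linearity of $\chi$ together with $\chi(\tilde E_\pm)=\tilde E_\mp$ and $\chi(w_x)=-w_x$ gives
\[
\chi(\hat\alpha(\mu))=\left(-\tfrac{x}{\bar\mu}\tilde E_+ + xw_x + \bar\mu x\,\tilde E_-\right)\tfrac{d\bar\mu}{\bar\mu}.
\]
On the other hand $d(1/\bar\mu)/(1/\bar\mu)=-d\bar\mu/\bar\mu$, so substituting $\mu\mapsto 1/\bar\mu$ into $\hat\alpha$ and distributing the outer $-d\bar\mu/\bar\mu$ produces the same expression: the sign introduced by the differential exactly accounts for both the sign coming from $\chi(w_x)=-w_x$ on the central term and the swap of the $1/\mu$ and $\mu$ slots. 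Tracking these three sign changes simultaneously is the only step where I would slow down; everything else is a line-by-line transcription of the argument for Proposition~\ref{alphasymmetries}.
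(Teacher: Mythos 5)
Your proposal is correct and follows essentially the same route as the paper: the paper's proof simply declares (i)--(iii) ``similar to'' Proposition \ref{alphasymmetries} and writes out only (iv), using exactly the reductions you list ($\tau(\tilde E_\pm)=e^{\pm 2\pi\i/s}\tilde E_\pm$, $\sigma(\tilde E_\pm)=-\tilde E_\pm$, $\chi(\tilde E_\pm)=\tilde E_\mp$ together with $\chi(w_x)=-w_x$, and the sign from $d(1/\bar\mu)/(1/\bar\mu)=-d\bar\mu/\bar\mu$ in (iii)). One small point in your favour: your statement $\theta(E_\pm)=E_\pm$ for real $c^\pm_i$ is the correct one, since $\theta$ preserves each root space; the paper's printed proof of (iv) asserts $\theta(E_-)=E_+$ and $\theta(E_+)=E_-$, which appears to be a typo.
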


\begin{proof}  As (i),(ii),(iii) are similar to the corresponding proofs in Proposition \ref{alphasymmetries}, we just give the proof of (iv).
Since $\theta\vert_{\fh_\sharp}$ is the identity, we have $\theta\circ\ad w=
\ad w\circ\theta$, hence $\theta\circ\Ad e^w = \Ad e^w \circ\theta$.
By definition of $\theta$ (and the conditions on $c^{-}_i,c^{+}_i$) we have $\theta(E_-)=E_+$ and
$\theta(E_+)=E_-$.  Combining these, we obtain (iv).
\end{proof}

These symmetries of the connection form $\hat\alpha$ will play an important role from now on, as they restrict severely its monodromy data.

\section{Definition of the Stokes data}\label{data}

In this section we shall investigate the Stokes data, using the Lie-theoretic approach of Boalch \cite{B_imrn}.  To facilitate comparison with that paper, we shall use the connection $d-\hb$, where
\begin{equation}\label{betahatmu}
\hb=
\left(
-\tfrac{x}{\mu}\tilde E_+ - xw_x + \mu x \tilde E_-
\right)\tfrac{d\mu}{\mu},
\end{equation}
instead of $d+\hat\alpha$.  This gives the same tt*-Toda equations because of the following version of Lemma \ref{zcc2}, whose proof is equally elementary:

\begin{lm}\label{zcc3}
Let $\hb$ be as in (\ref{betahatmu}), and let $\beta_r$ be the  $\fg$-valued $1$-form on $\mathbb R_{>0}$ defined by
\[
\beta_r = 
\left(
\tfrac{1}{\mu}\tilde E_+  + \mu \tilde E_- \right)dx
\]
Then: the connection $d-\hb - \beta_r$ is flat if and only if $w$ satisfies
the equation $\tfrac12(w_{xx}+\tfrac1x w_x)=[ \tilde E_-,\tilde E_+ ]$.
\end{lm}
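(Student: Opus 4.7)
The plan is to compute the curvature of $d - \hat\beta - \beta_r$ directly, in the style of Lemma \ref{zcc2}, and collect terms by powers of $\mu$. Writing $\omega = -\hat\beta - \beta_r = P\,d\mu + Q\,dx$ with
\[
P = \tfrac{x}{\mu^2}\tilde E_+ + \tfrac{xw_x}{\mu} - x\tilde E_-, \qquad Q = -\tfrac{1}{\mu}\tilde E_+ - \mu\tilde E_-,
\]
the vanishing of $d\omega + \omega\wedge\omega$ reduces to the vanishing of the single $\fg$-valued coefficient $\partial_\mu Q - \partial_x P + [P,Q]$ of $d\mu\wedge dx$.

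The key input is the pair of identities $\partial_x\tilde E_- = [w_x, \tilde E_-]$ and $\partial_x\tilde E_+ = -[w_x, \tilde E_+]$, which follow from $\tilde E_\pm = \Ad(e^{\mp w})E_\pm$ together with the fact that $w_x$ and $w$ commute in $\fh$, so $\partial_x$ passes through $\Ad(e^{\mp w})$. I would use these to differentiate $P$ and then organize the resulting terms by powers of $\mu$.

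After expansion, the symmetry of the setup forces most terms to cancel: at orders $\mu^{\pm 2}$ the pure $\tilde E_\pm$ pieces in $\partial_\mu Q$ and $\partial_x P$ cancel; the $[w_x,\tilde E_+]$ contributions at order $\mu^{-2}$ from $\partial_x P$ and from $[P,Q]$ cancel; the $[w_x,\tilde E_-]$ contributions at order $\mu^0$ likewise cancel; and the self-brackets $[\tilde E_\pm,\tilde E_\pm]$ in $[P,Q]$ are zero. The only surviving terms live at order $\mu^{-1}$, where combining the $-(w_x + xw_{xx})/\mu$ piece of $-\partial_x P$ with the $-(2x/\mu)[\tilde E_+,\tilde E_-]$ piece of $[P,Q]$ yields a scalar multiple of $\tfrac12(w_{xx} + \tfrac{1}{x}w_x) - [\tilde E_-, \tilde E_+]$, which would establish the claimed equivalence.

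The only real hazard is sign-tracking: in passing from $+\hat\alpha+\alpha_r$ (Lemma \ref{zcc2}) to $-\hat\beta - \beta_r$ with the roles of $\tilde E_-$ and $\tilde E_+$ interchanged, one must verify that the overall minus sign together with the swap combine to leave the bracket $[\tilde E_-, \tilde E_+]$ (rather than its negative) on the right-hand side of the radial equation. This is exactly what the bookkeeping above checks, and there is no further conceptual obstacle.
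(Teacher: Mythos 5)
Your direct curvature computation is correct and is exactly what the paper intends: its proof of this lemma is simply the remark that it is \lq\lq equally elementary\rq\rq\ to Lemma \ref{zcc2}, and your identities $\partial_x\tilde E_\pm=\mp[w_x,\tilde E_\pm]$ together with the term-by-term cancellations do reproduce it, the full $d\mu\wedge dx$ coefficient collapsing to $-\tfrac{2x}{\mu}\bigl(\tfrac12(w_{xx}+\tfrac1x w_x)-[\tilde E_-,\tilde E_+]\bigr)$ as claimed. The only slip is cosmetic: the pure $\tilde E_+$ and $\tilde E_-$ cancellations between $\partial_\mu Q$ and $\partial_x P$ occur at orders $\mu^{-2}$ and $\mu^{0}$ respectively, not at orders $\mu^{\pm 2}$.
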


Using $\hb$ instead of $\hat\alpha$ corresponds to using the connection form 
\[
\bb=(w_z + \tfrac1\lambda \tilde E_+)dz+
(-w_{\bar z} + \lambda \tilde E_-)d\bar z
\]
instead of $\alpha$.  We have
$d\bb-\bb\wedge \bb=0$ (i.e.\ $d-\bb$ is flat) if and only if
$2w_{z\bar z} = [ \tilde E_-,\tilde E_+ ]$.  Thus, as far as the tt*-Toda equations are concerned, it does not matter which choice is made.  We used $\alpha$ in the previous section for compatibility with the cited literature on the Toda equations. 

\begin{rem}
Although the connection $d+\hat\alpha$ is used in 
\cite{GIL1},\cite{GIL2},\cite{GIL3}, for $G=Sl_{n+1}\mathbb C$, the Stokes analysis was actually performed there for the dual connection $d-\hat\alpha^t$.  When $G=Sl_{n+1}\mathbb C$ and $c^{-}_i=c^{+}_i=1$ we have $\hb=\hat\alpha^t$. Thus, the use of $\hb$ here is consistent with those references as well.
\end{rem}

As in \cite{GIL1},\cite{GIL2},\cite{GIL3}, it will be convenient to replace the variable $\mu$ by
\[
\zeta=\mu/x 
\]
for the Stokes analysis.
Then
\begin{equation}\label{betahat}
\hb=
\left(
-\tfrac{1}{\zeta^2}\tilde E_+ - \tfrac{1}{\zeta}xw_x + x^2 \tilde E_-
\right)d\zeta.
\end{equation}

The differential equation for covariant constant sections of $d-\hb$ (in the trivial vector bundle over the Riemann sphere $\mathbb C\sqcup\infty$) is meromorphic with poles of order $2$ at $\zeta=0,\infty$. 
When $G$ is a matrix group this equation may be written as
\begin{equation}\label{merom}
\tfrac{d\Psi}{d\zeta}=
\left(
-\tfrac{1}{\zeta^2}\tilde E_+ - \tfrac{1}{\zeta}xw_x + x^2 \tilde E_-
\right)
\Psi
\end{equation}
where $\Psi$ is the \lq\lq fundamental solution matrix\rq\rq.  For general $G$ we have 
$\Psi^\ast \rightMC = \hb$ where $\rightMC$ is the right Maurer-Cartan form. 

In analogy with Proposition \ref{hatalphasymmetries} we can establish the following symmetries of $d-\hb$:

\begin{pro}\label{hatbetasymmetries}
Under the same conditions as Proposition \ref{alphasymmetries}, the connection form $\hb$ of (\ref{betahat}) satisfies

(i) $\tau(\hb(\zeta))=\hb(e^{-2\pi\i/s}\zeta)$

(ii) $\sigma(\hb(\zeta))=\hb(-\zeta)$

(iii) $\chi(\hb(\zeta))= \hb(1/(x^2\bar\zeta))$ 

(iv) $\theta(\hb(\zeta))=\hb(\bar\zeta)$ (when the $c^{\pm}_i$ are real)

\noindent for all $\zeta\in\mathbb C^\ast$.  We recall that $\tau,\sigma,\chi$ and $\theta$ were defined in equations
(\ref{tau-def}),(\ref{sigma-def}),(\ref{chi-def}) and Definition \ref{split}.
\end{pro}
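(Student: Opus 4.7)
The argument parallels the proof of Proposition \ref{hatalphasymmetries} very closely, since $\hb$ has the same structural form as $\hat\alpha$ with the roles of $\tilde E_+$ and $\tilde E_-$ interchanged and with $\mu$ replaced by $\zeta = \mu/x$; hence the four symmetries are direct translations of the corresponding ones for $\hat\alpha$. First I would collect the actions of each involution on the ingredients of $(\ref{betahat})$. Recall $\tau = \Ad P_0$ fixes $\fh$ pointwise, so it commutes with $\Ad e^w$, and $\tau(\tilde E_\pm) = e^{\pm 2\pi \i/s}\, \tilde E_\pm$. The Frobenius condition $\sigma(w) = w$ forces $\sigma$ to commute with $\Ad e^w$; combined with Proposition \ref{sigma} and the balancing conditions $(\ref{balancing})$, this gives $\sigma(\tilde E_\pm) = -\tilde E_\pm$. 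For $\chi = \sigma\rho$ one has $\chi(w) = -w$, hence $\chi \circ \Ad e^w = \Ad e^{-w} \circ \chi$, and from the proof of Proposition \ref{alphasymmetries}(iii) we obtain $\chi(\tilde E_\pm) = \tilde E_\mp$. Finally $\theta$ is the identity on $\fh_\sharp$ and thus commutes with $\Ad e^w$; for real $c_i^\pm$ we have $\theta(E_\pm) = E_\pm$ and hence $\theta(\tilde E_\pm) = \tilde E_\pm$. The middle term $xw_x$ is fixed by $\tau,\sigma,\theta$ and sent to $-xw_x$ by $\chi$.

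Given these ingredients, each of (i)--(iv) reduces to a bookkeeping match: apply the involution to the coefficient of $d\zeta$ in $(\ref{betahat})$ and separately substitute $\zeta \mapsto e^{-2\pi \i/s}\zeta,\ -\zeta,\ 1/(x^2\bar\zeta),\ \bar\zeta$ into $\hb$ while tracking the induced transformation of $d\zeta$; the two answers must agree. For (i), the rescaling factor $e^{-2\pi \i/s}$ coming from $d\zeta$ conspires with the eigenvalues $e^{\pm 2\pi \i/s}$ on $\tilde E_\pm$ to produce the correct weights on all three terms $\zeta^{-2}\tilde E_+,\ \zeta^{-1} xw_x,\ x^2 \tilde E_-$. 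For (ii), the sign changes from $\tilde E_\pm \mapsto -\tilde E_\pm$ and $d\zeta \mapsto -d\zeta$ combine with the $\zeta$-scaling to match. For (iv), conjugate-linearity of $\theta$ together with $d\zeta \mapsto d\bar\zeta$ and $\theta(\tilde E_\pm) = \tilde E_\pm$ yields the identity.

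The only genuine wrinkle is (iii), and it is here that the rescaling $\zeta = \mu/x$ actually matters. The natural inversion $\mu \mapsto 1/\bar\mu$ becomes $\zeta \mapsto \zeta_0 := 1/(x^2\bar\zeta)$, so $\zeta_0^{-1} = x^2\bar\zeta$, $\zeta_0^{-2} = x^4\bar\zeta^2$, and $d\zeta_0 = -x^{-2}\bar\zeta^{-2}\, d\bar\zeta$. Expanding $\hb(\zeta_0)$ and clearing, the two extreme terms $-\zeta_0^{-2}\tilde E_+$ and $x^2\tilde E_-$ swap and pick up factors $-\bar\zeta^{-2}$ and $x^2$ respectively; this is exactly $\chi(\hb(\zeta))$ since $\chi$ is conjugate-linear and interchanges $\tilde E_+$ with $\tilde E_-$. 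The middle term matches because $\chi(xw_x) = -xw_x$ cancels the Jacobian sign. I do not anticipate any serious obstacle: everything is a careful matching argument structurally identical to Propositions \ref{alphasymmetries} and \ref{hatalphasymmetries}, with the $x^2$ factor from the $\mu \to \zeta$ rescaling in (iii) being the only subtlety.
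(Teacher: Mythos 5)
Your proposal is correct and follows exactly the route the paper intends: the paper states this proposition with no explicit proof, merely invoking the analogy with Propositions \ref{alphasymmetries} and \ref{hatalphasymmetries}, and your write-up is precisely the detailed verification of that analogy (the actions of $\tau,\sigma,\chi,\theta$ on $\tilde E_\pm$ and $xw_x$, combined with the substitution of $\zeta$ and the induced transformation of $d\zeta$). Your computation in (iii), including the Jacobian $d\zeta_0=-x^{-2}\bar\zeta^{-2}d\bar\zeta$ and the swap of the two extreme terms, checks out.
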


Classical o.d.e.\ theory for systems with an irregular pole --- such as (\ref{merom}) --- is based on two basic results: (1) one can choose a formal series solution in a neighbourhood of such a pole; (2) on each \lq\lq good\rq\rq\ sector (Stokes sector) centred at that pole, there exists a unique holomorphic solution whose asymptotic expansion is the chosen formal solution.  Details of the classical point of view can be found in chapter 1 of \cite{FIKN06}.  

The theory was adapted to the situation where the meromorphic form takes values in the Lie algebra $\fg$ of a complex reductive Lie group $G$, in \cite{B_imrn}. We shall apply this version of the theory, with minor modifications, to $\hb$. In our treatment of the tt*-Toda equations, $G$ will be a complex simple Lie group.

The reality conditions (iii),(iv) in Proposition \ref{hatbetasymmetries} relate the Stokes data at $\zeta=0$ to the Stokes data at $\zeta=\infty$ (cf.\ \cite{GIL2}, section 2).  For this reason it suffices to consider the Stokes data at $\zeta=0$.

\begin{lm}\label{lm:formalsolution} At $\zeta=0$,
equation (\ref{merom}) has a unique \lq\lq\,$G$-valued\rq\rq\   formal fundamental solution of the form
$ 
\Psiz_f(\zeta)=e^{-w}(I+\sum_{k\geq 1}\psi_k \zeta^k)e^{\frac{1}{\zeta}E_+}. 
$
\end{lm}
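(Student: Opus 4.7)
The plan is to substitute the ansatz directly into (\ref{merom}), extract a recursion for the coefficients $\psi_k$, and solve it uniquely using Kostant's principal $\fsl_2$-theory.

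Writing $\Psi = e^{-w} P(\zeta) e^{E_+/\zeta}$ with $P(\zeta) = I + \sum_{k\ge 1} \psi_k \zeta^k$, I would differentiate, substitute into (\ref{merom}), and left-multiply by $e^w$. Using $\Ad(e^w)\tilde E_+ = E_+$, the commutation of $w$ with $w_x$ (both lying in $\fh$), and $\Ad(e^w)\tilde E_- = \Ad(e^{2w})E_-$, the system reduces to
\[
P'(\zeta) \;=\; -\frac{1}{\zeta^2}[E_+, P(\zeta)] \;-\; \frac{xw_x}{\zeta}\,P(\zeta) \;+\; x^2\,\Ad(e^{2w})E_- \cdot P(\zeta).
\]
Matching coefficients of $\zeta^k$ then yields $[E_+,\psi_1] = -xw_x$ and, for $k\ge 0$,
\[
[E_+,\psi_{k+2}] \;=\; -(k+1)\,\psi_{k+1} \;-\; xw_x\,\psi_{k+1} \;+\; x^2\,\Ad(e^{2w})E_-\cdot\psi_k,
\]
i.e.\ a sequence of linear equations $[E_+,\psi_k] = R_k$ with $R_k$ a known polynomial in $\psi_1,\dots,\psi_{k-1}$.

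To solve this recursion uniquely I would embed $E_+=\sum_i c_i^+ e_{\alpha_i}$ (which is a principal nilpotent, all $c_i^+$ being nonzero) in a principal $\fsl_2$-triple $(E_+,h_+,f_+)$, obtained by rescaling Kostant's triple in (\ref{tds}). In a faithful representation $V$ of $G$, representation theory of this triple decomposes $\End(V)$ into irreducible $\fsl_2$-modules and yields the complementary splittings
\[
\End(V) \;=\; \ker(\ad E_+) \oplus \mathrm{Im}(\ad f_+) \;=\; \ker(\ad f_+) \oplus \mathrm{Im}(\ad E_+),
\]
with $\ad(E_+)\colon\mathrm{Im}(\ad f_+)\to\mathrm{Im}(\ad E_+)$ a linear isomorphism. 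Uniqueness is pinned down by requiring $\psi_k\in\mathrm{Im}(\ad f_+)$ for every $k$. The base case is solvable since $\fh\subset\mathrm{Im}(\ad E_+)$ --- indeed $[E_+,e_{-\alpha_j}]=c_j^+ H_{\alpha_j}$ with $c_j^+\ne 0$, and the $H_{\alpha_j}$ span $\fh$ --- and the inductive step reduces to checking $R_k\in\mathrm{Im}(\ad E_+)$, which follows from the compatibility of the principal grading with the operations that build $R_k$.

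The main obstacle is that the leading coefficient $-\tilde E_+/\zeta^2$ is nilpotent rather than semisimple, so the classical Birkhoff splitting theorem for systems with distinct eigenvalues does not apply directly. Kostant's principal $\fsl_2$-decomposition is the replacement: it supplies both the canonical complement of $\ker(\ad E_+)$ needed to select $\psi_k$ and the grading compatibility that keeps each $R_k$ in $\mathrm{Im}(\ad E_+)$. Once these ingredients are in place, existence and uniqueness of $\Psiz_f$ drop out of the induction together.
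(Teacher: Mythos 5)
Your argument rests on a misidentification of $E_+$. You treat $E_+=\sum_i c_i^+ e_{\alpha_i}$ as a principal nilpotent and propose to embed it as the nilpositive element of a principal $\fsl_2$-triple $(E_+,h_+,f_+)$ obtained by rescaling (\ref{tds}). But the sum defining $E_+$ runs over $i=0,\dots,l$, and $\alpha_0=-\psi$, so the term $c_0^+e_{-\psi}$ is present: $E_+$ is a cyclic element in the sense of Kostant, not a nilpotent one. Section \ref{eqns} recalls precisely this point for $z_0=e_0+e_{-\psi}$: such an element is regular \emph{semisimple}, and its centralizer is a Cartan subalgebra in apposition to $\fh$. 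A semisimple element cannot be the nilpositive member of an $\fsl_2$-triple, so the splitting $\End(V)=\Ker(\ad E_+)\oplus\mathrm{Im}(\ad f_+)$ you invoke does not exist, and the ``main obstacle'' you identify --- a nilpotent leading coefficient --- is illusory. (The principal nilpotent $e_0=\sum_{i=1}^l a_ie_{\alpha_i}$ is the leading coefficient of the \emph{open} Toda system, $k_0=0$, which the paper explicitly excludes.)

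The correct framework, which is the one the paper uses via Lemma 2.1 of \cite{B_imrn} and Appendix \ref{sec:formalsolution}, exploits the semisimplicity: $\ad E_+$ is invertible on the $B$-orthogonal complement of the Cartan subalgebra $\fh^\prime=\fg^{E_+}$, and in the recursion $[E_+,\psi_k]=R_k$ the component of $\psi_k$ transverse to $\fh^\prime$ is determined at order $k$, while its $\fh^\prime$-component is fixed by the solvability condition (vanishing of the $\fh^\prime$-projection of $R_{k+1}$) at the next order, thanks to the term $-k\,\psi_k$ appearing there. Your normalization $\psi_k\in\mathrm{Im}(\ad f_+)$, together with the unargued assertion that every $R_k$ lies in $\mathrm{Im}(\ad E_+)$ by ``grading compatibility'', is therefore not only unavailable but would generically be inconsistent with the equation, since the $\fh^\prime$-components of the $\psi_k$ need not vanish. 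The one genuine subtlety is the base case --- the triviality of the formal monodromy, i.e.\ that $xw_x$ has no $\fh^\prime$-component --- which the paper obtains from Kostant's theorem that Cartan subalgebras in apposition are $B$-orthogonal. Your computation $[E_+,e_{-\alpha_j}]=c_j^+H_{\alpha_j}$ (valid because $\alpha_i-\alpha_j$ is never a root for $i\ne j$ in $\{0,\dots,l\}$) does establish $\fh\subseteq\mathrm{Im}(\ad E_+)=(\fh^\prime)^{\perp_B}$ and so recovers exactly this orthogonality; it is the one piece of your argument that survives, but it must be re-embedded in the regular semisimple, not the nilpotent, setting.
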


Note that $e^{-w}$ and $e^{\frac{1}{\zeta}E_+}$ are indeed $G$-valued. To say that the formal series
$I+\sum_{k\geq 1}\psi_k \zeta^k$ is $G$-valued makes sense whenever $G$ is a complex algebraic group, as in our situation. We use $I$ (rather than $e$) for the identity element of $G$ to avoid confusion.

\begin{proof} This follows from Lemma 2.1 of \cite{B_imrn}.  Namely, there exists a unique \lq\lq\,$G$-valued\rq\rq\   formal solution of the form
$\tPsiz_f(\zeta)=(I+\sum_{k\geq 1}\tilde\psi_k \zeta^k) \zeta^{\Lambda} e^{\frac{1}{\zeta}\tilde E_+}$.
A special feature of our situation is that the formal monodromy is trivial, i.e.\ $\Lambda=0$.  This follows from the algorithm given in the proof of Lemma 2.1 of \cite{B_imrn} (formulae (A.8) and (A.9) there), 
because the coefficients of $\zeta^{-2}$ and  $\zeta^{-1}$
in equation (\ref{merom})
belong to Cartan subalgebras which are in apposition
and are therefore orthogonal (by \cite{Ko59}, Theorem 6.7).
The required formal solution is then $\Psiz_f(\zeta)=\tPsiz_f(\zeta) e^{-w}$ (with $\psi_k=e^{w}\tilde\psi_k e^{-w}$).

In order to explain where the formula comes from, we sketch its derivation in Appendix \ref{sec:formalsolution}. In the notation of Appendix \ref{sec:formalsolution} we have
$A_{-1}=-\tilde E_+=\Ad(e^{-w})(-E_+)$,
$\Lambda_{-1}=-E_+$,  and $P=e^{-w}$.  We take $\fh_1=\fg^{-\tilde E_+}=\fg^{\tilde E_+}$ (the Cartan subalgebra containing $\tilde E_+$), and 
$\fh_2=\fg^{-E_+}=\fg^{E_+}$ (the Cartan subalgebra containing $E_+$). The triviality of the formal monodromy means that
$\Lambda_0=0$, where 
$\Lambda_0=\text{Proj}(\Ad(P^{-1})(A_0))$ and $\text{Proj}:\fg \to\fg^{E_+}$ is the projection map. To verify this, we observe that $A_0=-x w_x\in\fh$, and 
$\Lambda_0=\text{Proj}(\Ad(e^{-w})(-x w_x))=
\text{Proj}(-x w_x)$. This is zero because $\fh$ and $\fg^{E_+}$ are in apposition
and therefore orthogonal.
\end{proof}

From now on we shall need the Lie group automorphisms $\tilde\tau, \tilde\sigma, \tilde\chi, \tilde\theta$ induced by the Lie algebra automorphisms $\tau, \sigma, \chi, \theta$.  Recall (cf.\ page 49 of \cite{Kn02}) that a Lie algebra automorphism $f$ induces an automorphism $\tilde f$ of the (simply connected) Lie group $G$ satisfying $\tilde f\circ~\exp=\exp\circ~f$. Note
that $\tilde\tau $ is just conjugation by $P_0=e^{2\pi\i x_0/s}$.

\begin{pro}\label{psif-symm}
Under the same conditions as Proposition \ref{alphasymmetries}, the
formal solution $\Psiz_f$ satisfies

(i) $\tilde\tau (\Psiz_f(e^{2\pi\i/s} \zeta)) =\Psiz_f(\zeta)$

(ii) $\tilde\sigma(\Psiz_f(-\zeta))=\Psiz_f(\zeta) $

%RC    
%(iii) $\tilde\theta(\Psi_f(\bar\zeta))=\Psi_f(\zeta)$

\noindent for all $\zeta\in\mathbb C^\ast$.
\end{pro}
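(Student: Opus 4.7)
The plan is to deduce both symmetries from the uniqueness of the normalized formal fundamental solution established in Lemma \ref{lm:formalsolution}. In each case, the strategy is to exhibit the right-hand side as a formal fundamental solution of $d\Psi = \hb\,\Psi$ of the normalized shape $e^{-w}(I+\sum_{k\geq 1}\psi_k\zeta^k)e^{(1/\zeta)E_+}$, and then invoke uniqueness.

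For (i), set $\tilde\Psi(\zeta):=\tilde\tau(\Psiz_f(e^{2\pi\i/s}\zeta))$. A direct chain-rule calculation combined with Proposition \ref{hatbetasymmetries}(i) shows $\tilde\Psi^{\ast}\rightMC = \hb$, so $\tilde\Psi$ solves the same meromorphic equation as $\Psiz_f$. To verify the normalized shape, I use two facts about $\tau=\Ad P_0$: first, since $x_0\in\fh$ and $\fh$ is abelian, $\tau$ fixes $\fh$ pointwise, so $\tilde\tau(e^{-w})=e^{-w}$; second, $\tau(e_{\alpha_i})=e^{2\pi\i\,\mathrm{ht}(\alpha_i)/s}e_{\alpha_i}=e^{2\pi\i/s}e_{\alpha_i}$ for $i=1,\dots,l$, and for $i=0$ the height of $\alpha_0=-\psi$ is $1-s\equiv 1\pmod{s}$, so the same factor appears. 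Hence $\tau(E_+)=e^{2\pi\i/s}E_+$, and the substitution $\zeta\mapsto e^{2\pi\i/s}\zeta$ exactly cancels this factor in the exponential tail: $\tilde\tau(e^{E_+/(e^{2\pi\i/s}\zeta)})=e^{E_+/\zeta}$. The middle factor $I+\sum\psi_k\zeta^k$ is mapped under $\tilde\tau$ to another formal $G$-valued series of the same form (write it as $\exp(\sum\eta_k\zeta^k)$ with $\eta_k\in\fg$, so $\tilde\tau$ acts through $\tau$ on each coefficient). Uniqueness in Lemma \ref{lm:formalsolution} then forces $\tilde\Psi=\Psiz_f$.

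For (ii), set $\tilde\Psi(\zeta):=\tilde\sigma(\Psiz_f(-\zeta))$. By Proposition \ref{hatbetasymmetries}(ii) combined with the chain-rule Jacobian $-1$, we again obtain $\tilde\Psi^{\ast}\rightMC = \hb$. For the normalized shape, the Frobenius condition $\sigma(w)=w$ in Definition \ref{tt*-Toda} yields $\tilde\sigma(e^{-w})=e^{-w}$, while Proposition \ref{sigma} together with the balancing relations (\ref{balancing}) gives $\sigma(e_{\alpha_i})=-e_{\alpha_{\nu(i)}}$ and $c^{+}_i=c^{+}_{\nu(i)}$, so $\sigma(E_+)=-E_+$. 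Consequently $\tilde\sigma(e^{-E_+/\zeta})=e^{E_+/\zeta}$, which matches the substitution $\zeta\mapsto-\zeta$ applied to the exponential tail. The middle factor is again of the correct form by the same exponential coordinate argument. Uniqueness closes the proof.

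The only mildly delicate point is the action of the Lie group automorphisms $\tilde\tau,\tilde\sigma$ on the formal power-series factor; writing it in exponential form reduces this to coefficient-wise action of $\tau,\sigma$, and no analytic issues arise because everything is formal. I do not expect a real obstacle here: both identities are essentially forced by the symmetries of $\hb$ already established in Proposition \ref{hatbetasymmetries} together with the Frobenius condition $\sigma(w)=w$, once uniqueness is invoked.
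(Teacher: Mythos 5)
Your proof is correct and follows exactly the paper's own argument: use Proposition \ref{hatbetasymmetries} to show the left-hand side solves the same meromorphic equation, check it has the normalized shape of Lemma \ref{lm:formalsolution} (via $\tau|_\fh=\mathrm{id}$, $\tau(E_+)=e^{2\pi\i/s}E_+$, $\sigma(w)=w$, $\sigma(E_+)=-E_+$), and invoke uniqueness of the formal solution. The paper states this in three sentences; you have merely supplied the routine verifications it leaves implicit, including the correct treatment of $\alpha_0=-\psi$ via $\mathrm{ht}(\alpha_0)\equiv 1\pmod{s}$.
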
 

\begin{proof} In each case, it follows from Proposition \ref{hatbetasymmetries} that the left hand side satisfies (\ref{merom}). Substitution of the formula for $\Psiz_f(\zeta)$ from Lemma \ref{lm:formalsolution} yields a formal solution of the same type as $\Psiz_f(\zeta)$.  By uniqueness of the formal solution, this must coincide with $\Psiz_f(\zeta)$.
\end{proof} 

According to Definition 2.2 of \cite{B_imrn}, the singular directions (or anti-Stokes directions) associated to the connection form $\hat\beta$ at $\zeta=0$ are the rays in $\bC$ from the origin to the complex numbers
\begin{equation*}
\{
\beta(-\tilde{E}_+)
\st
\beta\in\Delta^{-\tilde{E}_+}
\}
\end{equation*}
where $\Delta^{-\tilde{E}_+}$ ($=\Delta^{\tilde{E}_+}$)
denotes the roots of $\fg$ 
with respect to the 
Cartan subalgebra $\fg^{-\tilde{E}_+}$ ($=\fg^{\tilde{E}_+}$).
This is just the set of nonzero eigenvalues of
$\ad(-\tilde E_+)$ (i.e., of $\ad(\tilde E_+)$).  Since
$\ad(\tilde E_+) = \Ad e^{-w}\circ \ad(E_+) \circ \Ad e^{w}$, the same set can be described as
\begin{equation*}
\{
\beta(-{E}_+)
\st
\beta\in\Delta^{-{E}_+}
\}
\end{equation*}
where $\Delta^{-{E}_+}$ ($=\Delta^{{E}_+}$)
denotes the roots of $\fg$ 
with respect to the 
Cartan subalgebra $\fg^{-{E}_+}$ ($=\fg^{{E}_+}$).
We shall use this description from now on.
To simplify notation we shall write
\[
\fh^\prime=\fg^{{E}_+},\quad \Delta^\prime=
\Delta^{{E}_+}.
\]

\begin{thm}\label{thm:singulardirection}
Assume that $l$ (the rank of $\fg$) is greater than $1$. Then
there are $2s$ equally spaced singular directions, where $s$ is the Coxeter number.
\end{thm}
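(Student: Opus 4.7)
Singular directions at $\zeta=0$ are, per Definition 2.2 of \cite{B_imrn}, the rays through the nonzero eigenvalues of $\ad(-\tilde E_+)$; since $\tilde E_+=\Ad(e^{-w})E_+$, these coincide with the rays through $\{\beta(-E_+):\beta\in\Delta'\}$, and regularity of $E_+$ in $\fh'=\fg^{E_+}$ guarantees that every such value is nonzero. The strategy has two parts: (i) exhibit a rotational symmetry of this direction set, and (ii) fix its cardinality via the Coxeter plane.

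\emph{Symmetry.} Each summand $e_{\alpha_i}$ of $E_+$ is a $\tau$-eigenvector with eigenvalue $e^{2\pi\i/s}$: for $1\le i\le l$ this follows from $\mathrm{ht}(\alpha_i)=1$, and for $\alpha_0=-\psi$ from $\mathrm{ht}(-\psi)=1-s\equiv 1\pmod s$. Hence $\tau(E_+)=e^{2\pi\i/s}E_+$, so $\tau$ preserves $\fh'$ and acts on $\Delta'$ via $\beta\mapsto \beta\circ\tau^{-1}$; the identity $(\beta\circ\tau^{-1})(-E_+)=e^{-2\pi\i/s}\beta(-E_+)$ shows that the direction set is invariant under rotation by $2\pi/s$. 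Together with the trivial invariance under $\beta\leftrightarrow-\beta$ (rotation by $\pi$), this forces the directions to be evenly spaced.

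\emph{Cardinality.} By Kostant's apposition theorem, $\tau|_{\fh'}$ is a Coxeter element of the Weyl group $W(\fg,\fh')$, acting freely on $\Delta'$ with exactly $l$ orbits of size $s$. The real $2$-plane $\Pi\subset\fh'_{\bR}$ spanned by $\mathrm{Re}\,E_+$ and $\mathrm{Im}\,E_+$ is a $\tau$-invariant plane on which $\tau$ acts as rotation by $2\pi/s$; this is the Coxeter plane. Under the identification $\Pi\cong\bC$, the complex number $\beta(E_+)$ is, up to a fixed positive scalar, the orthogonal projection of $\beta$ onto $\Pi$. The classical Coxeter-plane theorem (see \cite{Kostant10} and Appendix \ref{sec:coxeterplane}), which requires $l\ge 2$, then asserts that the projected roots lie on exactly $2s$ equally spaced rays, which gives the count.

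\emph{Main obstacle.} The symmetry argument alone only yields invariance under the group generated by rotations of $2\pi/s$ and $\pi$: this has order $2s$ when $s$ is odd, but only $s$ when $s$ is even (since then $\pi$ is already a multiple of $2\pi/s$). The substantive step is therefore the Coxeter plane identification, which supplies the offset between the $l$ Coxeter orbits needed to realize all $2s$ rays in the even case, and which is also where the hypothesis $l\ge 2$ enters (the Coxeter plane must be two-dimensional). I would invoke the Coxeter-plane statement as a black box from the appendix; a self-contained alternative is to derive it from the Kostant--Steinberg decomposition of $\Delta$ into $2s$ subsets that is developed in section \ref{lie}.
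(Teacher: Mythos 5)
Your argument is correct, but it is not the proof the paper gives under the theorem statement: the paper's in-text proof is a representation-theoretic argument valid only for the classical Lie algebras, while your route is essentially the ``alternative proof, valid for all simple $\fg$'' that the paper defers to Appendix \ref{sec:coxeterplane}. Concretely, the paper first shows that for any $N$-dimensional representation $\theta$ the nonzero eigenvalues of $\theta(E_+)$ fall into full orbits of the $s$-th roots of unity (via conjugation by $\tilde\theta(P_0)$ and a determinant identity); for the standard representations of $\fsl_{n+1}\bC$, $\fso_n\bC$, $\fsp_n\bC$ one has $s\le N<2s$, so there is exactly one orbit, i.e.\ $s$ equally spaced nonzero eigenvalues, and then expressing the adjoint representation as $\theta^\ast\otimes\theta-1$, $\wedge^2\theta$, $S^2\theta$ doubles this to $2s$ equally spaced directions (using $s>2$). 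That argument is elementary and self-contained but type-by-type. Your argument --- rotation by $2\pi/s$ from $\tau(E_+)=e^{2\pi\i/s}E_+$, plus the identification of the set $\{\beta(-E_+)\}$ with the root projections onto the Coxeter Plane, plus the classical fact that these projections occupy exactly $2s$ equally spaced rays --- is uniform in $\fg$ and is exactly what Proposition \ref{ident} and the surrounding discussion in the appendix establish. What you lose is self-containedness: the $2s$-ray statement for the Coxeter Plane is itself a nontrivial classical theorem (Steinberg; see also \cite{Kostant10} and \S3.19 of \cite{H}), whereas the paper's in-text proof needs only linear algebra once the standard representations are in hand.

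Two small cautions. First, your sentence ``this forces the directions to be evenly spaced'' in the symmetry paragraph is false as stated --- invariance under rotation by $2\pi/s$ (and by $\pi$) only makes the direction set a union of rotation orbits, whose relative offsets are unconstrained; you correctly retract this in your ``main obstacle'' paragraph, but the two paragraphs should be reconciled. Second, when you invoke the Coxeter Plane as a black box, make sure the statement you cite includes that \emph{all} $2s$ equally spaced rays are actually realized by root projections (not merely that the projections lie on at most $2s$ rays); this is part of the classical result (each of the $l$ Coxeter orbits projects onto a regular $s$-gon, and consecutive occupied rays are separated by exactly $\pi/s$), and it is precisely the content needed to pin down both the count and the spacing in the case $s$ even.
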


\begin{proof} First we give a short direct proof which is valid for the classical simple Lie algebras. An alternative proof, valid for all simple $\fg$, will be given in Appendix \ref{sec:coxeterplane}. The direct proof exploits the fact that the adjoint representation
can be expressed in terms of a \lq\lq standard representation\rq\rq\ whose dimension is close to the Coxeter number. 

Inspired by \cite{MOP81}, we note that for any $N$-dimensional complex Lie group representation $\tilde\theta:G\to\Aut(V)$ and associated Lie algebra representation $\theta:\fg\to\End(V)$, the nonzero eigenvalues of $\theta(E_+)$ occur in orbits of the $s$-th roots of unity, each orbit having  exactly $s$ elements.  This follows from 
$
\vert \theta(E_+)-\lambda\vert =
\vert \tilde\theta(P_0) \theta(E_+) \tilde\theta(P_0)^{-1} -\lambda\vert =
\vert \theta(\Ad(P_0)E_+)-\lambda\vert =
\vert e^{2\pi\i/s} \theta(E_+) -\lambda\vert=
e^{2N\pi\i/s}\vert  \theta(E_+) -e^{-2\pi\i/s}\lambda\vert.
$

For each of $\fg=\fsl_{n+1}\bC$ $(n> 1)$, $\fso_n\bC$ $(n\ge 5)$,
$\fsp_n\bC$ $(n> 1)$, we take $\theta$ to be the standard representation (thus $N=n+1, n, 2n$ respectively). The Coxeter number $s$ satisfies $s\le N< 2s$ in each case (e.g.\ from Table 1 of \cite{MOP81}). Therefore there is exactly one orbit of nonzero eigenvalues, giving
$s$ equally spaced directions.

On the other hand it is well known that the adjoint representations are given,
respectively, by
$\theta^\ast\otimes\theta - 1, \wedge^2\theta, S^2\theta$. Since we are assuming that $l>1$ (hence $s>2$), it follows that the nonzero eigenvalues of $\ad E_+$ give exactly $2s$ equally spaced directions in the complex plane.
\end{proof}

Let us label the singular directions $d_1,\dots,d_{2s}$ clockwise.  For the moment we choose the initial direction $d_1$ arbitrarily; in the next section we shall see that there is a preferred choice. In view of the cyclic symmetry, it will be convenient to use singular directions with fractional indices $d_1,d_{1+\frac 1s},\dots,d_{2+\frac {s-1}s}$ in this section (although we shall revert to $d_1,\dots,d_{2s}$ in the next section). We denote the corresponding angles by
\[
\theta_{1+\frac js},\quad 0\le j\le 2s-1
\]
and then extend the notation by allowing $j\in\bZ$.  As in \cite{B_imrn} we
define sectors at the origin in $\bC$ by
$
\supersec_i =
(\theta_i -\tfrac\pi2,\theta_{i-\frac 1s} +\tfrac\pi2).
$
We have $\supersec_i =\supersec_{i+2}$.  In the universal covering $\tilde\bC^\ast$ we have corresponding sectors
$
\tildesupersec_i =
(\theta_i -\tfrac\pi2,\theta_{i-\frac 1s} +\tfrac\pi2),
$
and $\tildesupersec_i =\tildesupersec_{i+2}+2\pi$.

\begin{lm}\label{lm:canonicalsolution} 
There is a unique $G$-valued holomorphic \lq\lq canonical fundamental solution\rq\rq\  $\Psiz_i$ on $\supersec_i$ whose asymptotic expansion is $\Psiz_f$ as $\zeta\to 0$ in the sector $\supersec_i$. 
\end{lm}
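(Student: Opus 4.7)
The plan is to apply the $G$-valued existence and uniqueness theorem for canonical sectorial solutions of irregular meromorphic systems, as formulated in \cite{B_imrn} (itself the Lie-group enhancement of the classical Hukuhara--Turrittin--Sibuya theorem). The formal solution $\Psiz_f$ of Lemma \ref{lm:formalsolution} is the input, and the content of the present lemma is that it is actually \emph{summable} to a genuine holomorphic section on the supersector $\supersec_i$.

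First I would verify the two hypotheses of that theorem at the pole $\zeta=0$ of $\hb$. The leading irregular coefficient $-E_+$ lies in the Cartan subalgebra $\fh^\prime=\fg^{E_+}$, which is the ``regularity of the irregular type'' condition. Moreover, the formal monodromy of $\hb$ is trivial, which is precisely what was established in the proof of Lemma \ref{lm:formalsolution}: the apposition of $\fh$ and $\fh^\prime$ (Kostant \cite{Ko59}, Theorem 6.7) implies $\fh\perp_B\fh^\prime$, so the projection $\Lambda_0$ of $A_0=-xw_x$ onto $\fh^\prime$ vanishes. Consequently the preferred formal fundamental solution of Lemma \ref{lm:formalsolution} has no $\zeta^{\Lambda}$ factor, which is the clean form presupposed by the cited theorem.

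Second I would check that $\supersec_i=(\theta_i-\tfrac{\pi}{2},\theta_{i-\frac{1}{s}}+\tfrac{\pi}{2})$ is a Stokes supersector in the sense of \cite{B_imrn}. By Theorem \ref{thm:singulardirection} the singular directions $d_{1+j/s}$ are $2s$ equally spaced rays, so $d_i$ and $d_{i-\frac1s}$ are consecutive, the arc $[\theta_{i-\frac1s},\theta_i]$ contains no other singular direction, and each boundary ray of $\supersec_i$ is at angular distance exactly $\pi/2$ from the nearest singular direction. This is the geometric configuration under which the sectorial existence/uniqueness theorem applies, and a standard invocation of the latter yields a unique holomorphic $G$-valued map $\Psiz_i\colon \supersec_i\to G$ with $(\Psiz_i)^{*}\rightMC=\hb$ and $\Psiz_i\sim\Psiz_f$ (in Poincar\'e's sense) as $\zeta\to 0$ within $\supersec_i$.

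The main technical input underlying all of this is the regularity of $E_+$: it makes $\ad E_+$ semisimple with nonzero eigenvalues $\beta(E_+)$, $\beta\in\Delta^\prime$, so that the singular directions are pairwise transverse and the sectorial Borel summation of the divergent series $\Psiz_f$ converges to a unique holomorphic lift on each supersector of angular width $\pi+\pi/s$. The one place where real work had to be done was the triviality of the formal monodromy, but this was already dispatched in Lemma \ref{lm:formalsolution} by the apposition argument; with that in hand the present lemma is a direct citation of the $G$-valued version of the sectorial existence/uniqueness theorem, and I expect no further obstacle.
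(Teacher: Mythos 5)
Your proposal is correct and follows essentially the same route as the paper: both reduce the lemma to the $G$-valued sectorial existence--uniqueness theorem (Theorem 2.5 of \cite{B_imrn}), with the triviality of the formal monodromy already secured by the apposition argument in Lemma \ref{lm:formalsolution}. The only cosmetic difference is that the paper applies the theorem to $\tPsiz_f$ and then sets $\Psiz_i=\tPsiz_i\,e^{-w}$, whereas you invoke it directly for $\Psiz_f$; since right multiplication by the constant $e^{-w}$ preserves both the solution property and the asymptotic expansion, this changes nothing.
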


\begin{proof}
This follows from Theorem 2.5 of \cite{B_imrn}. Namely 
there is a unique $G$-valued holomorphic solution
$\tPsiz_i$ on $\supersec_i$ whose asymptotic expansion is $\tPsiz_f$ as $\zeta\to 0$ in the sector $\supersec_i$. 
Here, $\tPsiz_f$ is the formal solution defined in the proof of Lemma \ref{lm:formalsolution}.
The required solution is then $\Psiz_i(\zeta)=\tPsiz_i(\zeta) e^{-w}$.

Let us point out here the relation with Stokes sectors and the classical asymptotic existence theorem, as explained in chapter 1 of \cite{FIKN06}.  First, a Stokes sector is a sector which contains exactly one from each pair of Stokes rays $\pm r_i$, where the Stokes rays (in the case of a pole of order $2$) are the rays orthogonal to the singular directions $d_i$. Then $\supersec_i $ is a (maximal) Stokes sector.
The existence theorem states that, on any Stokes sector,  there is a unique 
canonical fundamental solution whose asymptotic expansion is the chosen formal solution, as $\zeta\to 0$ in that sector.
\end{proof}

We may regard $\Psiz_i$ as defined on $\tildesupersec_i$ and then extend it by analytic continuation to
$\tilde\bC^\ast$. With this convention we have
\begin{equation}\label{analyticcontinuation}
\Psiz_{i+2}(e^{-2\pi\i}\zeta)=\Psiz_i(\zeta)
\end{equation}
because, on $\supersec_i$, both have the same asymptotic expansion $\Psiz_f$, and therefore must be equal. By standard abuse of notation, $e^{-2\pi\i}\zeta$ is to be interpreted in the universal covering.

Observe that 
\[
\tildesupersec_i\cap\tildesupersec_{i+\frac1s}=
\{z\in\tilde\bC^\ast
\st
\theta_i-\tfrac\pi2 < \arg z < \theta_i+\tfrac\pi2\}.
\]
This is a sector of width $\pi$ which is symmetrical about $\theta_i$. 
On this sector (and hence on all of $\tilde\bC^\ast$) the solutions
$\Psiz_{i+\frac1s},\Psiz_i$
must agree up to a constant multiplicative factor.
Thus, associated to the singular direction $\theta_i$ (more precisely, to
$\tildesupersec_i\cap\tildesupersec_{i+\frac1s}$),
we have the {\em Stokes factor} $\Kz_{i}\in G$, defined by  
\[
\Psiz_{i+\frac1s}(\zeta) =\Psiz_i(\zeta) \Kz_{i}.
\]  
The {\em Stokes matrix} $\Sz_{i}\in G$ is defined similarly by  $\Psiz_{i+1}(\zeta) =\Psiz_i(\zeta) \Sz_{i}$.  

\begin{pro}\label{KandS} For all $i\in \tfrac1s\bZ$ we have

(i) $\Sz_i=\Kz_i \Kz_{i+\frac1s}\dots \Kz_{i+\frac{s-1}s}$

(ii) $\Kz_{i+2}=\Kz_i$, $\Sz_{i+2}=\Sz_i$

(iii) $\Psiz_i(e^{2\pi\i}\zeta)=\Psiz_i(\zeta) \Sz_i\Sz_{i+1}$ (i.e.\ the monodromy of
$\Psiz_i$ is $\Sz_i\Sz_{i+1}$).

\end{pro}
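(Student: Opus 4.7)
The plan is that all three statements are formal consequences of the definitions of $\Kz_i$, $\Sz_i$ together with the analytic continuation identity (\ref{analyticcontinuation}), once one keeps careful track of the fractional indexing and the convention that $\Psiz_i$ has been extended to $\tilde\bC^\ast$.

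For part (i), the approach is simply to iterate the defining relation of the Stokes factor $s$ times. By definition, for each $j\in\tfrac1s\bZ$,
\[
\Psiz_{j+\frac1s}(\zeta)=\Psiz_j(\zeta)\, \Kz_j.
\]
Starting with $j=i$ and composing through $j=i+\tfrac{1}{s},\dots,i+\tfrac{s-1}{s}$, one finds
\[
\Psiz_{i+1}(\zeta)=\Psiz_i(\zeta)\, \Kz_i \Kz_{i+\frac1s}\cdots \Kz_{i+\frac{s-1}s}.
\]
Comparing with the definition $\Psiz_{i+1}(\zeta)=\Psiz_i(\zeta)\,\Sz_i$ and using that $G$ acts freely on the right on the space of fundamental solutions, the identity in (i) follows.

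For part (ii), start from (\ref{analyticcontinuation}), rewritten as $\Psiz_{i+2}(\zeta)=\Psiz_i(e^{2\pi\i}\zeta)$. Applying this to the defining relation for $\Kz_{i+2}$ gives
\[
\Psiz_i(e^{2\pi\i}\zeta)\,\Kz_{i+2}=\Psiz_{i+2+\frac1s}(\zeta)=\Psiz_{i+\frac1s}(e^{2\pi\i}\zeta)=\Psiz_i(e^{2\pi\i}\zeta)\,\Kz_i,
\]
where the middle equality is just (\ref{analyticcontinuation}) shifted by $1/s$. Cancelling $\Psiz_i(e^{2\pi\i}\zeta)$ yields $\Kz_{i+2}=\Kz_i$. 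The analogous argument with $\Sz$ in place of $\Kz$ gives $\Sz_{i+2}=\Sz_i$; alternatively this is a direct consequence of (i) together with $\Kz_{i+2}=\Kz_i$.

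For part (iii), iterate the Stokes matrix relation twice to get
\[
\Psiz_{i+2}(\zeta)=\Psiz_{i+1}(\zeta)\,\Sz_{i+1}=\Psiz_i(\zeta)\,\Sz_i\Sz_{i+1},
\]
and then substitute (\ref{analyticcontinuation}) (in the form $\Psiz_{i+2}(\zeta)=\Psiz_i(e^{2\pi\i}\zeta)$) on the left-hand side. This is the asserted monodromy identity.

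The only place where one has to be attentive is part (ii): since the singular directions are equally spaced with angular spacing $\pi/s$, one must verify that the analytic continuation $\zeta\mapsto e^{2\pi\i}\zeta$ on $\tilde\bC^\ast$ really shifts the fractional index by exactly $2$ (and not by some other amount that mixes the fractional labels). Once the convention $\tildesupersec_{i+2}=\tildesupersec_i+2\pi$ established before (\ref{analyticcontinuation}) is applied consistently, the shifted formal expansion on $\tildesupersec_{i+2}$ matches that on $\tildesupersec_i$, so the rest is formal. No additional Lie-theoretic input or estimates are needed.
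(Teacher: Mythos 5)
Your proof is correct and is essentially the paper's own argument in expanded form: the paper disposes of (i) as ``obvious'' (it is exactly your iteration of the Stokes-factor relation) and derives (ii) and (iii) by substituting the definitions of $\Kz_i$ and $\Sz_i$ into the analytic continuation identity (\ref{analyticcontinuation}), precisely as you do. Your closing remark about the indexing convention $\tildesupersec_{i+2}=\tildesupersec_i-2\pi$ is the right point to be careful about and is consistent with the conventions set up before (\ref{analyticcontinuation}).
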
  

\begin{proof} (i) is obvious. (ii) and (iii) follow from (\ref{analyticcontinuation}) after substituting the definitions of the Stokes factors/matrices.
\end{proof}

Regarding (i), we note that $\Sz_i$  (more generally, any product of at most $s$ consecutive Stokes factors) determines the constituent Stokes factors $\Kz_i, \Kz_{i+\frac1s},\dots, \Kz_{i+\frac{s-1}s}$, by the remarks before Lemma 2.7 of \cite{B_imrn}.

\begin{pro}\label{psi-symm}
Under the same conditions as Proposition \ref{alphasymmetries}, the
canonical solutions $\Psiz_i$ satisfy

(i) $\tilde\tau (\Psiz_{i-\frac2s}(e^{2\pi\i/s} \zeta)) =\Psiz_i(\zeta)$

(ii) $\tilde\sigma(\Psiz_{i-1}(e^{\pi\i}\zeta))=\Psiz_i(\zeta) $

%RC    
%(iii) $\tilde\theta(\Psi_f(\bar\zeta))=\Psi_f(\zeta)$

\noindent for all $\zeta\in\mathbb C^\ast$.
\end{pro}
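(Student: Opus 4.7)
The plan is to mirror the proof of Proposition \ref{psif-symm}, replacing the uniqueness of the formal fundamental solution by the uniqueness in Lemma \ref{lm:canonicalsolution}. For (i), I set
\[
\tilde\Psi(\zeta) := \tilde\tau \bigl( \Psiz_{i-\frac2s}(e^{2\pi\i/s}\zeta) \bigr)
\]
and verify three things: that $\tilde\Psi$ satisfies the differential equation (\ref{merom}); that it is a holomorphic $G$-valued map on the Stokes sector $\supersec_i$; and that its asymptotic expansion as $\zeta\to 0$ in $\supersec_i$ equals $\Psiz_f$. Uniqueness then forces $\tilde\Psi = \Psiz_i$.

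For the differential equation, I rewrite Proposition \ref{hatbetasymmetries}(i) in coefficient form, $\hb = A(\zeta)\,d\zeta$, to obtain the identity $\tau(A(\zeta)) = e^{-2\pi\i/s} A(e^{-2\pi\i/s}\zeta)$. Writing $\tilde\tau$ as conjugation by $P_0$, the chain rule yields $\tilde\Psi'(\zeta) = e^{2\pi\i/s}\tau(A(e^{2\pi\i/s}\zeta))\,\tilde\Psi(\zeta)$, which by the identity just noted collapses to $A(\zeta)\tilde\Psi(\zeta)$. For the sector, since the $2s$ singular directions are equally spaced with angular gap $\pi/s$, direct inspection of $\supersec_j = (\theta_j - \tfrac\pi2,\, \theta_{j-\frac1s} + \tfrac\pi2)$ gives $\supersec_{i-\frac2s} = e^{2\pi\i/s}\cdot \supersec_i$, so $\tilde\Psi$ is holomorphic and $G$-valued on $\supersec_i$ (the latter because $\tilde\tau$ is a Lie group automorphism). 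For the asymptotic expansion, the expansion of $\tilde\Psi$ on $\supersec_i$ is $\tilde\tau(\Psiz_f(e^{2\pi\i/s}\zeta))$, which equals $\Psiz_f(\zeta)$ by Proposition \ref{psif-symm}(i).

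Statement (ii) follows the same three-step recipe, with $\tilde\sigma$ in place of $\tilde\tau$, the rotation $e^{\pi\i}$ in place of $e^{2\pi\i/s}$, and appeals to Proposition \ref{hatbetasymmetries}(ii) and Proposition \ref{psif-symm}(ii). The integer shift $i\mapsto i-1$ corresponds to $s$ fractional steps of size $1/s$, i.e.\ to an angular rotation of $\pi$, matching the rotation by $e^{\pi\i}$; the corresponding sector identity $\supersec_{i-1} = e^{\pi\i}\cdot\supersec_i$ follows by the same direct computation.

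The only delicate point I anticipate is sector bookkeeping: the singular directions are labelled clockwise, so one has to check that the rotations of $\zeta$ land in the correct Stokes sector, not in a neighbouring one where the asymptotic expansion would agree only up to a Stokes factor. Once this matching is confirmed, the rest is mechanical, the essential analytic input --- existence and uniqueness of the canonical solution, and the symmetries of $\hb$ --- having already been established.
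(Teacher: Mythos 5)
Your proposal is correct and follows essentially the same route as the paper: both sides of each identity are solutions of (\ref{merom}) by Proposition \ref{hatbetasymmetries}, they share the asymptotic expansion $\Psiz_f$ on $\supersec_i$ by Proposition \ref{psif-symm}, and the uniqueness in Lemma \ref{lm:canonicalsolution} forces them to coincide. The paper states this in three lines; your only addition is to spell out the sector bookkeeping ($\supersec_{i-\frac2s}=e^{2\pi\i/s}\cdot\supersec_i$, $\supersec_{i-1}=e^{\pi\i}\cdot\supersec_i$) and the chain-rule verification of the ODE, both of which check out.
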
 

\begin{proof}  In each case, restricting to $\tildesupersec_i$, 
both sides are solutions,  and by Proposition \ref{psif-symm} they have
the same asymptotic expansion.  By the uniqueness
in Lemma \ref{lm:canonicalsolution}, they must coincide.
\end{proof} 

\begin{pro}\label{K-symm}
Under the same conditions as Proposition \ref{alphasymmetries}, the
Stokes factors $\Kz_i$ satisfy

(i) $\tilde\tau (\Kz_{i-\frac2s}) =\Kz_i$

(ii) $\tilde\sigma(\Kz_{i-1})=\Kz_i$

%RC    
%(iii) $\tilde\theta(\Psi_f(\bar\zeta))=\Psi_f(\zeta)$

\noindent for all $\zeta\in\mathbb C^\ast$.
\end{pro}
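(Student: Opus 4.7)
The plan is to derive the two identities for $\Kz_i$ directly from their defining relation $\Psiz_{i+\frac1s}(\zeta)=\Psiz_i(\zeta)\Kz_i$ by applying the canonical-solution symmetries (i) and (ii) of Proposition \ref{psi-symm}. Since $\tilde\tau$ and $\tilde\sigma$ are group homomorphisms, they distribute across the product on the right-hand side, and the conclusion then follows by invoking once more the definition of a Stokes factor, together with the fact that the Stokes factor is uniquely determined by the ratio $\Psiz_{j+\frac1s}\cdot(\Psiz_j)^{-1}$.

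For (i), I would begin by substituting $j=i$ and $j=i+\frac1s$ in the relation $\Psiz_j(\zeta)=\tilde\tau(\Psiz_{j-\frac2s}(e^{2\pi\i/s}\zeta))$ from Proposition \ref{psi-symm}(i), obtaining
\[
\tilde\tau\bigl(\Psiz_{i-\frac1s}(e^{2\pi\i/s}\zeta)\bigr)
=
\tilde\tau\bigl(\Psiz_{i-\frac2s}(e^{2\pi\i/s}\zeta)\bigr)\,\Kz_i.
\]
Applying $\tilde\tau^{-1}$ and setting $\eta=e^{2\pi\i/s}\zeta$ gives
\[
\Psiz_{i-\frac1s}(\eta)=\Psiz_{i-\frac2s}(\eta)\,\tilde\tau^{-1}(\Kz_i),
\]
but by the very definition of a Stokes factor this same relation holds with $\tilde\tau^{-1}(\Kz_i)$ replaced by $\Kz_{i-\frac2s}$. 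Equating, $\tilde\tau^{-1}(\Kz_i)=\Kz_{i-\frac2s}$, which is (i).

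For (ii), the argument is identical with $\tilde\tau$ replaced by $\tilde\sigma$ and the rotation $e^{2\pi\i/s}$ replaced by $e^{\pi\i}$, using Proposition \ref{psi-symm}(ii) for $j=i$ and $j=i+\frac1s$. After applying $\tilde\sigma^{-1}$ and setting $\eta=e^{\pi\i}\zeta$, one reads off $\tilde\sigma^{-1}(\Kz_i)=\Kz_{i-1}$.

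I do not expect any real obstacle: the only points requiring minor care are that $\tilde\tau,\tilde\sigma$ are genuine group automorphisms of $G$ (so $\tilde\tau(AB)=\tilde\tau(A)\tilde\tau(B)$, and similarly for $\tilde\sigma$), and that the relation $\Psiz_{j+\frac1s}=\Psiz_j\,\Kz_j$ extends to all of $\tilde\bC^\ast$ by analytic continuation, so that substituting $\eta=e^{2\pi\i/s}\zeta$ or $\eta=e^{\pi\i}\zeta$ is legitimate. Both points are built into the setup of the previous lemmas.
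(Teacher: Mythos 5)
Your proposal is correct and is exactly the paper's argument: the paper's proof of this proposition is the one-line remark that the identities follow from Proposition \ref{psi-symm} after substituting the definition of the Stokes factor, which is precisely the computation you carry out. The details you supply (applying $\tilde\tau^{-1}$ resp.\ $\tilde\sigma^{-1}$, changing variable to $\eta$, and using that the Stokes factor is determined by the pair of canonical solutions) are all sound.
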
 

\begin{proof} These 
follow from (\ref{psi-symm}) after substituting the definition of Stokes factor.
\end{proof} 

Proposition \ref{psi-symm} (i)  and the definition of the $\Kz_{i}$
give
$\tilde\tau (\Psiz_{i}(e^{2\pi\i/s} \zeta)) =
\Psiz_{i}(\zeta)  \Kz_{i} \Kz_{i+\frac1s}$, in other words
\begin{equation}
P_0 \Psiz_{i}(e^{2\pi\i/s} \zeta) =
\Psiz_{i}(\zeta)  \Kz_{i} \Kz_{i+\frac1s} P_0.
\end{equation}
We regard this as a \lq\lq partial monodromy\rq\rq\  formula. 
Applying it (or Proposition \ref{K-symm} (i)) $s$ times we see that the group 
element $\Kz_{i} \Kz_{i+\frac1s} P_0$ is an $s$-th root of the monodromy. By
Proposition \ref{psif-symm} (i), $P_0$ plays the role of an $s$-th root of the formal monodromy. 

As $\Kz_{1} \Kz_{1+\frac1s} P_0$ determines the individual Stokes factors
$\Kz_{1},\Kz_{1+\frac1s}$ (by the remarks above) and hence generates all Stokes data, it will be our main focus. We introduce the following notation for it:

\begin{df}\label{def:qqpi}
$M^{(0)}=\Kz_{1} \Kz_{1+\frac1s} P_0$.
\end{df}

To investigate $M^{(0)}$ we need further results from \cite{B_imrn}.

\begin{lm}(\cite{B_imrn}, Definition 2.3 and Lemma 2.4)\label{lm:stokes}

(i) Let $\cR(d_i)$ denote the the set of roots with respect to the Cartan subalgebra $\fh^\prime$ supporting the singular direction $d_i$, i.e.\ 
the set of $\beta\in\Delta^\prime$
for which $\beta(-E_+)\in\bC^\ast$ has direction $d_i$. We have
$\Delta'=\cR(d_1)\sqcup\cR(d_{1+\frac{1}{s}})\sqcup\cdots\sqcup\cR(d_{2+\frac{s-1}{s}})$ (disjoint union).

(ii) The {\em group of Stokes factors associated to} $d_i$ is defined to be the group 
\[
\sto(d_i)=\overset{\scriptstyle }{\underset{\beta\in\cR(d_i)}\Pi} U_\beta
\]
where $U_\beta=\exp(\fg^\prime_\beta)\subset G$ 
and
$\fg^\prime_\beta=\{ \xi\in\fg \st [h,\xi]=\beta(\xi) \ \forall h\in\fh^\prime\}$. The product of the $U_\beta$ can be taken in any order.

(iii) $\Delta'_+=\cR(d_1)\sqcup\cR(d_{1+\frac{1}{s}})\sqcup\cdots\sqcup\cR(d_{1+\frac{s-1}{s}})$ is a set of positive roots of $\Delta^\prime$. The product of the groups $\sto(d_1),\dots,\sto(d_{1+\frac{s-1}{s}})$ is the unipotent part of the Borel subgroup defined by this set of positive roots.
\end{lm}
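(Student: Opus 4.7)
The plan is to verify each part by unpacking the definitions and appealing to elementary facts about the geometry of rays in $\bC$ combined with the bracket structure on root spaces.

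For part (i), this is essentially a tautology once the definitions are laid out. By construction of the singular directions $d_1,\dots,d_{2+\frac{s-1}{s}}$, these are precisely the distinct rays from the origin through the nonzero complex numbers $\{\beta(-E_+):\beta\in\Delta'\}$. Each such $\beta(-E_+)$ lies on exactly one ray $d_i$, which assigns $\beta$ to exactly one $\cR(d_i)$. The disjoint union decomposition of $\Delta'$ follows immediately.

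For part (ii), the key observation is that $\cR(d_i)$ is closed under root addition: if $\beta_1,\beta_2\in\cR(d_i)$ and $\beta_1+\beta_2\in\Delta'$, then $(\beta_1+\beta_2)(-E_+)=\beta_1(-E_+)+\beta_2(-E_+)$ is a sum of two positive multiples of the unit vector along $d_i$, hence lies on $d_i$ itself. Therefore $\fn(d_i):=\bigoplus_{\beta\in\cR(d_i)}\fg'_\beta$ is closed under brackets, since $[\fg'_{\beta_1},\fg'_{\beta_2}]\subset\fg'_{\beta_1+\beta_2}$ lies in $\fn(d_i)$ whenever the sum is a root. This subalgebra is nilpotent (all its roots lie in an open half-plane, so no $\bZ$-linear relation among them returns to $0$), hence $\exp(\fn(d_i))$ is a connected unipotent subgroup of $G$. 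One then checks that the set-theoretic product $\prod_{\beta\in\cR(d_i)} U_\beta$ coincides with $\exp(\fn(d_i))$, independently of the chosen ordering: by Baker--Campbell--Hausdorff, reordering two adjacent factors $U_{\beta_1}U_{\beta_2}$ introduces correction terms living in subgroups $U_{\beta}$ with $\beta=m\beta_1+n\beta_2\in\cR(d_i)$, so these corrections can be absorbed into the remaining factors.

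For part (iii), since there are $2s$ equally spaced singular directions, they split into $s$ antipodal pairs. Choose $d_1$ so that the rays $d_1,d_{1+\frac{1}{s}},\dots,d_{1+\frac{s-1}{s}}$ all lie in an open half-plane $H\subset\bC$; then the remaining rays $d_{2},\dots,d_{2+\frac{s-1}{s}}$ are their antipodes and lie in $-H$. For $\beta\in\Delta'$, the number $\beta(-E_+)$ is nonzero, and $\beta\in\Delta'_+$ iff $\beta(-E_+)\in H$, which holds iff $-\beta\notin\Delta'_+$. Thus $\Delta'=\Delta'_+\sqcup(-\Delta'_+)$. Closure of $\Delta'_+$ under root addition follows from convexity of $H$ exactly as in part (ii). Hence $\Delta'_+$ is a system of positive roots in $\Delta'$. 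The unipotent radical of the corresponding Borel subgroup is $\exp\bigl(\bigoplus_{\beta\in\Delta'_+}\fg'_\beta\bigr)$, and the argument of part (ii), applied now to the larger closed-under-addition set $\Delta'_+$, identifies this with the product $\sto(d_1)\cdots\sto(d_{1+\frac{s-1}{s}})$.

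The main obstacle is the order-independence claim in part (ii): in general, a product of unipotent subgroups is sensitive to ordering, and what rescues us here is precisely the convexity/closure property of $\cR(d_i)$, which forces all BCH correction terms to remain inside the same group. Once this point is settled, parts (i) and (iii) are straightforward geometry of complex rays combined with the half-plane criterion for positive roots.
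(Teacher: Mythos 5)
Your proof is correct. There is nothing in the paper to compare it against: the authors give no proof of this lemma, quoting it directly from Boalch (\cite{B_imrn}, Definition 2.3 and Lemma 2.4), so your argument is in effect a reconstruction of the proof in the cited reference, and it follows the standard route. You correctly identify where the content lies: $\cR(d_i)$ is closed under root addition (a positive integral combination of points on the ray $d_i$ stays on $d_i$) and satisfies $\cR(d_i)\cap(-\cR(d_i))=\emptyset$ (antipodal roots lie on antipodal rays), so the general theorem on products of root subgroups indexed by such a closed set of roots --- Chevalley's commutator formula plus induction, exactly the ``absorption'' step you sketch --- gives both the group property and the order-independence in (ii); applied to the half-plane set $\Delta'_+$ it gives the unipotent radical in (iii). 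Two minor points you could make cleaner: the $s$ consecutive rays $d_1,\dots,d_{1+\frac{s-1}{s}}$ span an angle of $(s-1)\pi/s<\pi$ and therefore lie in an open half-plane for \emph{every} choice of $d_1$, so no choice is actually being made there; and the step from ``$\Delta'_+$ is closed and $\Delta'=\Delta'_+\sqcup(-\Delta'_+)$'' to ``$\Delta'_+$ is a set of positive roots'' deserves an explicit appeal to the standard criterion (such a subset is the set of roots positive on a regular linear functional, here $\beta\mapsto\ell(\beta(-E_+))$ with $\ell$ a real-linear form cutting out the half-plane).
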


\begin{lm}(\cite{B_imrn}, Lemma 2.7)
$\Kz_i\in\sto(d_i)$.
\end{lm}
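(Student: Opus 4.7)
My plan is to adapt the argument of Lemma 2.7 of \cite{B_imrn}, whose crux is an asymptotic analysis of $\Kz_i$ with respect to the root-space decomposition of $\fg$ determined by the Cartan subalgebra $\fh'=\fg^{E_+}$.

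First, I would reformulate the defining relation $\Psiz_{i+\frac1s}(\zeta)=\Psiz_i(\zeta)\Kz_i$ using the form $\Psiz_i(\zeta)=P_i(\zeta)\,e^{E_+/\zeta}$, where $P_i$ is the holomorphic function on $\supersec_i$ from the proof of Lemma~\ref{lm:canonicalsolution} (so $P_i$ is asymptotic to the formal factor $e^{-w}(I+\sum_{k\ge 1}\psi_k\zeta^k)$ of Lemma~\ref{lm:formalsolution}). Substituting yields
\[
\Kz_i \;=\; \Ad\bigl(e^{-E_+/\zeta}\bigr)\bigl(Q(\zeta)\bigr),\qquad Q(\zeta):=P_i(\zeta)^{-1}P_{i+\frac1s}(\zeta).
\]
The overlap $\supersec_i\cap\supersec_{i+\frac1s}$ is the open sector of width $\pi$ centred on $d_i$, and since $P_i$ and $P_{i+\frac1s}$ share the same asymptotic expansion there, $Q(\zeta)-I$ is exponentially small as $\zeta\to 0$ in this sector.

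Next I would factor $\Kz_i$ in a Gauss-type form relative to $\fh'$: fixing any ordering of $\Delta'$, write $\Kz_i = T\cdot\prod_{\beta\in\Delta'}u_\beta$ with $T\in\exp(\fh')$ and $u_\beta\in U_\beta=\exp(\fg'_\beta)$. Because $[E_+,h]=0$ for $h\in\fh'$ and $[E_+,x_\beta]=\beta(E_+)x_\beta$ for $x_\beta\in\fg'_\beta$, conjugation by $e^{E_+/\zeta}$ fixes $T$ while rescaling the exponential coordinate of each $u_\beta$ by $e^{\beta(E_+)/\zeta}$. The identity $\Ad(e^{E_+/\zeta})\Kz_i = Q(\zeta)\to I$ uniformly on the overlap then forces $T=I$, and each nontrivial $u_\beta$ must satisfy $\mathrm{Re}\bigl(\beta(E_+)/\zeta\bigr)\le 0$ uniformly on $\{\arg\zeta\in(\theta_i-\tfrac\pi2,\theta_i+\tfrac\pi2)\}$.

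Finally I would carry out the direct computation: with $\phi=\arg\beta(E_+)$ the sign of $\mathrm{Re}(\beta(E_+)/\zeta)$ is that of $\cos(\phi-\arg\zeta)$, and this is $\le 0$ throughout the open overlap if and only if $\phi=\theta_i+\pi$, equivalently iff $\beta(-E_+)$ has argument $\theta_i$, which is the defining condition for $\beta\in\cR(d_i)$. Hence the only surviving factors of $\Kz_i$ are indexed by $\cR(d_i)$ and $\Kz_i\in\sto(d_i)$. The main obstacle is to make the Gauss-type factorisation rigorous as an identity in $G$ (rather than merely infinitesimally in $\fg$) and to establish the uniformity of the asymptotic estimate for $Q(\zeta)-I$ up to the boundary of the overlap sector; both are handled in \cite{B_imrn} by the iterative Borel-summation construction of $\Kz_i$, which produces the factorisation into root-subgroup contributions automatically and applies verbatim to our setting since we have only replaced $\tilde E_+$ by $E_+$ via $\Ad(e^w)$ in passing from $\tilde\Psiz_f$ to $\Psiz_f$.
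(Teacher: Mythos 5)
The paper offers no argument for this lemma --- it is quoted verbatim from \cite{B_imrn} (Lemma 2.7) --- so there is no internal proof to compare yours against; what you have written is essentially a reconstruction of the Balser--Jurkat--Lutz argument as adapted by Boalch, and its core is correct. From $\Psiz_{i+\frac1s}=\Psiz_i\Kz_i$ one indeed gets $e^{E_+/\zeta}\,\Kz_i\,e^{-E_+/\zeta}=P_i(\zeta)^{-1}P_{i+\frac1s}(\zeta)\to I$ on the width-$\pi$ overlap $\supersec_i\cap\supersec_{i+\frac1s}$, conjugation by $e^{E_+/\zeta}$ scales the $\fg^\prime_\beta$-component by $e^{\beta(E_+)/\zeta}$, and your analysis of the sign of $\cos(\arg\beta(E_+)-\arg\zeta)$ over the open interval $(\theta_i-\tfrac\pi2,\theta_i+\tfrac\pi2)$ correctly isolates $\arg\beta(-E_+)=\theta_i$, i.e.\ $\beta\in\cR(d_i)$, as the only case of uniform decay. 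The one step you should not take as a starting point is the Gauss-type factorization $\Kz_i=T\cdot\prod_{\beta\in\Delta'}u_\beta$: this exists only on a dense open subset of $G$ and is not available a priori for $\Kz_i$. You flag this and defer to \cite{B_imrn}, but the standard remedy is easy to state and removes the difficulty outright: realize $G\subseteq GL(V)$ by a faithful representation and apply the eigenspace decomposition of $\End(V)$ under $\ad(E_+)$ to the element $\Kz_i-I$ itself. Since distinct eigenspaces are linearly independent, the condition $\sum_\mu e^{\mu/\zeta}X_\mu\to 0$ forces every component $X_\mu$ with $\arg\mu\ne\theta_i+\pi$ to vanish; the surviving eigenvalues lie on a single ray not containing $0$, so they span a nilpotent subalgebra of $\End(V)$, whence $\Kz_i$ is unipotent and $\log\Kz_i$ lies in the intersection of that subalgebra with $\fg$, which is exactly $\oplus_{\beta\in\cR(d_i)}\fg^\prime_\beta$. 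This gives $\Kz_i\in\sto(d_i)$ without ever invoking a big-cell factorization, and with that substitution your sketch becomes a complete proof of the cited lemma; your closing observation that the reduction from $\tilde E_+$ to $E_+$ via $\Ad(e^{w})$ does not affect the argument is also correct.
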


\begin{df}\label{def:cm} The subspace $\cM_{\Delta'_+}\subseteq G$ of \lq\lq abstract Stokes data\rq\rq\ for the tt*-Toda equations is defined to be
\[
\cM_{\Delta'_+}=
\sto(d_1) \sto(d_{1+\frac{1}{s}}) P_0=
\{
ABP_0 \in G \mid A\in \sto(d_1), B\in \sto(d_{1+\frac{1}{s}})
\}.
\]
\end{df}

Thus $M^{(0)} \in \cM_{\Delta'_+}$. In the next section we shall describe the space $\cM_{\Delta'_+}$ more explicitly, and in the following section we shall compute $M^{(0)}$ for some solutions of the tt*-Toda equations.

\section{Lie-theoretic description of the Stokes data}\label{lie}

From section \ref{data} we know that the Stokes factors
$\Kz_{1}\in\sto(d_1)$, 
 $\Kz_{1+\frac1s}\in\sto(d_{1+\frac{1}{s}})$ contain all the Stokes data of the meromorphic connection $\hat\beta$. Specifying $\Kz_{1},\Kz_{1+\frac1s}$ is equivalent to specifying the \lq\lq partial monodromy\rq\rq\ matrix
$M^{(0)}=\Kz_{1} \Kz_{1+\frac1s} P_0$ (Definition \ref{def:cm}). The Stokes group $\sto(d_i)$ is the unipotent subgroup of $G$ corresponding to the Lie algebra
$\oplus_{\beta\in\cR(d_i)}\,\fg^\prime_\beta$, where $\cR(d_i)$ is the set of roots supported by the singular direction $d_i$. 
 
The question now arises of computing $\cR(d_i)$ explicitly.  In sub-section \ref{lie1} we answer this question in the following way. 

First, by Lemma \ref{lm:stokes}, we know that
$\cR(d_1)\sqcup\cR(d_{1+\frac{1}{s}})\sqcup\cdots\sqcup\cR(d_{1+\frac{s-1}{s}})=\Delta^\prime_+$, a positive root system with respect to the Cartan subalgebra $\fh^\prime$. We shall show (Theorem \ref{thm:headandtail}) that 
$\cR(d_1)\sqcup\cR(d_{1+\frac{s-1}{s}})$ is nothing but the associated set
$\Pi^\prime$ of simple roots.  Thus the sector consisting of a \lq\lq half period\rq\rq\  of singular directions gives the positive system, and its first and last singular directions --- the head and tail of the sector --- give the simple roots.

The resulting partition of $\Pi^\prime$ into two disjoint subsets was known to Kostant and Steinberg in a purely Lie-theoretic context. We use this to deduce that
$\cR(d_1)\sqcup\cR(d_{1+\frac{1}{s}})$ consists of those positive roots $\beta\in\Delta^\prime_+$ which become negative under the action of
$\gamma^{-1}$ (and we show that $\cR(d_i)$ can be expressed in a similar way for any $i$). This gives a Lie-theoretic characterization of the possible values of the Stokes factors $\Kz_{1},\Kz_{1+\frac1s}$ of the meromorphic connection.

In sub-section \ref{lie2} we cast this Stokes data into a canonical form, which will be used (in the next section) to find the actual Stokes factors associated to some particular solutions of the tt*-Toda equations.  

It may be worth emphasizing here that, in general, it is difficult to compute the Stokes data of an o.d.e., as it is a nontrivial refinement of the \lq\lq visible\rq\rq\ data such as the monodromy matrix. The standard method depends on knowing an integral formula for a specific solution, which can be expanded asymptotically in different directions. Even the computation of the monodromy matrix at a regular singular point can present difficulties, because of the possibility of resonance.  And even if these obstacles can be surmounted, there is in general no canonical way to present the Stokes data, as its construction depends on making several choices. 

However, in our situation, the above special structure of $\sto(d_i)$ allows us to show that $M^{(0)} \in \cM_{\Delta'_+}$ can be computed exactly from its conjugacy class. The conjugacy classes which occur can be described in a canonical fashion, even when $M^{(0)} \in \cM_{\Delta'_+}$ fails to be semisimple (which is a manifestation of resonance). 
Thus we are in a very favorable situation.

From now on in this section we revert to the notation
$d_1,\dots,d_{2s}$ for the singular directions (rather than the fractional index notation
$d_1,\dots,d_{2+\frac{s-1}{s}}$ of the previous section).

\subsection{Head and tail}\label{lie1}
$ $

Let $G$ be a complex simple Lie group of rank $l>1$, with Lie algebra $\fg$. Let $\fh$ be a Cartan subalgebra, with associated Weyl group $W$.
Let $\Delta_+$ be a
system of positive roots, with associated 
simple roots $\Pi=\{\alpha_1,\dots,\alpha_l\}$.

Denote by $(\cdot,\cdot)$ the bilinear form on $\fg^\ast$ corresponding to $B(\cdot,\cdot)$ on $\fg$ via the identification $f(\cdot)\leftrightarrow
B(\cdot,H_f)$. If $\alpha,\beta$ are roots and $R_\alpha$, $R_\beta\in W$ are the corresponding reflections, we have $R_{\alpha}(\beta)=\beta-2\tfrac{(\beta,\alpha)}{(\alpha,\alpha)}\alpha$.

\begin{pro}\cite{Steinberg}\label{pro:partition}
There is a partition $\{1,2,\dots,l\}=I_1\sqcup I_2$ such that all elements of $\Pi_1=\{ \alpha_i \in \Pi \st i\in I_1 \}$ are orthogonal, and all elements of $\Pi_2=\{ \alpha_i \in \Pi \st i\in I_2 \}$ are orthogonal. This partition is unique up to the labelling of $I_1,I_2$. 
\end{pro}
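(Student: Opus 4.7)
The plan is to reduce the statement to a standard fact about Dynkin diagrams. Recall that two distinct simple roots $\alpha_i, \alpha_j \in \Pi$ are orthogonal with respect to $(\cdot,\cdot)$ if and only if there is no edge connecting the corresponding nodes in the Dynkin diagram of $\fg$ (equivalently, the Cartan integer $\langle \alpha_i,\alpha_j^\ast\rangle$ vanishes). Hence finding a partition $\Pi=\Pi_1\sqcup\Pi_2$ in which all elements of each $\Pi_k$ are mutually orthogonal amounts to finding a proper $2$-coloring of the Dynkin diagram, i.e.\ to showing that this diagram is bipartite as a graph.

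The key input is that, since $\fg$ is complex simple, its Dynkin diagram is a connected tree (this is immediate from the classification $A_l,B_l,C_l,D_l,E_6,E_7,E_8,F_4,G_2$, and can also be derived directly from the positive-definiteness of the restriction of $(\cdot,\cdot)$ to the span of the simple roots, which rules out the existence of cycles). Every tree is bipartite, so a proper $2$-coloring exists; labelling one colour class $I_1$ and the other $I_2$ then yields the required partition with $\Pi_k=\{\alpha_i:i\in I_k\}$.

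For uniqueness, I would argue by connectedness: fix a simple root $\alpha_{i_0}$ and declare, say, $i_0\in I_1$. Because the Dynkin diagram is connected, the colour of any other node $i$ is forced to be the parity of the length of any path from $i_0$ to $i$ in the tree (well-defined precisely because the graph has no cycles). Thus the only ambiguity is the initial choice of colour for $\alpha_{i_0}$, which corresponds exactly to swapping the labels of $I_1$ and $I_2$.

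I do not expect any serious obstacle here: the content of Proposition \ref{pro:partition} is essentially graph-theoretic, and the only place where one uses that $\fg$ is simple (rather than semisimple) is to guarantee connectedness of the Dynkin diagram, which is needed only for uniqueness. The verification that the non-orthogonality graph on $\Pi$ coincides with the Dynkin diagram is routine from $R_\alpha(\beta)=\beta-2\tfrac{(\beta,\alpha)}{(\alpha,\alpha)}\alpha$ and the definition of simple roots.
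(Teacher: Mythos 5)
Your proof is correct and is essentially the standard argument (the one in Steinberg's cited paper): the non-orthogonality graph on $\Pi$ is the Dynkin diagram, which for simple $\fg$ is a connected tree, hence admits a proper $2$-colouring that is unique up to swapping the colours. The paper itself supplies no proof, only the citation to \cite{Steinberg}, so there is nothing further to compare.
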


Recall (e.g.\ from \cite{H}) that the product (in any order) of the $l$ simple root reflections  is called a Coxeter element (of $W$).  
We consider the Coxeter element 
\[
\textstyle
\gamma=\tau_2\tau_1,\quad
\tau_k=\prod_{\alpha_i\in \Pi_k} R_{\alpha_i}
\]
By orthogonality, the order of the products inside $\tau_1$ or $\tau_2$ does not matter.  For this reason, we have $\tau_1^2=\tau_2^2=1$.

For any $t\in W$, let 
\[
\Lambda(t)=t^{-1}\Delta_-\cap\Delta_+
\]
i.e.\  the set consisting of those positive roots which become negative upon applying $t$. It is known (\cite{Kostant85},\cite{H}) that the cardinality of $\Lambda(t)$
is the length of $t$ in $W$, which is $l$ when $t$ is a Coxeter element.

For $i=1,2,3,\dots$ let
\[
\tau^{(n)}=\tau_n\tau_{n-1}\cdots\tau_1,
\quad
\tau_i=
\begin{cases}
\tau_1\ \text{if $i$ is odd}
\\
\tau_2\ \text{if $i$ is even}
\end{cases}
\]
Let $\tau^{(-n)}=(\tau^{(n)})^{-1}$ and $\tau^{(0)}=1$.
Kostant (\cite{Kostant85}, Proposition 6.8) showed that
\begin{equation}\label{eq:union}
\Lambda(\tau^{(n)})=\tau^{(0)}\Pi_1\sqcup\cdots\sqcup
\tau^{(-(n-1))}\Pi_n,
\quad
\Pi_i=
\begin{cases}
\Pi_1\ \text{if $i$ is odd}
\\
\Pi_2\ \text{if $i$ is even}
\end{cases}
\end{equation} 
In \cite{Kostant85}, Kostant has an overall assumption that $G$ has type A-D-E and even Coxeter number, however, the proof of formula (\ref{eq:union}) does not need these assumptions.

This gives us a characterization of the partition $\Pi=\Pi_1\sqcup\Pi_2$:

\begin{pro}\label{pro:Piwithgamma}  

$ $

(i) $\Lambda(\gamma)=\Pi_1\sqcup \gamma^{-1} (-\Pi_2)$, $\Lambda(\gamma^{-1})=\gamma(-\Pi_1)\sqcup \Pi_2$

(ii) $\Pi_1=\Pi\cap \Lambda(\gamma)$, $\Pi_2 =\Pi\cap \Lambda(\gamma^{-1})$
\end{pro}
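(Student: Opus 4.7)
The plan is to derive both parts directly from Kostant's formula \eqref{eq:union} specialized to $n=2$, together with the elementary fact that $\tau_k$ acts as $-1$ on $\Pi_k$ (a consequence of the orthogonality of the elements of $\Pi_k$ built into Proposition \ref{pro:partition}).

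For (i), first I apply \eqref{eq:union} with $n=2$. Since $\tau^{(2)}=\tau_2\tau_1=\gamma$ and $\tau^{(-1)}=\tau_1^{-1}=\tau_1$ (as $\tau_1^2=1$), the formula yields $\Lambda(\gamma)=\Pi_1\sqcup\tau_1(\Pi_2)$. To rewrite the second piece in the desired form, I observe that for any $\beta\in\Pi_2$, the reflections $R_{\alpha}$ with $\alpha\in\Pi_2\setminus\{\beta\}$ fix $\beta$ (orthogonality), while $R_\beta(\beta)=-\beta$, so $\tau_2(\beta)=-\beta$. Consequently $\gamma^{-1}(-\beta)=\tau_1\tau_2(-\beta)=\tau_1(\beta)$, giving $\tau_1(\Pi_2)=\gamma^{-1}(-\Pi_2)$. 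The second identity in (i) follows by the identical argument applied to $\gamma^{-1}=\tau_1\tau_2$, swapping the roles of $(\Pi_1,\tau_1)$ and $(\Pi_2,\tau_2)$.

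For (ii), from (i) it is immediate that $\Pi_1\subseteq\Pi\cap\Lambda(\gamma)$, so I only need to verify $\gamma^{-1}(-\Pi_2)\cap\Pi=\emptyset$, i.e.\ $\tau_1(\Pi_2)\cap\Pi=\emptyset$. For $\alpha\in\Pi_2$, the formula $\tau_1(\alpha)=\alpha-\sum_{\beta\in\Pi_1}\frac{2(\alpha,\beta)}{(\beta,\beta)}\beta$ has coefficient $1$ on $\alpha$ and coefficients $-\frac{2(\alpha,\beta)}{(\beta,\beta)}\geq 0$ on the elements of $\Pi_1$ (since the pairing of distinct simple roots is non-positive). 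If $\tau_1(\alpha)$ were a simple root, all these Dynkin coefficients would have to vanish, forcing $\alpha$ to be orthogonal to every element of $\Pi_1$. This contradicts connectedness of the Dynkin diagram of $\fg$ (which holds because $\fg$ is simple and $l>1$, so $\Pi_1\neq\emptyset$ and $\alpha$ must be joined to some node in $\Pi_1$). Hence $\tau_1(\Pi_2)\cap\Pi=\emptyset$, and the second equality in (ii) follows by the symmetric argument.

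I do not anticipate a substantive obstacle: the only place where care is needed is tracking the sign conventions so that $\tau^{(-1)}\Pi_2$ in Kostant's formula is correctly identified with $\gamma^{-1}(-\Pi_2)$, and recording that $\tau_k$ acts as $-1$ on $\Pi_k$ (which is the key input that makes the subsets $\Pi_1,\Pi_2$ visible inside $\Lambda(\gamma^{\pm 1})$).
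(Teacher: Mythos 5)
Your proof of (i) is correct and follows essentially the same route as the paper: both rest on Kostant's formula (\ref{eq:union}) with $n=2$ together with the observation that $\tau_k$ acts as $-1$ on $\Pi_k$, so that $\tau_1(\Pi_2)=\gamma^{-1}(-\Pi_2)$. The only cosmetic difference is in the second identity of (i): the paper deduces it from the general fact $\Lambda(\gamma^{-1})=\gamma(-\Lambda(\gamma))$, whereas you rerun Kostant's formula with the labels of $\Pi_1,\Pi_2$ interchanged (legitimate, since the partition is unique only up to labelling); both work.

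For (ii) you actually go further than the paper, which simply asserts that (ii) ``follows'' from (i). As you correctly identify, one must still rule out $\gamma^{-1}(-\Pi_2)\cap\Pi=\tau_1(\Pi_2)\cap\Pi$ being nonempty, and your argument for this is sound: writing $\tau_1(\alpha)=\alpha-\sum_{\beta\in\Pi_1}\tfrac{2(\alpha,\beta)}{(\beta,\beta)}\beta$ for $\alpha\in\Pi_2$ (valid because the reflections in $\Pi_1$ commute and each fixes the others' roots), the coefficient of $\alpha$ is $1$, so if $\tau_1(\alpha)$ were simple it would have to equal $\alpha$, forcing $\alpha\perp\Pi_1$; combined with $\alpha\perp(\Pi_2\setminus\{\alpha\})$ this isolates the node $\alpha$ in the Dynkin diagram, contradicting simplicity of $\fg$ and $l>1$. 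So your write-up is not only correct but supplies a justification the paper leaves implicit.
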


\begin{proof}  (i) By formula (\ref{eq:union}) and the fact that $\tau_2(\Pi_2)=-\Pi_2$, we have 
$\Lambda(\gamma)=
\Lambda(\tau_2\tau_1)=
\tau^{(0)}\Pi_1\sqcup\tau^{(-1)}\Pi_2=
\Pi_1\sqcup(\tau_1)^{-1}\Pi_2=
\Pi_1\sqcup(\tau_1)^{-1}(\tau_2)^{-1}(-\Pi_2)=
\Pi_1\sqcup \gamma^{-1} (-\Pi_2)$.
Next, we have $\Lambda(\gamma^{-1})=
\gamma\Delta_-\cap\Delta_+=
\gamma(\Delta_-\cap\gamma^{-1}\Delta_+)=
\gamma(-\Delta_+\cap\gamma^{-1}(-\Delta_-))=
\gamma(-\Lambda(\gamma))$,
and this is $\gamma(-\Pi_1)\sqcup \Pi_2$ by the previous calculation.
(ii) It follows that $\Pi_1=\Pi\cap \Lambda(\gamma)$ and $\Pi_2 =\Pi\cap \Lambda(\gamma^{-1})$. 
\end{proof}

As we have seen in section \ref{eqns}, Kostant showed in 
\cite{Ko59} that the Cartan subalgebra $\fg^{z_0}$ is in apposition to the Cartan subalgebra $\fh$ with respect to the principal element $P_0$.  Corollary 8.6 of \cite{Ko59} then shows that $P_0$ is a group representative of a Coxeter transformation of $\fg^{E_+}=\fh^\prime$. Let us write
$A_\delta=P_0$, and
\[
\delta=\Ad P_0\vert_{\fh^\prime}\in W^\prime
\]
for the corresponding Coxeter element.

\begin{pro}\label{rotation}
For all $\beta\in\Delta^\prime$ we have
$\delta(\beta)(-E_+)=e^{\frac{2\pi\sqrt{-1}}{-s}}\beta(-E_+)\in\bC^*$.
\end{pro}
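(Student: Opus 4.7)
The plan is to unwind the definition of $\delta$ and the Weyl-group action, and reduce the whole statement to a one-line computation of the eigenvalue of $\Ad P_0$ on $E_+$. First, non-vanishing of $\beta(-E_+)$ is immediate: by hypothesis $\fh^\prime=\fg^{E_+}$ is a Cartan subalgebra, which just says that $E_+$ is regular semisimple, so no root $\beta\in\Delta^\prime$ can vanish on it.

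The key calculation is that $\Ad P_0(E_+)=e^{2\pi\i/s}E_+$. This follows directly from $E_+=\sum_{i=0}^{l}c^{+}_i e_{\alpha_i}$ together with the already-recorded formula $\tau(e_\alpha)=e^{2\pi\i\,\text{ht}(\alpha)/s}\,e_\alpha$: for $1\le i\le l$ the simple root $\alpha_i$ has height $1$, while $\alpha_0=-\psi$ has height $-(s-1)$, which is $\equiv 1\pmod s$. Every summand of $E_+$ is therefore scaled by the same factor $e^{2\pi\i/s}$, and in particular $\Ad P_0^{-1}(E_+)=e^{-2\pi\i/s}E_+$.

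Combining this with the standard contragredient Weyl-group action $(\delta\cdot\beta)(h)=\beta(\delta^{-1}h)$ on $(\fh^\prime)^\ast$, and noting that $-E_+\in\fh^\prime$ (since $E_+$ lies in its own centralizer $\fh^\prime$), I would conclude
\[
\delta(\beta)(-E_+)=\beta\bigl(\Ad P_0^{-1}(-E_+)\bigr)=\beta\bigl(-e^{-2\pi\i/s}E_+\bigr)=e^{-2\pi\i/s}\beta(-E_+),
\]
which is the asserted identity after rewriting $e^{-2\pi\i/s}=e^{2\pi\i/(-s)}$. I do not anticipate any real obstacle; the only content is the height-mod-$s$ observation that makes $E_+$ a genuine $\tau$-eigenvector, everything else being routine unpacking of conventions.
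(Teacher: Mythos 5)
Your proof is correct and follows essentially the same route as the paper's: both reduce the claim to the contragredient formula $\delta(\beta)(y)=\beta(\Ad(P_0^{-1})y)$ together with the eigenvalue computation $\Ad(P_0)E_+=e^{2\pi\i/s}E_+$. The only difference is in which step is spelled out --- you derive the eigenvalue via the height-mod-$s$ observation (and note the nonvanishing of $\beta(-E_+)$ from regularity of $E_+$) while taking the contragredient action as standard, whereas the paper verifies the contragredient formula from $\Ad(A_\delta)(e_\beta)=e_{\delta(\beta)}$ and cites the eigenvalue fact.
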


\begin{proof} Since $\Ad(A_\delta)(e_\beta)=e_{\delta(\beta)}$, we have
$[y,e_{\delta(\beta)}] = (\delta(\beta)(y)) e_{\delta(\beta)}
= (\delta(\beta)(y)) \Ad(A_\delta)(e_\beta)$ for all $y\in\fh^\prime$.
On the other hand
$[y,e_{\delta(\beta)}] =  [y,\Ad(A_\delta)(e_\beta)] = 
\Ad(A_\delta)[ \Ad(A_\delta^{-1})(y),e_\beta] =
\beta( \Ad(A_\delta^{-1})(y) ) \Ad(A_\delta)(e_\beta)$.
Thus 
$\delta(\beta)(y)=
\beta( \Ad(A_\delta^{-1})(y) )$ for all $y\in\fh^\prime$.
Since $A_\delta=P_0$, we have 
$\delta(\beta)(-E_+)=\beta(\Ad(P_0^{-1})(-E_+))
=e^{\frac{2\pi\sqrt{-1}}{-s}}\beta(-E_+)$ where we have used 
$\Ad(P_0)E_+=e^{\frac{2\pi\sqrt{-1}}{s}}E_+$.
\end{proof}

\begin{pro}\label{pro:P0asAgamma}
There is a choice of simple roots $\Pi'$ and partition $\Pi'=\Pi'_1\sqcup\Pi'_2$ 
(with respect to the Cartan subalgebra $\fh^\prime=\fg^{-E_+}$)
such that
$\delta=\tau_2\tau_1$, i.e.\ $\delta$ can be realized as the particular type of Coxeter element $\gamma=\tau_2\tau_1$ for $\Pi'=\Pi'_1\sqcup\Pi'_2$.
\end{pro}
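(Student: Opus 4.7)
The plan is to combine Kostant's identification of $\delta$ as a Coxeter element with the classical fact that all Coxeter elements inside a Weyl group are mutually conjugate. Corollary 8.6 of \cite{Ko59}, which was already invoked in the paragraph preceding the statement, tells us that $\delta=\Ad P_0\vert_{\fh'}$ is a Coxeter element of $W'$, the Weyl group of $\fg$ relative to $\fh'$. Separately, I would fix any initial choice of simple roots $\tilde\Pi'\subset\Delta'$ (corresponding to some auxiliary positive system $\tilde\Delta'_+$); by Proposition \ref{pro:partition}, this $\tilde\Pi'$ admits an essentially unique orthogonal bipartition $\tilde\Pi'=\tilde\Pi'_1\sqcup\tilde\Pi'_2$, so we get a bipartite Coxeter element $\tilde\gamma=\tilde\tau_2\tilde\tau_1\in W'$, where $\tilde\tau_i=\prod_{\alpha\in\tilde\Pi'_i}R_\alpha$.

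Next I would invoke the classical conjugacy theorem for Coxeter elements (see e.g.\ \cite{H}): any two Coxeter elements of $W'$ are conjugate in $W'$. Hence there exists $g\in W'$ such that $\delta=g\tilde\gamma g^{-1}$. Now transport the whole bipartite configuration by $g$: set $\Delta'_+=g\tilde\Delta'_+$, $\Pi'=g\tilde\Pi'$, and $\Pi'_i=g\tilde\Pi'_i$ for $i=1,2$. Because $W'$ acts simply transitively on Weyl chambers of $\Delta'$ and preserves the bilinear form $(\cdot,\cdot)$, the set $\Delta'_+$ is a system of positive roots for $\Delta'$, the set $\Pi'$ is its associated collection of simple roots, and $\Pi'=\Pi'_1\sqcup\Pi'_2$ remains a decomposition into two mutually orthogonal subsets. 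Defining $\tau_i=\prod_{\alpha\in\Pi'_i}R_\alpha$ and using the standard identity $R_{g\alpha}=gR_\alpha g^{-1}$, we obtain $\tau_i=g\tilde\tau_i g^{-1}$, and consequently
\[
\tau_2\tau_1=(g\tilde\tau_2 g^{-1})(g\tilde\tau_1 g^{-1})=g\tilde\gamma g^{-1}=\delta,
\]
which is the required identity.

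The genuinely nontrivial input is the conjugacy of all Coxeter elements inside $W'$, which is a well-known classical theorem and the only nonformal step; the rest amounts to checking that transporting simple roots and orthogonal partitions by an element of $W'$ preserves every structure in sight, together with the routine conjugation formula for reflections. I expect no real obstacle beyond locating a clean reference for the conjugacy statement in a form compatible with the rest of the paper's conventions.
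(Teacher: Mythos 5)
Your proof is correct, but it takes a genuinely different route from the paper's. The paper does not pass through the abstract conjugacy theorem for Coxeter elements: instead it invokes Kostant's Theorem 8.6 directly, which supplies an explicit $A\in G$ with $\Ad(A)(\fh')=\fh$ such that the transported roots $\beta_i=\alpha_{\sigma(i)}\circ\Ad(A)$ form a simple system for $\fh'$ with $\delta=R_{\beta_1}\cdots R_{\beta_l}$; ordering the $\alpha_{\sigma(i)}$ so that the first $k$ lie in $\Pi_2$ and the rest in $\Pi_1$, and using that $\Ad(A)$ preserves $(\cdot,\cdot)$ together with the uniqueness of the orthogonal bipartition (Proposition \ref{pro:partition}), it reads off $\Pi'_2=\{\beta_1,\dots,\beta_k\}$, $\Pi'_1=\{\beta_{k+1},\dots,\beta_l\}$, hence $\delta=\tau_2\tau_1$. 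Your argument replaces this with Corollary 8.6 (so $\delta$ is \emph{some} Coxeter element of $W'$) plus the classical fact that all Coxeter elements of the finite group $W'$ are conjugate, and then transports a bipartite configuration by the conjugating element $g$; every step there (preservation of simple systems and of orthogonality under $W'$, the identity $R_{g\alpha}=gR_\alpha g^{-1}$) is sound, so the Proposition as stated is proved. What the paper's construction buys, and yours does not immediately provide, is the explicit element $A$ relating $\fh$ and $\fh'$: the text after Corollary \ref{cor:skiponeray} appeals to \lq\lq the freedom in the choice of $A$\rq\rq\ to align the positive sector with $d_1,\dots,d_s$, and the proof of Theorem \ref{thm:crosssection} uses the slightly stronger fact that group representatives $n_i$ of the $R_{\beta_i}$ can be chosen with $n_1\cdots n_l=P_0$ in $G$ (not merely $\delta=\tau_2\tau_1$ in $W'$). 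With your approach the analogous freedom sits in the choice of $g$ modulo the centralizer of $\delta$, and the lift from $W'$ to $G$ would need a separate word; so your proof is a clean and more elementary derivation of the stated Proposition, but a little extra bookkeeping would be required to support the later applications in the same way the paper's explicit construction does.
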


\begin{proof} We shall construct $\Pi^\prime$ from the 
initial set of simple roots $\Pi=\{\alpha_1,\dots,\alpha_l\}$ (with
respect to $\fh$).  First we choose a permutation $\sigma$ 
so that (for some $k$)
$\alpha_{\sigma(1)},\dots,\alpha_{\sigma(k)}\in\Pi_2$ and $\alpha_{\sigma(k+1)},\dots,\alpha_{\sigma(l)}\in\Pi_1$, where
$\Pi=\Pi_1\sqcup\Pi_2$ is given by Proposition \ref{pro:partition}.  

According to Theorem 8.6 of \cite{Ko59}, one can choose $A\in G$ such that $\Ad(A)(\fh')=\fh$ and 
$\beta_1,\dots,\beta_l$
is a set of simple roots with respect to $\fh^\prime$,
where $\beta_i=\alpha_{\sigma(i)}\circ \Ad(A)$.
Then
$\epsilon_j^\prime=\Ad(A^{-1})(\epsilon_{\sigma(j)})$ 
satisfies
$\beta(\epsilon_j^\prime)=\delta_{ij}$, so
$\epsilon_1^\prime,\dots,\epsilon_l^\prime$
is the dual basis to $\beta_1,\dots,\beta_l$, and we have
$P_0=\exp(\frac{2\pi\sqrt{-1}}{s}\Ad(A)(x'_0))$
where $x'_0=\sum_{j=1}^l \epsilon'_j$.

On the other hand,  $(\beta_i,\beta_j)=(\alpha_{\sigma(i)}\circ \Ad(A),\alpha_{\sigma(j)}\circ \Ad(A))=(\alpha_{\sigma(i)},\alpha_{\sigma(j)})$, so $\beta_1,\dots,\beta_k$ are mutually orthogonal, and similarly $\beta_{k+1},\dots,\beta_l$ are mutually orthogonal.  Since the partition of $\Pi'$ with this orthogonality property is unique, we must have $\Pi'_2=\{\beta_1,\dots,\beta_k\}$ and $\Pi'_1=\{\beta_{k+1},\dots,\beta_{l}\}$.
Thus $\delta=R_{\beta_1}\dots R_{\beta_l}$ is of the form $\gamma=\tau_2\tau_1$ where $\tau_2$ is the product of the reflections in $\Pi_2'$ and $\tau_1$ is the product of the reflections in $\Pi_1'$.
\end{proof}

Now we apply this theory to the Stokes data of section \ref{data}, and in particular to the diagram of singular directions $d_1,\dots,d_{2s}$ (see Figure \ref{singulardirection} for an illustration when $s=6$).  
\begin{figure}[h]
\scalebox{0.4}{\includegraphics{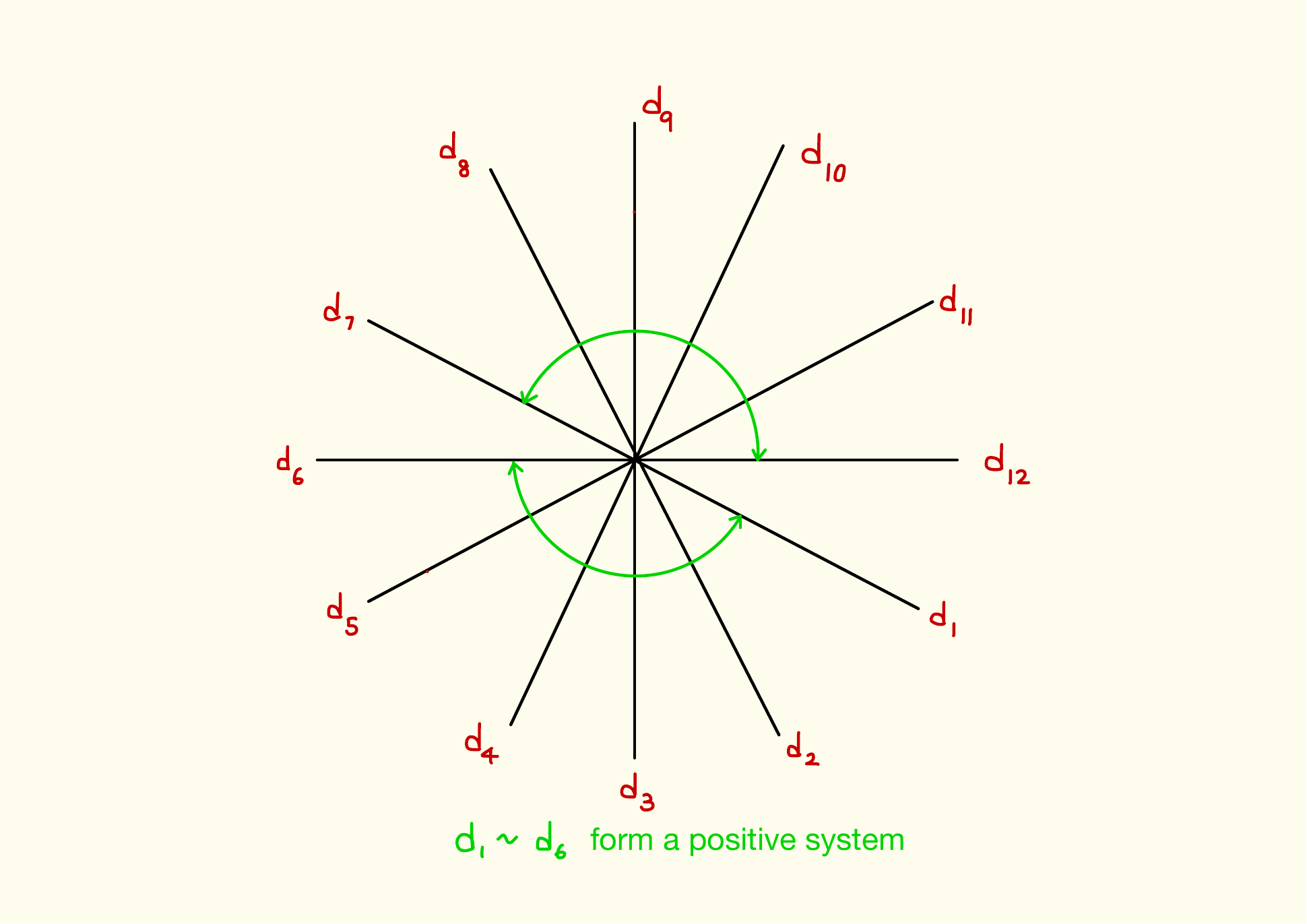}}
\caption{Singular directions  and positive system of roots}\label{singulardirection}
\end{figure}
Recall that the singular direction $d_i$ is the ray in $\bC$ at the origin which passes through the (nonzero) points $\beta(-{E}_+)$, where $\beta\in
\cR(d_i)$.

\begin{cor}\label{cor:skiponeray}
The action of $\delta$ (on the set of roots $\Delta^\prime$) moves the roots on the singular direction $d_i$ to the roots on the singular direction $d_{i+2}$ (where $i+2$ is interpreted mod $2s$).
\end{cor}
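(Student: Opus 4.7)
The corollary should follow almost immediately from Proposition \ref{rotation} together with the labelling convention for the singular directions. My plan is as follows.

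First, I would recall the setup: by Proposition \ref{rotation}, the Coxeter element $\delta$ acts on $\Delta'$ so that $\delta(\beta)(-E_+) = e^{-2\pi\sqrt{-1}/s}\,\beta(-E_+)$ for every $\beta\in\Delta'$. In other words, viewing the map $\beta\mapsto\beta(-E_+)\in\bC^\ast$, the action of $\delta$ corresponds to multiplication by $e^{-2\pi\sqrt{-1}/s}$, i.e.\ a \emph{clockwise} rotation of $\bC$ by angle $2\pi/s$ about the origin.

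Second, I would invoke Theorem \ref{thm:singulardirection}: there are exactly $2s$ equally spaced singular directions $d_1,\dots,d_{2s}$, so consecutive rays differ by the angle $\pi/s$. Together with the convention (stated at the beginning of section \ref{data}) that the labelling $d_1,\dots,d_{2s}$ proceeds clockwise, a clockwise rotation by $2\pi/s=2\cdot(\pi/s)$ sends the ray $d_i$ to the ray $d_{i+2}$, with indices read modulo $2s$.

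Finally, I would assemble these two observations: if $\beta\in\cR(d_i)$, meaning $\beta(-E_+)$ lies on the ray $d_i$, then $\delta(\beta)(-E_+) = e^{-2\pi\sqrt{-1}/s}\,\beta(-E_+)$ lies on the ray obtained by rotating $d_i$ clockwise by $2\pi/s$, which is $d_{i+2}$. Hence $\delta(\beta)\in\cR(d_{i+2})$, which is the desired conclusion.

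There is really no obstacle here beyond bookkeeping: the only thing one must be careful about is the sign/orientation convention (the minus sign in the exponent of Proposition \ref{rotation} combined with the clockwise labelling conspire to give $i\mapsto i+2$ rather than $i\mapsto i-2$). One might include a brief sentence remarking that the map $\delta:\cR(d_i)\to\cR(d_{i+2})$ is in fact a bijection, since $\delta$ is a bijection of $\Delta'$ preserving the partition $\Delta'=\bigsqcup_i \cR(d_i)$ by the same rotation argument applied to $\delta^{-1}$.
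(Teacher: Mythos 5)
Your proposal is correct and is essentially the paper's own proof: both combine Proposition \ref{rotation} (the clockwise rotation by $2\pi/s$ under $\delta$) with Theorem \ref{thm:singulardirection} (the $2s$ equally spaced directions, hence $\pi/s$ between consecutive rays) and the clockwise labelling convention to conclude $d_i\mapsto d_{i+2}$. Your added care about the sign/orientation bookkeeping and the bijectivity remark are fine but not needed beyond what the paper states.
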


\begin{proof} By Theorem \ref{thm:singulardirection}, there are $2s$ singular directions, with successive directions separated by $\pi/s$. By Proposition \ref{rotation}, (the roots on) these singular directions are
rotated clockwise by $2\pi/s$. The statement follows.
\end{proof}

Up to this point we have allowed arbitrary choices of initial singular direction $d_1$. We shall now choose $d_1$ (and $A$) so that 
\[
\Delta'_+=\cR(d_1)\sqcup\cR(d_2)\sqcup\cdots\sqcup\cR(d_s).
\]
This is always possible because of the freedom in the choice of
$A$ in the proof of Proposition \ref{pro:P0asAgamma}. From now on we refer to the sequence $d_1,\dots,d_s$ of singular directions as the positive sector. 

Proposition \ref{pro:Piwithgamma} and Corollary \ref{cor:skiponeray} immediately imply:

\begin{cor}\label{cor:twodirections} $ $

(i) $\cR(d_{1})\sqcup\cR(d_{2})= 
\Lambda(\gamma^{-1})=
\Pi'_2 \sqcup \gamma(-\Pi'_1)$, and

(ii) $\cR(d_{s-1})\sqcup\cR(d_{s})=
\Lambda(\gamma)=
\Pi'_1 \sqcup \gamma^{-1}(-\Pi'_2)$
\end{cor}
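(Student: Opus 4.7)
The plan is to combine the two cited ingredients with the chosen orientation of the positive sector. Concretely, Proposition \ref{pro:P0asAgamma} lets me identify the Coxeter element $\delta$ (of the Weyl group $W'$ associated to $\fh'$) with a Coxeter element of the form $\gamma=\tau_2\tau_1$ relative to the simple system $\Pi'=\Pi'_1\sqcup\Pi'_2$, and Corollary \ref{cor:skiponeray} says that the induced action on roots shifts the singular-direction index by $+2$ (mod $2s$). In particular, $\gamma^{-1}$ shifts the index by $-2$ (mod $2s$).

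Next I would use the normalization chosen just before the Corollary: $d_1,\dots,d_s$ is the positive sector, meaning $\Delta'_+=\cR(d_1)\sqcup\cdots\sqcup\cR(d_s)$ and consequently $\Delta'_-=\cR(d_{s+1})\sqcup\cdots\sqcup\cR(d_{2s})$. By definition, $\Lambda(\gamma)$ consists of those $\beta\in\Delta'_+$ with $\gamma(\beta)\in\Delta'_-$. Since $\gamma$ sends $\cR(d_i)$ to $\cR(d_{i+2\bmod 2s})$, for $i\in\{1,\dots,s\}$ the image lies in the negative sector precisely when $i+2\in\{s+1,\dots,2s\}$, i.e.\ when $i\in\{s-1,s\}$. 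Hence
\[
\Lambda(\gamma)=\cR(d_{s-1})\sqcup\cR(d_s),
\]
which, combined with the formula $\Lambda(\gamma)=\Pi'_1\sqcup\gamma^{-1}(-\Pi'_2)$ from Proposition \ref{pro:Piwithgamma}(i), proves (ii).

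For (i) the argument is symmetric: $\Lambda(\gamma^{-1})$ consists of $\beta\in\Delta'_+$ with $\gamma^{-1}(\beta)\in\Delta'_-$, and $\gamma^{-1}$ shifts the singular-direction index by $-2\bmod 2s$. For $i\in\{1,\dots,s\}$, the image $i-2\bmod 2s$ lies in $\{s+1,\dots,2s\}$ exactly when $i\in\{1,2\}$. Therefore
\[
\Lambda(\gamma^{-1})=\cR(d_1)\sqcup\cR(d_2),
\]
and combining with $\Lambda(\gamma^{-1})=\gamma(-\Pi'_1)\sqcup\Pi'_2$ (again from Proposition \ref{pro:Piwithgamma}(i)) gives (i).

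There is really no hard step here; the only thing that needs care is the index arithmetic modulo $2s$ together with the convention that makes $d_1,\dots,d_s$ the positive sector. Everything else is a direct substitution into the partitions $\Lambda(\gamma^{\pm 1})$ supplied by Proposition \ref{pro:Piwithgamma}. If a subtlety appears, it will be in confirming that the specific Coxeter element $\delta=\Ad P_0\vert_{\fh'}$ is matched with the \emph{same} ordered factorization $\tau_2\tau_1$ (and not its inverse) used in Proposition \ref{pro:Piwithgamma}; but this was exactly the content of Proposition \ref{pro:P0asAgamma}, so consistency is automatic.
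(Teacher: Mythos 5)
Your proof is correct and is exactly the argument the paper intends: the paper merely asserts that Proposition \ref{pro:Piwithgamma} and Corollary \ref{cor:skiponeray} ``immediately imply'' the corollary, and your index arithmetic modulo $2s$ combined with the normalization $\Delta'_+=\cR(d_1)\sqcup\cdots\sqcup\cR(d_s)$ is precisely that implication spelled out. The consistency point you flag at the end (that $\delta$ is realized as $\tau_2\tau_1$ and not its inverse) is indeed supplied by Proposition \ref{pro:P0asAgamma}, just as you say.
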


We shall now refine this to show that the roots on  $\cR(d_{1})$ 
and $\cR(d_{s})$ --- the
head and tail of the positive sector --- are precisely the simple roots:

\begin{thm}\label{thm:headandtail}
$\cR(d_1)=\Pi'_2$, $\cR(d_2)=\gamma(-\Pi'_1)$,
$\cR(d_{s-1})=\gamma^{-1}(-\Pi'_2)$, $\cR(d_s)=\Pi'_1$.
\end{thm}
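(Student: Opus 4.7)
Only one of the four equalities is independent: Corollary \ref{cor:twodirections} combined with Corollary \ref{cor:skiponeray} and the sign symmetry $\cR(d_{i+s})=-\cR(d_i)$ (valid because $d_{i+s}$ is the ray opposite to $d_i$) gives $\gamma(\cR(d_{s-1}))=\cR(d_{s+1})=-\cR(d_1)$ and $\gamma(\cR(d_s))=\cR(d_{s+2})=-\cR(d_2)$. So once $\cR(d_s)=\Pi'_1$ is established, Corollary \ref{cor:twodirections}(ii) forces $\cR(d_{s-1})=\gamma^{-1}(-\Pi'_2)$, and the displayed identities then yield $\cR(d_2)=-\gamma(\Pi'_1)=\gamma(-\Pi'_1)$ and $\cR(d_1)=-\gamma(\gamma^{-1}(-\Pi'_2))=\Pi'_2$. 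The plan is therefore to establish the single equality $\cR(d_s)=\Pi'_1$.

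For this I exploit the Coxeter-plane structure implicit in $\delta$. The identities $[x_0,E_\pm]=\pm E_\pm$, combined with $P_0=\exp(2\pi\sqrt{-1}\,x_0/s)$, show that $E_+$ and $E_-$ are eigenvectors of $\delta=\Ad P_0$ acting on $\fh'$ with eigenvalues $e^{\pm 2\pi\sqrt{-1}/s}$, corresponding to the two extreme exponents $m_1=1$ and $m_l=s-1$. Their joint real span is the Coxeter plane of \cite{Kostant10}, on which $\delta$ acts by rotation through $2\pi/s$; for each $\beta\in\Delta'$, the complex number $\beta(-E_+)$ is precisely the image of $\beta$ under the $\delta$-equivariant projection onto this plane, and its argument identifies one of the $2s$ singular directions.

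By Kostant's classical analysis of the Coxeter plane (Appendix \ref{sec:coxeterplane} and \cite{Kostant10}), the $l$ simple roots of $\Pi'$ project to exactly two rays in the Coxeter plane, with the orthogonal bipartition $\Pi'=\Pi'_1\sqcup\Pi'_2$ of Proposition \ref{pro:partition} given precisely by which of the two rays each simple root occupies. Since $\Pi'\subset\Delta'_+$, these two rays lie inside the positive sector; and since the simple roots must sit at the extremes of the positive sector (this is the geometric content of the Coxeter-plane picture, reflecting that simple roots generate the fundamental Weyl chamber), the two rays are exactly $d_1$ and $d_s$. Fixing the residual freedom in labelling the bipartition of Proposition \ref{pro:partition} so that $\Pi'_2$ is placed on $d_1$, we obtain $\Pi'_1\subseteq\cR(d_s)$, and the reverse inclusion follows from the cardinality balance $|\cR(d_{s-1})|+|\cR(d_s)|=l=|\Pi'_1|+|\gamma^{-1}(-\Pi'_2)|$ dictated by Corollary \ref{cor:twodirections}(ii).

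The main obstacle is the Coxeter-plane input: proving that the simple roots of $\Pi'$ split cleanly into two rays --- rather than scattering among several directions within the positive sector --- and identifying these two rays as $d_1$ and $d_s$. This is the substantive content of Kostant's Coxeter-plane theorem, which geometrically encodes the bipartite structure of the connected Dynkin diagram via the eigendecomposition of $\delta$ on $\fh'$. A direct verification independent of \cite{Kostant10} would proceed by computing $\beta(-E_+)$ for each $\beta\in\Pi'$ using the apposition isomorphism between $\fh$ and $\fh'$ from the proof of Proposition \ref{pro:P0asAgamma} and the characterization of the simple roots in terms of the principal TDS $(x'_0,e'_0,f'_0)$, showing that the resulting complex numbers have exactly two distinct arguments, one for each class of the bipartition.
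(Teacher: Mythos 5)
Your reduction of the four equalities to the single statement $\cR(d_s)=\Pi'_1$ is correct: the identity $\cR(d_{i+s})=-\cR(d_i)$, Corollary \ref{cor:skiponeray}, and the disjointness in Corollary \ref{cor:twodirections} do propagate one equality to the other three, and your final cardinality argument (once both inclusions $\Pi'_1\subseteq\cR(d_s)$ and $\gamma^{-1}(-\Pi'_2)\subseteq\cR(d_{s-1})$ are in hand) is sound and is essentially the same counting step the paper uses. The problem is the core claim on which everything rests: that the simple roots of $\Pi'$ project onto \emph{exactly} two rays, and that these are the extreme rays $d_1,d_s$ of the positive sector. You attribute this to ``Kostant's classical analysis'' via Appendix \ref{sec:coxeterplane} and \cite{Kostant10}, but the appendix does not contain this statement --- it only identifies the singular-direction diagram with the Coxeter Plane --- and the paper explicitly treats the head-and-tail property as the \emph{content} of Theorem \ref{thm:headandtail}, remarking at the end of the appendix that this information ``may be implicit'' in the usual Coxeter Plane but is made explicit here. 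Your own heuristic justification (``simple roots generate the fundamental Weyl chamber'') only shows that each of the two extreme rays contains \emph{at least one} simple root: a point on an extreme ray of a convex cone of angle less than $\pi$ that is a nonnegative combination of points of the cone must be a combination of points on that ray. It does not show that every simple root lands on $d_1\cup d_s$, nor that $\cR(d_1)$ and $\cR(d_s)$ contain \emph{only} simple roots. You acknowledge this yourself by sketching what ``a direct verification independent of \cite{Kostant10}'' would require, but you do not carry it out; that unexecuted sketch is precisely where the theorem lives.

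For comparison, the paper closes this gap in the opposite direction. Lemma \ref{lm:nononsimple} proves the inclusions $\cR(d_1)\subseteq\Pi'_2$ and $\cR(d_s)\subseteq\Pi'_1$: if $\beta=\sum c_i\beta_i$ ($c_i\in\bZ_{\ge0}$) has $\beta(-E_+)$ on the extreme ray $d_s$, the convexity argument above forces every contributing $\beta_j(-E_+)$ onto $d_s$; Proposition \ref{pro:Piwithgamma} places those $\beta_j$ in $\Pi'_1$, which is a mutually orthogonal set; and Lemma \ref{lm:conj2} (a root-string argument showing that a nonnegative combination of at least two mutually orthogonal simple roots is never a root) then forces $\beta$ to be a single simple root. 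The counting argument, split into the cases $s$ even and $s$ odd, upgrades these inclusions to equalities. If you want to salvage your route, you must either supply a proof of the two-ray statement (e.g.\ by adapting exactly this convexity-plus-root-string argument) or locate and quote a precise theorem from \cite{Kostant10} that asserts it; as written, the proposal assumes a statement equivalent in strength to the one being proved.
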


For this we need more ingredients. The first is:

\begin{lm}\label{lm:conj2}
Let $\{\alpha_i \st i\in I\}$ be a nonempty set of mutually orthogonal simple roots. 
If all $c_i\in \bZ_{\geq 0}$ and at least two $c_i$ are nonzero, then
$\sum_{i\in I}c_i\alpha_i$ cannot be a root.  
\end{lm}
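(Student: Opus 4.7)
I argue by contradiction: suppose $\beta = \sum_{i\in I} c_i \alpha_i$ were a root. Since all $c_i\ge 0$ with some $c_i>0$, $\beta$ is a positive root, and since at least two of the $c_i$ are nonzero it is not simple. Pick indices $j,k\in I$ with $j\ne k$ and $c_j,c_k\ge 1$. I will derive the contradiction by examining the $\alpha_j$-string through $\beta$.

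Using the hypothesis $(\alpha_i,\alpha_j)=0$ for $i\ne j$ in $I$, I compute
\[
(\beta,\alpha_j)\;=\;\sum_{i\in I} c_i(\alpha_i,\alpha_j)\;=\;c_j(\alpha_j,\alpha_j),
\]
so the Cartan integer is $\langle \beta,\alpha_j^\vee\rangle = 2(\beta,\alpha_j)/(\alpha_j,\alpha_j)=2c_j$. Writing the $\alpha_j$-string through $\beta$ as $\beta-p\alpha_j,\dots,\beta+q\alpha_j$, standard root-string theory gives integers $p,q\ge 0$ with $p-q=\langle\beta,\alpha_j^\vee\rangle=2c_j$, so in particular $p\ge 2c_j\ge 2$.

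Next I bound $p$ from above by $c_j$. For any integer $m>c_j$, the element $\beta-m\alpha_j$ has coefficient $c_j-m<0$ on $\alpha_j$ and coefficient $c_k\ge 1$ on $\alpha_k$ in the simple-root basis $\Pi$. Since every root is either a nonnegative or a nonpositive integer combination of the simple roots, $\beta-m\alpha_j$ cannot be a root; in particular it is not on the string, so $p\le c_j$. Combined with $p\ge 2c_j$ this yields $2c_j\le c_j$, i.e.\ $c_j\le 0$, contradicting $c_j\ge 1$. Hence $\beta$ is not a root.

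The one substantive point is the bound $p\le c_j$, which is precisely where the hypothesis that at least two of the $c_i$ are nonzero is used: without a second nonzero coefficient $c_k$, the expression $\beta-m\alpha_j$ could be wholly nonpositive and therefore still a negative root. Everything else is immediate from orthogonality and the fact that Cartan integers are computed as $2(\cdot,\cdot)/(\alpha_j,\alpha_j)$.
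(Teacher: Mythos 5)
Your proof is correct. Every step checks out: orthogonality gives $\langle\beta,\alpha_j^\vee\rangle=2c_j$; the unbroken $\alpha_j$-string through $\beta$ (legitimate since $\beta\ne\pm\alpha_j$, as at least two coefficients are nonzero) forces $p=q+2c_j\ge 2c_j$; and the presence of the second nonzero coefficient $c_k$ means $\beta-m\alpha_j$ has coefficients of mixed sign for $m>c_j$, hence $p\le c_j$, which is incompatible with $p\ge 2c_j\ge 2$. The route is genuinely different from the paper's. The paper first proves a descent claim --- if a nonnegative integer combination of mutually orthogonal positive roots is a root, then so is every smaller nonnegative combination, obtained by repeatedly subtracting an $\alpha'_j$ with positive inner product --- and uses it to reduce to a root of the form $\alpha_i+c\alpha_j$ with $\alpha_i,\alpha_j$ orthogonal simple roots, which is then ruled out by the root string through $\alpha_i$ in the direction $\alpha_j$ (there $p-q=0$ and $p=0$ since the difference of two simple roots is never a root). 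Your argument analyzes the string through $\beta$ itself and so dispenses with the descent step entirely, making it shorter and more self-contained; the paper's intermediate claim is slightly more general (it applies to orthogonal positive roots, not just simple ones) and is the kind of statement one might want to reuse, but for the lemma as stated your direct string computation is cleaner. Both proofs ultimately pit the Cartan integer (which wants the string to extend downward) against the sign structure of root coefficients (which forbids it), so they are close in spirit even though the mechanics differ.
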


\begin{proof} 

Let $\{\alpha_i^\prime \st i\in I^\prime\}$
be a set of positive (but not nesssarily simple) roots that are mutually orthogonal. Suppose that $\sum_{i\in I'}c'_i \alpha'_i$ is a root, where all $c'_i\in\bZ_{>0}$. Then we claim that any combination $\sum_{i\in I'}c''_i\alpha'_i$ with $0\leq c''_i\leq c'_i$ is also a root (or zero). 

To prove this we use the fact that, if $\alpha,\beta$ are any two roots with $(\alpha,\beta)>0$, then $\alpha-\beta$ is either zero or a root. Namely, if $\sum_{i\in I'}c'_i\alpha'_i$ is a root, and
$c'_j>0$, then $(\sum_{i\in I'}c'_i\alpha'_i,\alpha'_j)=c'_j|\alpha'_j|^2>0$
and $\sum_{i\in I'}c'_i\alpha'_i-\alpha'_j$ is again a root. Repeating the process we can obtain any linear combination $\sum_{i\in I'}c''_i\alpha'_i$.

Let us assume now that $\sum_{i\in I}c_i\alpha_i$ is a root, with all $c_i\in \bZ_{\geq 0}$ and at least two $c_i$ nonzero. By the assertion just proved, we obtain at least one root of the form 
$\alpha_i+c\alpha_j$ with $i,j\in I$ and $c\in \bZ_{> 0}$.
However, this contradicts the well known properties of root strings (see e.g.\ \cite{Kn02}): if $\alpha$ and $\beta$ are simple roots, the roots of the form
$\beta+c\alpha$, $c\in\bZ$, are exactly those given by 
$c=-p,-p+1,\dots,q$ for some $p,q\in \bZ_{\geq 0}$
with  $p-q=\frac{2(\alpha,\beta)}{(\alpha,\alpha)}$.  This is not possible because $\alpha_i$ and $\alpha_i+c\alpha_j$ belongs to the root string, yet $p-q=0$.
\end{proof}

\begin{lm}\label{lm:nononsimple} We have
$\cR(d_1)\subseteq \Pi_2^\prime$ and 
$\cR(d_s)\subseteq \Pi_1^\prime$.
In particular, the head and tail of the positive sector contain only simple roots.
\end{lm}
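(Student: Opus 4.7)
The plan is to argue by contradiction, using a half-plane analysis of the complex numbers $\beta(-E_+)$.

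For the first inclusion I would start with $\beta\in\cR(d_1)\setminus\Pi'_2$. Corollary~\ref{cor:twodirections}(i) forces $\beta=\gamma(-\alpha)$ for some $\alpha\in\Pi'_1$. Since $\Pi'_1$ is a set of mutually orthogonal simple roots, $\tau_1$ acts on $\alpha$ as minus the identity, so $\tau_1(-\alpha)=\alpha$. Expanding $\tau_2$ similarly, and using that Cartan integers between distinct simple roots are non-positive, I obtain
\[
\beta=\gamma(-\alpha)=\tau_2(\alpha)=\alpha+\sum_{\delta\in\Pi'_2} m_\delta\,\delta,
\qquad m_\delta=-\tfrac{2(\alpha,\delta)}{(\delta,\delta)}\in\bZ_{\ge 0}.
\]

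Next I would evaluate this identity at $-E_+$ and take imaginary parts after the normalization $\theta_1=0$, so that $\theta_i=-(i-1)\pi/s$. Corollary~\ref{cor:twodirections} says $\alpha\in\Pi'_1\subseteq\cR(d_{s-1})\sqcup\cR(d_s)$ and $\delta\in\Pi'_2\subseteq\cR(d_1)\sqcup\cR(d_2)$. The hypothesis $l>1$ gives $s\ge 3$, and consequently $\theta_{s-1}=-(s-2)\pi/s$ and $\theta_s=-(s-1)\pi/s$ both lie strictly inside $(-\pi,0)$, while $\theta_1=0$ and $\theta_2=-\pi/s$ lie in $[-\pi/s,0]$. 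Therefore
\[
\mathrm{Im}\,\alpha(-E_+)<0,\qquad \mathrm{Im}\,\delta(-E_+)\le 0,\qquad \mathrm{Im}\,\beta(-E_+)=0.
\]
Taking imaginary parts of the identity above now yields the strict inequality $0<0$, a contradiction. Hence $\cR(d_1)\subseteq\Pi'_2$.

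For $\cR(d_s)\subseteq\Pi'_1$ I would run the mirror argument. Any $\beta\in\cR(d_s)\setminus\Pi'_1$ lies in $\gamma^{-1}(-\Pi'_2)$ by Corollary~\ref{cor:twodirections}(ii), and the same calculation (now with $\gamma^{-1}=\tau_1\tau_2$) yields $\beta=\delta+\sum_{\alpha\in\Pi'_1} n_\alpha\alpha$ with $n_\alpha\ge 0$. Renormalizing so that $\theta_s=0$, the upper and lower half-plane roles swap and one again reaches a sign contradiction.

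I expect the only delicate point to be the angular book-keeping: verifying that \emph{both} possible angles for $\alpha(-E_+)$ lie strictly in the open lower half-plane (and analogously for $\delta(-E_+)$ in the closed lower half-plane) is precisely where the hypothesis $l>1$ enters through $s\ge 3$. Note that Lemma~\ref{lm:conj2} plays no role here; it will presumably be used in the following step, when the inclusions of Lemma~\ref{lm:nononsimple} are promoted to the equalities of Theorem~\ref{thm:headandtail}.
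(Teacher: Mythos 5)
Your proof is correct, but it follows a genuinely different route from the paper's. The paper expands an arbitrary $\beta\in\cR(d_s)$ as a nonnegative integer combination $\sum c_j\beta_j$ of the simple roots of $\Pi'$, uses the plane-geometry fact that the positive sector spans an angle $(s-1)\pi/s<\pi$ to force every $\beta_j(-E_+)$ with $c_j\neq 0$ onto the same boundary ray $d_s$ (hence $\beta_j\in\Pi'\cap\Lambda(\gamma)=\Pi'_1$ by Proposition \ref{pro:Piwithgamma}), and then invokes Lemma \ref{lm:conj2} to rule out a root being a combination of two or more mutually orthogonal simple roots. You instead start from the dichotomy of Corollary \ref{cor:twodirections}, observe that a hypothetical non-simple $\beta\in\cR(d_1)$ must equal $\gamma(-\alpha)=\tau_2(\alpha)=\alpha+\sum_{\delta\in\Pi'_2}m_\delta\delta$ with $m_\delta\ge 0$, and derive a contradiction by comparing imaginary parts of the evaluations at $-E_+$; your angular bookkeeping is sound (the strictness comes from $\theta_{s-1},\theta_s\in(-\pi,0)$, which indeed uses $s\ge 3$, i.e.\ $l>1$), and the mirror argument for $\cR(d_s)$ goes through after rotating $d_s$ onto the positive real axis. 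What each approach buys: the paper's argument is a generic convexity-plus-root-strings argument that does not need the explicit form of $\gamma$ acting on $\Pi'_1$, whereas yours exploits the identity $\gamma(-\Pi'_1)=\tau_2(\Pi'_1)$ and the known angular locations of $\Pi'_1$ and $\Pi'_2$ to get a shorter, self-contained contradiction that avoids Lemma \ref{lm:conj2} entirely. One small correction to your closing remark: Lemma \ref{lm:conj2} is \emph{not} used in the promotion to Theorem \ref{thm:headandtail} (that step is a pure cardinality count); in the paper it is used only inside the proof of the present lemma, so under your argument it would become superfluous.
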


\begin{proof} Let $\Pi^\prime=\{\beta_1,\dots,\beta_l\}$.
Let $\beta=\sum_{i=1}^l c_i\beta_i$, $c_i\in\bZ_{\geq 0}\in\Delta^\prime$. If $\beta(-E_+)$ falls on the ray $d_s$, then plane geometry implies that $\beta_j(-E_+)$ also falls on the ray $d_s$ whenever $c_j\neq 0$.  
Then $\beta_j \in \Pi^\prime\cap \cR(d_s)
\subseteq \Pi^\prime\cap (\cR(d_s)\cup \cR(d_{s-1}))
= \Pi^\prime\cap \Lambda(\gamma)$ and this is $\Pi_1^\prime$
by Proposition \ref{pro:Piwithgamma}.
As the roots in $\Pi'_1$ commute,  
this contradicts Lemma \ref{lm:conj2} unless 
$\beta=\beta_j$ for some $j$. Thus $\beta\in\Pi_1^\prime$.
Similarly, if  $\beta$ is a positive root such that $\beta(-E_+)$ falls on the ray $d_1$, then $\beta\in\Pi_2^\prime$.
\end{proof}

Now we can give the proof Theorem \ref{thm:headandtail}  

\begin{proof}[Proof of Theorem \ref{thm:headandtail}]
It suffices to prove that
$\cR(d_1)=\Pi'_2$ and $\cR(d_s)=\Pi'_1$, as the remaining formulae follow from this and
Corollary \ref{cor:twodirections}.

We shall denote by $\Card X$ the cardinality of a (finite) set $X$.
As roots come in pairs, $\Card\Delta'=sl$ must be even. We consider separately the cases (1) $s$ even ($l$ odd or even), (2) $s$ odd ($l$ even).

\noindent Case 1: $s$ is even.

By Theorem \ref{thm:singulardirection} there are $2s$ singular directions, hence any two consecutive singular directions account for $l$ roots. In particular
\[
\Card\cR(d_1)+\Card\cR(d_2)=l,\quad \Card\cR(d_1)\cdot\tfrac{s}{2}+\Card\cR(d_2)\cdot\tfrac{s}{2}=\tfrac{sl}{2},
\]
the last expression being the number of roots in the positive sector.
Moreover,  $\Card\cR(d_s)=\Card\cR(d_2)$ because the Coxeter element $\gamma$ move the roots supported by $d_i$ to those supported by $d_{i+2}$.
By Lemma \ref{lm:nononsimple} we have
\begin{gather*}
\cR(d_s)\subseteq \Pi'_1\Rightarrow \Card\cR(d_s)\leq \Card\Pi'_1
\\
\cR(d_1)\subseteq\Pi'_2\Rightarrow \Card\cR(d_1)\leq \Card\Pi'_2,
\end{gather*}
which gives
\begin{align*}
l=\Card\Pi'_1+\Card\Pi'_2&\geq \Card\cR(d_s)+\Card\cR(d_1)
\\
&=\Card\cR(d_2)+\Card\cR(d_1)=l.
\end{align*}
Thus
$\Card\Pi'_1=\Card\cR(d_s)=\Card\cR(d_2)$, $\Card\Pi'_2=\Card\cR(d_1)$.  It follows that
$\Pi'_1=\cR(d_s)$ and $\Pi'_2=\cR(d_1)$, as required.

\noindent Case 2: $s$ is odd.

As in Case 1, we have
\[
\Card\cR(d_1)+\Card\cR(d_2)=l,\quad \Card\cR(d_1)\cdot\tfrac{s+1}{2}+\Card\cR(d_2)\cdot\tfrac{s-1}{2}=\tfrac{sl}{2},
\]
which implies $\Card\cR(d_1)=\Card\cR(d_2)$.
Moreover,  $\Card\cR(d_s)=\Card\cR(d_1)$ because 
$\gamma^{\frac12(s+1)}\Card\cR(d_1)=\Card\cR(d_s)$.

By Lemma \ref{lm:nononsimple} we have
\begin{gather*}
\cR(d_s)\subset \Pi'_1\Rightarrow \Card\cR(d_s)\leq \Card\Pi'_1 
\\
\cR(d_1)\subset\Pi'_2\Rightarrow \Card\cR(d_1)\leq \Card\Pi'_2,
\end{gather*}
which implies that
\begin{align*}
l&=\Card\Pi'_1+\Card\Pi'_2\geq \Card\cR(d_s)+\Card\cR(d_1)
\\
&=\Card\cR(d_1)+\Card\cR(d_1)=\Card\cR(d_2)+\Card\cR(d_1)=l.
\end{align*}
Thus
$\Card\Pi'_1=\Card\cR(d_s)=\Card\cR(d_1)=\Card\cR(d_2)$, $\Card\Pi'_2=\Card\cR(d_1)$.

\noindent Thus,
$\Pi'_1=\cR(d_s)$ and $\Pi'_2=\cR(d_1)$, as required. In this case
$\Card\cR(d_i)=l/2$
for all $i$.
\end{proof}

\begin{cor}
The roots associated to any singular direction are mutually orthogonal, and are given by
\begin{eqnarray*}
&& \cR(d_1)=\Pi'_2,~\cR(d_3)=\gamma(\Pi'_2),~\dots,~\cR(d_{2s-1})=\gamma^{s-1}(\Pi'_2),\\
&& \cR(d_2)=\gamma(-\Pi'_1),~\cR(d_4)=\gamma^2(-\Pi'_1),~\dots,~\cR(d_{2s})=\gamma^s(-\Pi'_1)=-\Pi'_1.
\end{eqnarray*}
\end{cor}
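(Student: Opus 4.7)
The plan is to deduce both parts of the corollary directly from Theorem~\ref{thm:headandtail} (which pins down the four sets $\cR(d_1),\cR(d_2),\cR(d_{s-1}),\cR(d_s)$) together with Corollary~\ref{cor:skiponeray} (which says the Coxeter element $\gamma=\delta$ carries $\cR(d_i)$ bijectively onto $\cR(d_{i+2})$, indices taken mod $2s$). The formulas then fall out by a bookkeeping iteration, and mutual orthogonality reduces to the fact that $\gamma\in W'$ acts by isometries of the bilinear form on $(\fh')^\ast$.

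First I would organise the singular directions into two families by parity of index. Starting from the base case $\cR(d_1)=\Pi'_2$, repeated application of $\gamma$ gives
\[
\cR(d_{2k+1})=\gamma^k(\Pi'_2),\qquad k=0,1,\dots,s-1,
\]
and starting from $\cR(d_2)=\gamma(-\Pi'_1)$ it gives
\[
\cR(d_{2k+2})=\gamma^{k+1}(-\Pi'_1),\qquad k=0,1,\dots,s-1.
\]
For the last even index I would invoke the standard fact that a Coxeter element has order equal to the Coxeter number $s$, so that $\gamma^s=1$ and hence $\cR(d_{2s})=\gamma^s(-\Pi'_1)=-\Pi'_1$, matching the displayed formula. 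The two remaining base values $\cR(d_{s-1})=\gamma^{-1}(-\Pi'_2)$ and $\cR(d_s)=\Pi'_1$ from Theorem~\ref{thm:headandtail} are automatically consistent with the iteration, since they coincide with the values already produced on the appropriate even or odd subsequence; nothing further need be checked.

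For the orthogonality statement I would argue as follows. By Proposition~\ref{pro:partition}, both $\Pi'_1$ and $\Pi'_2$ are sets of mutually orthogonal roots with respect to $(\cdot,\cdot)$. Since $\gamma$ lies in the Weyl group $W'$ of $\fh'$ and every Weyl group element is an isometry of the induced bilinear form on roots, each of the sets $\gamma^k(\Pi'_2)$ and $\gamma^k(-\Pi'_1)$ is again mutually orthogonal. Combined with the formulas from the previous paragraph, this gives the first assertion of the corollary.

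I do not expect any serious obstacle: both parts are immediate consequences of results already proved above, and the only step requiring care is the index accounting in the iteration. The single external fact needed, namely $\gamma^s=1$, is the classical statement that the order of a Coxeter element equals the Coxeter number; I would simply cite this rather than reprove it.
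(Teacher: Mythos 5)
Your proposal is correct and follows essentially the same route as the paper: iterate Corollary \ref{cor:skiponeray} from the base values $\cR(d_1)=\Pi'_2$ and $\cR(d_2)=\gamma(-\Pi'_1)$ supplied by Theorem \ref{thm:headandtail}, and obtain orthogonality from the $W'$-invariance of $(\cdot,\cdot)$ together with the mutual orthogonality of $\Pi'_1$ and $\Pi'_2$ from Proposition \ref{pro:partition}. The only addition is your explicit appeal to $\gamma^s=1$ for the final equality, which the paper leaves implicit.
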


%The relation between the roots, the action of Coxeter element, and singular directions is illustrated in Figure \ref{Roots}.

\begin{figure}[h]
\scalebox{0.4}{\includegraphics{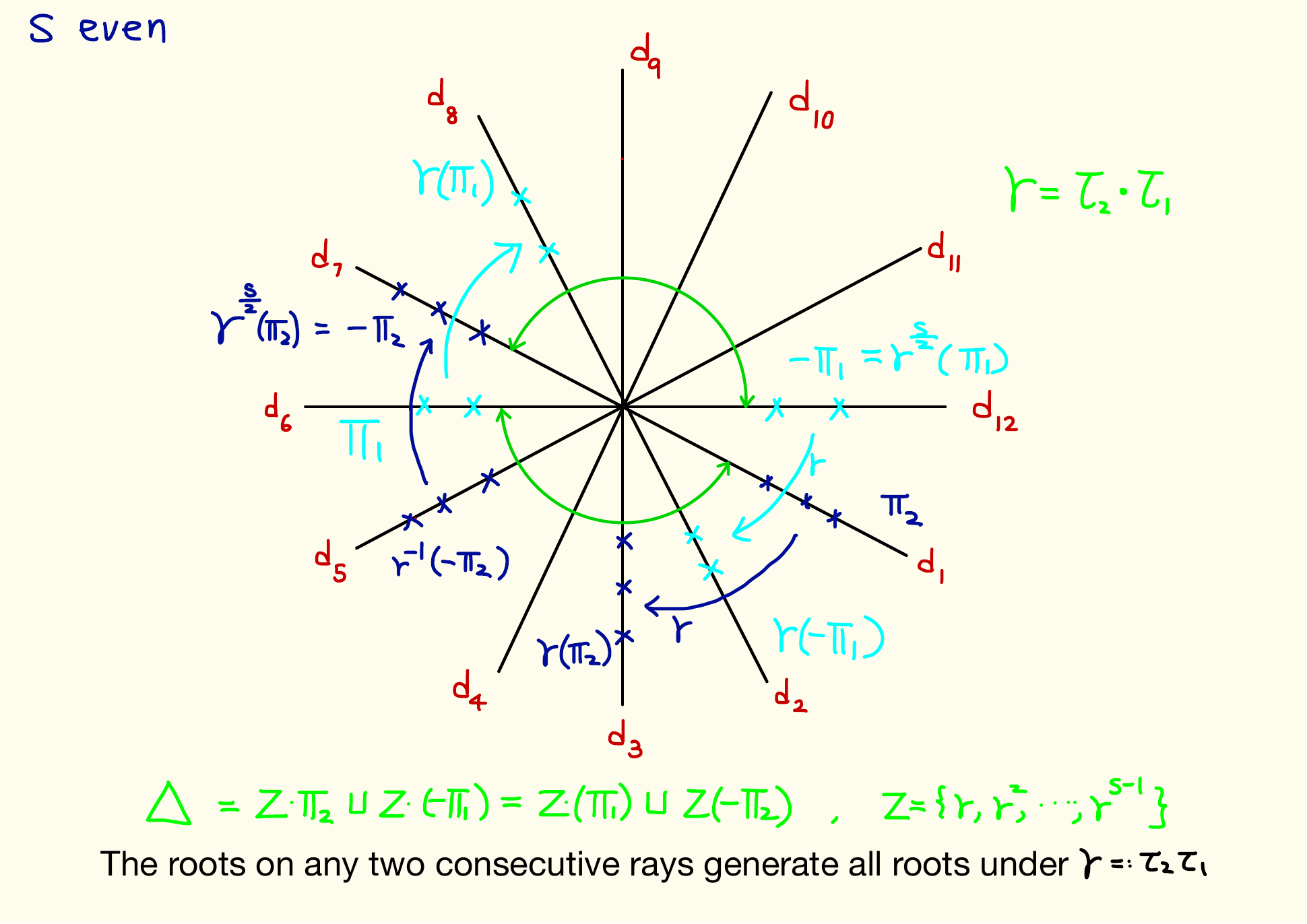}}
\caption{Simple roots and action of Coxeter element}\label{Roots}
\end{figure}

\begin{proof} The expressions for the $\cR(d_i)$ follow immediately from Corollary \ref{cor:skiponeray}  and Theorem \ref{thm:headandtail}.  
The orthogonality statement follows from the fact that $(\cdot,\cdot)$ is invariant under the reflections $R_\beta$ and hence under the action of $\gamma$. 
\end{proof}

\subsection{Steinberg cross-section}\label{lie2}
$ $

Recall (e.g.\ \cite{S}) that an element of $G$ is said to be regular if its centralizer in $G$ has dimension $l$.  We denote by $G^{reg}$ the subspace of $G$ consisting of regular elements.  
To study the conjugacy classes of $G$, Steinberg introduced the map
\[
\chi: G \to \bC^l, \quad g\mapsto (\chi_1,\dots,\chi_l)
\]
where the $\chi_i$ are the characters of the basic irreducible representations.

\begin{pro}\cite{S}(Theorem 4, page 120)\label{pro:S2}
Assume that $G$ is simply connected.
For any choice  of an ordered set of simple roots $\simple=(\alpha_1,\dots,\alpha_l)$, the map
\[
C^\simple:  (t_1,\dots,t_l)\in \bC^l \mapsto E_1(t_1)n_1\dots E_l(t_l)n_l\in G
\]
is a cross-section of $\chi|_{G^{reg}}$, where
$E_i(t_i)=\exp(t_i e_{\alpha_i})\in U_{\alpha_i}$ and the $n_i$ are group representatives of the Weyl group elements $R_{\alpha_i}$.
\end{pro}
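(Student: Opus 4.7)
The plan is to verify the two defining properties of a Steinberg cross-section: (a) the image $C^\simple(\bC^l)$ lies in $G^{\mathrm{reg}}$, and (b) each regular conjugacy class in $G$ meets this image in exactly one point. Since for simply connected $G$ the fundamental characters $\chi_1,\dots,\chi_l$ generate the ring of class functions and separate regular semisimple classes, (b) is equivalent to the polynomial map $\chi\circ C^\simple:\bC^l\to\bC^l$ being a bijection (equivalently a polynomial isomorphism of affine spaces).

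For (a) I would first reorganize $C^\simple(t)=E_1(t_1)n_1\cdots E_l(t_l)n_l$ in the form $U(t)\cdot n_c$, where $n_c=n_1\cdots n_l$ represents the Coxeter element $c=R_{\alpha_1}\cdots R_{\alpha_l}$ and $U(t)=\prod_{i=1}^{l}\mathrm{Ad}(n_1\cdots n_{i-1})\bigl(E_i(t_i)\bigr)$. Each conjugate $\mathrm{Ad}(n_1\cdots n_{i-1})(e_{\alpha_i})$ is a nonzero scalar multiple of $e_{\beta_i}$ with $\beta_i=R_{\alpha_1}\cdots R_{\alpha_{i-1}}(\alpha_i)$, and it is standard that $\{\beta_1,\dots,\beta_l\}$ are precisely the positive roots made negative by $c^{-1}$, hence linearly independent. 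Thus $U(t)$ parametrises an $l$-dimensional subvariety of a unipotent group. At $t=0$, $C^\simple(0)=n_c$; such Coxeter representatives are well known to be regular (the Coxeter element acts on $\fh$ without fixed vectors, so its centralizer in $G$ is a torus of dimension $l$). To propagate regularity to all $t$, I would exploit the weighted $\bC^\ast$-action $\lambda\cdot t_i=\lambda^{\mathrm{ht}(\beta_i)}t_i$, under which conjugation by $P_0^\lambda$ realises $C^\simple(\lambda\cdot t)$ as $\mathrm{Ad}(P_0^\lambda)C^\simple(t)$. As $\lambda\to 0$ the image contracts to $n_c\in G^{\mathrm{reg}}$, and regularity is an open condition invariant under conjugation, so the entire image stays in $G^{\mathrm{reg}}$.

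The main obstacle is (b), the bijectivity of $\chi\circ C^\simple$. My approach follows Steinberg: use the same grading (assigning $t_i$ the weight $\mathrm{ht}(\beta_i)$, so that $\chi\circ C^\simple$ becomes weighted-homogeneous, with $\chi_j\circ C^\simple$ of weight $m_j+1$ where $m_j$ is the $j$th exponent of $G$). The crucial structural computation is to show that on a suitable weight basis of the fundamental representation $V_j$ (adapted to the action of $c$, whose existence uses simple-connectedness to guarantee $V_j$ is a genuine representation of $G$ with highest weight $\omega_j$), the polynomial $\chi_j\circ C^\simple$ takes the \emph{triangular} form $t_j$ plus a polynomial in $\{t_i:\mathrm{ht}(\beta_i)<\mathrm{ht}(\beta_j)\}$, up to a nonzero scalar and after a suitable reordering of the simple roots. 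Because $\sum_j(m_j+1)=\dim\fg-l$ matches the total graded dimension on both sides, this triangularity upgrades to a polynomial automorphism by a Jacobian / graded Nakayama argument. The delicate point---and the step I expect to be hardest---is tracking how the alternating sequence of unipotent factors $E_i(t_i)$ and Weyl group lifts $n_i$ pushes the highest weight vector $v_{\omega_j}$ through the weight lattice of $V_j$, so that the lowest-weight monomial in each $\chi_j$ is identified precisely with $t_j$.
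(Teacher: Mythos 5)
The paper does not prove this proposition at all: it is imported verbatim from Steinberg (\cite{S}, Theorem 4, p.~120), so your attempt is really being measured against Steinberg's original argument. Your overall plan --- (a) the image of $C^\simple$ lies in $G^{\text{reg}}$, (b) $\chi\circ C^\simple$ is a polynomial automorphism of $\bC^l$ by a triangularity property of the fundamental characters --- is indeed the skeleton of that argument, but both halves have genuine gaps.

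The contraction argument for (a) does not work. Conjugating $C^\simple(t)=E_1(t_1)n_1\cdots E_l(t_l)n_l$ by a torus element $h_\lambda=\lambda^{x_0}$ rescales each $E_i(t_i)$ as you want, but it does not fix the Weyl representatives: $h_\lambda n_i h_\lambda^{-1}=n_i\,\lambda^{-\alpha_i^\vee}$, and pushing these factors to the right leaves a nontrivial torus element at the end of the word. So $\Ad(h_\lambda)C^\simple(t)$ is \emph{not} of the form $C^\simple(t')$, and indeed it cannot be: already for $SL_2\bC$ the trace of $C^\simple(t)$ is $-t$, so $C^\simple(\lambda t)$ and $C^\simple(t)$ lie in \emph{different} conjugacy classes --- which is exactly what being a cross-section demands. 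No nontrivial torus conjugation preserves $\cR^\simple$ (that would force $R_{\alpha_i}(h)=h$ for all $i$), so there is no repair along these lines; Steinberg instead proves regularity by showing that $\cR^\simple$ lies in the Bruhat cell of a Coxeter element and that every element of such a cell lies in only finitely many Borel subgroups, his criterion for regularity. For (b), the triangular form $\chi_j\circ C^\simple=\pm t_j+p_j(\text{earlier }t_i)$ is precisely the content of Steinberg's key lemma and is exactly the step you leave unproved; note also that genuine triangularity already gives invertibility by back-substitution, so no graded dimension count is needed (and the count you quote is off: $\sum_j(m_j+1)=\vert\Delta_+\vert+l=(\dim\fg+l)/2$, not $\dim\fg-l$). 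Finally, reducing the cross-section property to bijectivity of $\chi\circ C^\simple$ requires that each fiber of $\chi$ contain exactly one \emph{regular} class (Steinberg's Theorem 3), not merely that the fundamental characters separate regular semisimple classes; otherwise the non-semisimple regular classes, such as the regular unipotent class, are not accounted for.
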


Thus $\bC^l$  parametrizes the regular conjugacy classes, and $C^\simple$ gives a particular choice of representatives. We refer to $C^\simple$ as a {\em Steinberg cross-section.}

\begin{df}\label{def:rpi}
$
\cR^\simple=\{  E_1(t_1)n_1\dots E_l(t_l)n_l  \ \vert\ (t_1,\dots,t_l)\in \bC^l \}
$
\end{df}

In the previous sub-section we have chosen a system of positive roots $\Delta_+^\prime$, and a partition $\Pi'=\Pi'_1\sqcup\Pi'_2$ of the corresponding simple roots, with respect to which the space of
\lq\lq abstract Stokes data\rq\rq\ for the tt*-Toda equations has the following convenient description:
\[
\cM_{\Delta'_+}=
\sto(d_1) \sto(d_2) P_0
\]
where
\[
\sto(d_1)=\overset{\scriptstyle }{\underset{\beta\in\Pi_2^\prime}\Pi} U_\beta,
\quad
\sto(d_2)=\overset{\scriptstyle }{\underset{\beta\in
\gamma(-\Pi_1^\prime)
}\Pi} U_\beta.
\]

The main result of this section is:

\begin{thm}\label{thm:crosssection}
Let $\simple=(\beta_1,\dots,\beta_l)$, where $\beta_1,\dots,\beta_k$ is any ordering of $\Pi'_2$, and $\beta_{k+1},\dots,\beta_l$ is any ordering of $\Pi'_1$. Then
$\cM_{\Delta'_+}=\cR^\simple.$
\end{thm}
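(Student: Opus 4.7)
The plan is to expand a general element of $\cR^\simple$ by pushing all the $n_i$ to the right, and to verify that the resulting factors land in $\sto(d_1)$, $\sto(d_2)$, and $P_0$ respectively, using the orthogonality structure of $\Pi' = \Pi'_2 \sqcup \Pi'_1$.

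First, I would choose representatives $n_i$ of the Weyl reflections $R_{\beta_i}$ in $G$ so that $n_1 n_2 \cdots n_l = P_0$. This is possible because, by Proposition \ref{pro:P0asAgamma}, $P_0$ represents the Coxeter element $\delta = \gamma = R_{\beta_1} \cdots R_{\beta_l}$ in $W'$, so $P_0$ and any initial product of representatives differ by an element of the Cartan subgroup corresponding to $\fh'$; this can be absorbed into $n_l$, yielding a valid Steinberg cross-section $\cR^\simple$ by Proposition \ref{pro:S2}.

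Second, using the identity $g \cdot n = n \cdot (\Ad(n^{-1}) g)$ applied repeatedly, I would rewrite
\[
E_1(t_1) n_1 E_2(t_2) n_2 \cdots E_l(t_l) n_l = \tilde E_1 \tilde E_2 \cdots \tilde E_l \cdot (n_1 n_2 \cdots n_l) = \tilde E_1 \cdots \tilde E_l \cdot P_0,
\]
where $\tilde E_i = \Ad(n_1 \cdots n_{i-1})(E_i(t_i)) \in U_{\gamma_i}$ with $\gamma_i = R_{\beta_1} \cdots R_{\beta_{i-1}}(\beta_i)$. The key combinatorial step is to compute $\gamma_i$. For $i \le k$ (so $\beta_i \in \Pi'_2$), the reflections $R_{\beta_1},\dots,R_{\beta_{i-1}}$ are in roots of $\Pi'_2$ orthogonal to $\beta_i$, so they fix it, giving $\gamma_i = \beta_i \in \Pi'_2 = \cR(d_1)$. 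For $i > k$ (so $\beta_i \in \Pi'_1$), the reflections $R_{\beta_{k+1}},\dots,R_{\beta_{i-1}}$ in $\Pi'_1$ again fix $\beta_i$ by orthogonality, so $\gamma_i = \tau_2(\beta_i)$; since $\tau_1(\beta_i) = -\beta_i$ (orthogonality within $\Pi'_1$), we get $\tau_2(\beta_i) = \tau_2 \tau_1(-\beta_i) = \gamma(-\beta_i)$, hence $\gamma_i \in \gamma(-\Pi'_1) = \cR(d_2)$.

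To finish, the factors $\tilde E_1,\dots,\tilde E_k$ lie in $U_\beta$ for $\beta \in \Pi'_2$, and these commute pairwise because pairwise sums of distinct orthogonal simple roots are not roots (Lemma \ref{lm:conj2}); hence their product lies in $\sto(d_1)$. Analogously $\tilde E_{k+1} \cdots \tilde E_l \in \sto(d_2)$, since $\gamma(-\beta_i) + \gamma(-\beta_j) = \gamma(-\beta_i - \beta_j)$ is not a root by the same lemma. This gives $\cR^\simple \subseteq \cM_{\Delta'_+}$. For the reverse inclusion, each map $t_i \mapsto \tilde E_i$ is an isomorphism $\bC \to U_{\gamma_i}$, and within each commuting block the combined map is a bijection onto $\prod U_\beta$; hence every element of $\sto(d_1)\sto(d_2)P_0$ arises as some $E_1(t_1)n_1\cdots E_l(t_l)n_l$. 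The main obstacle is the combinatorial verification that $\gamma_i$ lies in the correct $\cR(d_j)$, which crucially depends on placing $\Pi'_2$ first in the ordering $\simple$ and on the pairwise orthogonality within each part of $\Pi'$.
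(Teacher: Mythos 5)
Your proposal is correct and follows essentially the same route as the paper: push the representatives $n_i$ to the right, use the pairwise orthogonality within $\Pi'_2$ and $\Pi'_1$ to compute the conjugated roots (obtaining $\beta_i$ for the first block and $\gamma(-\beta_i)$ for the second, exactly as in the paper's computation $R_{\beta_1}\cdots R_{\beta_k}(\beta_{k+i})=\gamma(-\beta_{k+i})$), and choose the $n_i$ so that $n_1\cdots n_l=P_0$ via Proposition \ref{pro:P0asAgamma}. Your explicit remark that a torus discrepancy can be absorbed into $n_l$ without changing $\cR^\simple$ is a small but welcome extra detail that the paper leaves implicit.
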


\begin{proof} First we note that 
\[
n_i\exp(te_\alpha)n_i^{-1}=\exp(\Ad(n_i)te_\alpha)=
\exp (te_\beta)
\]
where $\beta=R_{\alpha_i}(\alpha)=\alpha-2\tfrac{(\alpha,\alpha_i)}{(\alpha_i,\alpha_i)}\alpha_i$.
Since the roots in each $\Pi'_i$ are mutually orthogonal, we see immediately that
\begin{eqnarray*}
E_1(t_1)n_1\cdots E_k(t_k)n_k&=&E_1(t_1)\cdots E_{k}(t_k)~n_1\cdots n_k, \\
E_{k+1}(t_{k+1})n_{k+1}\cdots E_l(t_l)n_l &=& E_{k+1}(t_{k+1})\cdots E_{l}(t_{l})~ n_{k+1}\cdots n_l.
\end{eqnarray*}
Next we wish to evaluate
\[
n_1\cdots n_k \ 
\Pi_{i=1}^{l-k} E_{k+i}(t_{k+i})
=n_1\cdots n_k 
\left(
\Pi_{i=1}^{l-k} E_{k+i}(t_{k+i})
\right)
(n_1\cdots n_k)^{-1}(n_1\cdots n_k).
\]
Using the orthogonality property again we obtain, for $i=1,\dots,l-k$,
\begin{align*}
 R_{\beta_1}\cdots R_{\beta_k}(\beta_{k+i})&=R_{\beta_1}\cdots R_{\beta_{k+i-1}} (\beta_{k+i})
 \\
 &=R_{\beta_1}\cdots R_{\beta_{k+i}} (-\beta_{k+i})
 \\
 &=R_{\beta_1}\cdots R_{\beta_l} (-\beta_{k+i})=\gamma(-\beta_{k+i}),
\end{align*}
where $\gamma=\tau_2\tau_1=(R_{\beta_1}\cdots R_{\beta_k})(R_{\beta_{k+1}}\cdots R_{\beta_l})$.
It follows that
\[
n_1\cdots n_k 
E_{k+i}(t_{k+i})
(n_1\cdots n_k)^{-1}
=
\exp( t_{k+i} e_{
\gamma(-\beta_{k+i})
}).
\]
Hence
\begin{align*}
C^\simple(t_1,\dots,t_l)&=
E_1(t_1)\cdots E_{k}(t_k)~n_1\cdots n_k ~E_{k+1}(t_{k+1})\cdots E_{l}(t_{l})~ n_{k+1}\cdots n_l 
\\
&= E_1(t_1)\cdots E_{k}(t_k)
\exp ( t_{k+1} e_{\gamma(-\beta_{k+1})})
\cdots 
\exp ( t_l e_{\gamma(-\beta_{l})})
~A_\gamma
\end{align*}
where $A_\gamma=~n_1\cdots n_l$ is a group representative of the Coxeter element $\gamma$.  By Proposition \ref{pro:P0asAgamma}, we know that we can choose $n_i$ so that $n_1\cdots n_k n_{k+1}\cdots n_l=P_0$.  Hence
$C^\simple(t_1,\dots,t_l)$
can be expressed as a general element of
$\cM_{\Delta^\prime_+}$.
This completes the proof.
\end{proof}

\section{Stokes data for local solutions near zero}\label{local}

In sections \ref{data} and \ref{lie} we have described Lie-theoretically the space $\cM_{\Delta^\prime_+}$ of
\lq\lq abstract Stokes data\rq\rq\ for the tt*-Toda equations.  Any (local) solution $w$ of the tt*-Toda equations (\ref{ttt}) gives an isomonodromic deformation of the meromorphic connection form $\hb$ of (\ref{betahat}),  and we have shown that the Stokes data corresponding to such a solution can be described by a single Lie group element $M^{(0)}\in\cM_{\Delta^\prime_+}$.  From now on we denote the Lie group element corresponding to $w$ by $M^{(0)}_{\,w}$.

In this section (Theorem \ref{explicitQQPi}) we compute $M^{(0)}_{\,w}$  for any solution $w$ which is defined on a punctured disk of the form
$\{ z\in\mathbb C \st 0 < \vert z\vert < \epsilon_w \}$ and which has a logarithmic singularity at zero.  We refer to such solutions as \lq\lq local solutions near zero\rq\rq.  We shall compute $M^{(0)}_{\,w}$ in terms of the asymptotics of $w$ at zero, which is the data naturally associated to $w$.

In addition to this, we shall show (Theorem \ref{convexset}) that such 
$M^{(0)}_{\,w}$ correspond naturally to the points of a convex polytope, essentially the fundamental Weyl alcove of the compact real form $G_{\text{cpt}}$ of $G$.  This is of some interest, because $G_{\text{cpt}}$ does not appear in the description of the tt*-Toda equations (or the flat connection forms $\alpha,\hat\alpha$, which have noncompact monodromy groups). It is a special property of the particular type of solutions that we are considering.

We begin by sketching the construction of local solutions near zero which was given in section 2 of \cite{GIL3} for the case $G=SL_n\bC$, but this time for any complex simple simply-connected Lie group $G$. To simplify notation, we set all the numbers $c^\pm_i$ and all the coefficients $k_i$ in the tt*-Toda equation (\ref{ttt}) equal to $1$ from now on --- this suffices for the discussion of solutions, as the case of general $k_i>0$ reduces to this case on rescaling $w$.

First we consider a new connection form
\begin{equation}\label{omega}
\omega =\tfrac1\lambda \eta_- dz,
\quad
\eta_-=\sum_{i=0}^l p_i e_{-\alpha_i},
\end{equation}
where $p_i=c_i z^{k_i}$ is a monomial in $z\in\bC^\ast$.  Here we assume that $c_i>0,k_i>-1$ and $p_i=p_{\nu(i)}$ for all $i$. Let us introduce the notation
\[
c=c_0^{q_0}\dots c_l^{q_l},
\quad
N=s+ \sum_{i=0}^l q_i k_i
\]
where $q_0=1$, $\psi=\sum_{i=1}^l q_i \alpha_i$, and $s=1+\sum_{i=1}^l q_i$, as usual.

As $k_i>-1$, there exists (by elementary o.d.e.\ theory) a unique holomorphic map $L$ from the (universal cover of) a punctured neighbourhood of $z=0$ to the loop group $\Lambda G=C^\infty(S^1,G)$ such that
\begin{equation}\label{L}
L^\ast \leftMC(\tfrac \partial {\partial z}) = \omega(\tfrac \partial {\partial z})   
\end{equation}
and $\lim_{z\to 0} L(z)=e\in G$, where 
$\leftMC$ is the left Maurer-Cartan form of $G$. 

With respect to the real form 
\[
\Lambda_{\bR} G =
\{
\gamma:S^1\to G
\st
\chi(\gamma(\lambda))=\gamma(1/\bar\lambda)
\}
\]
of $\Lambda G$, the loop group Iwasawa factorization
$L=L_\bR L_+$ exists on a (possibly smaller) 
neighbourhood of $z=0$.  By definition we have
\begin{gather*}
\chi(L_\bR(z,\bar z,1/{\bar\lambda}))=L_\bR(z,\bar z,\lambda)
\\
L_+(z,\bar z,\lambda)=\sum_{i=0}^\infty L_i(z,\bar z) \lambda^i,
\end{gather*}
with
$\lim_{z\to 0} L_\bR(z,\bar z,1/{\bar\lambda})=I$,
$\lim_{z\to 0} L_+(z,\bar z,1/{\bar\lambda})=I$.
Furthermore $L_0=e^{\ell(z,\bar z)}$, where $\ell(z,\bar z)\in\fh_\sharp$ for all $z$, and $\lim_{z\to 0} \ell(z,\bar z)=0$.  For these and other properties of the Iwasawa factorization we refer to \cite{PrSe86},\cite{Gu97},\cite{BaDo01}.

Let us introduce the $H$-valued \lq\lq monomial\rq\rq\  function 
\[
h(z)=e^Dz^r=e^{D+r\log z} 
\]
where $D,r\in\fh_\sharp$ are defined by 
\begin{gather*}
\alpha_i(r)=k_i+1 - \tfrac Ns,
\quad
1\le i\le l
\\
\alpha_i(D)=\log c_i - \tfrac1s\log c,
\quad
1\le i\le l
\end{gather*}
(these fix $r,D$ as $\alpha_1,\dots,\alpha_l$ are a basis of $\fh_\sharp^\ast$).
Here $H$ is the subgroup of $G$ corresponding to $\fh$.
Let 
\[
G_h=\vert h\vert\,h^{-1},
\]
where $\vert h\vert = (h\bar h)^{1/2}$. This notation is an abbreviation for
$
\bar h=e^{D+r\log \bar z},
\vert h\vert = e^{D+r\log \vert z\vert},
$
and we use the convention that \lq\lq bar\rq\rq\  means conjugation with respect to the real subspace $\fh_\sharp$ of $\fh$ or the real subgroup $h_\sharp$ of $H$.  

To relate the connection $\omega$ to the tt*-Toda equations, we introduce the $\fh_\sharp$-valued function
\[
w=\ell - \log\vert h\vert = \ell - (D+r\log\vert z\vert).
\]
Then, as in section 2 of \cite{GIL3}, one can compute
\begin{equation}\label{prelimalpha}
(L_\bR G_h)^\ast(\leftMC) =
(w_z +\tfrac1\lambda \nu \tilde E_-)dz
+
(-w_{\bar z} + \lambda \bar\nu \tilde E_+)d\bar z
\end{equation}
where 
\begin{equation}\label{nu}
\nu = (p_0^{q_0}\dots p_l^{q_l})^{1/s} 
= (cz^{N-s})^{1/s} 
\end{equation}
(the definitions of $h, G_h,w$ are motivated by this calculation).  

By construction, over the domain of definition of $L_\bR G_h$, this connection is flat.  On the other hand, direct calculation of the curvature (cf.\ Proposition \ref{zcc}) gives 
\[
2w_{z\bar z} =  - \nu\bar\nu \sum_{i=0}^l e^{-2\alpha_i(w)} H_{\alpha_i}.
\]
Thus, the Iwasawa factorization (starting with the connection form $\omega$) has produced an $\fh_\sharp$-valued function $w$ (defined locally near $z=0$) satisying a p.d.e.\ which resembles the Toda equations (\ref{Toda}).  We obtain the  Toda equations exactly if we make the change of variable $t=\frac sN c^{1/s} z^{N/s}$, for (\ref{nu}) gives $dt=\nu dz$ and then the connection form (\ref{prelimalpha}) becomes exactly the  connection form $\alpha$ of Definition \ref{alphadefinition} (but the variables $z$, $x=\vert z\vert$ there are replaced by $t$, $x=\vert t\vert$ here).

Furthermore, all conditions 
of Definition \ref{tt*-Toda} are satisfied, so we have produced a solution $w$ of the tt*-Toda equations from the connection form $\omega$, i.e.\ from the data $p_0,\dots,p_l$ where $p_i=c_i z^{k_i}$ and $c_i>0,k_i>-1$.  As in \cite{GIL3} this construction may be extended to the case $k_i\ge-1$.  

From the definition of $w$ we have
$
2w(\vert t\vert) \sim -\log \vert h\vert^2 \sim -2(\log\vert z\vert) r
$
as $z\to 0$.  Taking into account the change of variable, we deduce that 
$
2w(\vert t\vert) \sim -2m\log \vert t\vert
$
as $z\to 0$, where $m\in\fh_\sharp$ is defined by 
\begin{equation}\label{m}
\alpha_i(m)=\tfrac sN(k_i+1) - 1,
\quad
1\le i\le l.
\end{equation}
Conversely, let $w$ be any local
radial solution near zero of the p.d.e.\ 
\[
2w_{t \bar t} = -\sum_{i=0}^l k_i e^{ -2\alpha_i(w)} H_{\alpha_i}
\]
such that $2w(\vert t\vert) \sim -2m\log \vert t\vert$ as $t\to 0$.
Then 
\[
e^{ -2\alpha_i(w)} \sim e^{ 2\alpha_i(m) \log\vert t\vert }
= \vert t\vert^{2 \alpha_i(m)}
= \vert t\vert^{\tfrac {2s}N(k_i+1) - 2}.
\]
A necessary condition for such a solution to exist is that 
$\tfrac {2s}N(k_i+1) - 2\ge-2$ for all $i$, in other words
$k_i\ge-1$.  Thus, writing $z$ instead of $t$ for consistency with sections \ref{back} and \ref{eqns}, we have:

\begin{pro}\label{odepde}
Let $m\in\fh_\sharp$.  There exists a local solution near zero of the tt*-Toda equations 
such that $w(\vert z\vert)\sim -m\log\vert z\vert$ as $z\to 0$
if and only if  $\alpha_i(m)\ge-1$ for $i=0,\dots,l$.
\end{pro}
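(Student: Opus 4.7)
The plan is to prove the two implications separately, leveraging the Iwasawa-type construction of local solutions developed in this section.

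\textbf{Sufficiency.} Given $m\in\fh_\sharp$ with $\alpha_i(m)\geq-1$ for all $i=0,\dots,l$, I will exhibit admissible data $(c_i,k_i)$ whose image under the construction realizes the stated asymptotic. Take $N=s$, $c_i=1$, and $k_i=\alpha_i(m)$ for every $i$. Then $k_i\geq-1$ by hypothesis, and equation (\ref{m}) is satisfied. Consistency of the choice $N=s$ with the definition $N=s+\sum_{i=0}^l q_ik_i$ reduces to the identity $\sum_{i=0}^l q_i\alpha_i(m)=0$, which follows at once from $\alpha_0=-\psi=-\sum_{i=1}^l q_i\alpha_i$ together with $q_0=1$. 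The required symmetries $c_i=c_{\nu(i)}$, $k_i=k_{\nu(i)}$ follow from $\alpha_i\circ\sigma=\alpha_{\nu(i)}$ (equation (\ref{autonu})) together with the tacit constraint $\sigma(m)=m$ (without which no tt*-Toda solution can simultaneously satisfy $\sigma(w)=w$ and the asymptotic $w\sim -m\log\vert z\vert$). Feeding this data into the construction --- formulas (\ref{omega})--(\ref{nu}), the Iwasawa factorization, and the change of variable $t=\tfrac{s}{N}c^{1/s}z^{N/s}$ --- yields a local solution $w$ of (\ref{ttt}), whose asymptotic was computed in the paragraph preceding the proposition to be $w(\vert t\vert)\sim -m\log\vert t\vert$.

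\textbf{Necessity.} Suppose $w$ is a local radial solution of (\ref{ttt}) with $w(x)\sim -m\log x$ as $x\to 0$ (where $x=\vert z\vert$), and assume for contradiction that some $\alpha_j(m)<-1$. Recast the radial Toda equation in the integrated form $(xw_x)_x=-2x\sum_{i=0}^l k_i e^{-2\alpha_i(w)}H_{\alpha_i}$. The asymptotic hypothesis yields $xw_x\to -m$ as $x\to 0$, so the left-hand side is integrable on $(0,\epsilon)$ against $dx$. On the right-hand side, $e^{-2\alpha_i(w)}$ is asymptotic to a positive constant times $x^{2\alpha_i(m)}$, so the $i$-th term contributes $x^{2\alpha_i(m)+1}$ times a strictly positive multiple of $H_{\alpha_i}$. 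Setting $c^\ast=\min_i\alpha_i(m)<-1$ and $S=\{i:\alpha_i(m)=c^\ast\}$, the dominant singular term is $x^{2c^\ast+1}\sum_{i\in S}\lambda_i H_{\alpha_i}$ with $\lambda_i>0$, which is non-integrable near $x=0$. For compatibility with the left-hand side, one must have $\sum_{i\in S}\lambda_iH_{\alpha_i}=0$. However, the only linear relation among $H_{\alpha_0},\dots,H_{\alpha_l}$ is the affine relation $\sum_{i=0}^l q_iH_{\alpha_i}=0$ (with all $q_i>0$); so vanishing with positive coefficients forces $S=\{0,1,\dots,l\}$, i.e.\ all $\alpha_i(m)$ equal to $c^\ast$. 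But then $\sum q_i\alpha_i(m)=0$ gives $sc^\ast=0$, contradicting $c^\ast<-1$. Hence $\alpha_i(m)\geq-1$ for every $i$.

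\textbf{Main obstacle.} The delicate point is the necessity direction: one must verify that the asymptotic $w\sim-m\log x$ controls $(xw_x)_x$ well enough to make the integrability argument rigorous, which rests on basic regularity of solutions of the radial Toda system viewed as a second-order ODE. Once this is in hand, the algebraic step ruling out cancellation of the leading singularity via the unique affine relation $\sum q_iH_{\alpha_i}=0$ is short and robust, and is the genuinely Lie-theoretic input.
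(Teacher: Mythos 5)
Your sufficiency argument is essentially the paper's own: the paper establishes existence by running the Iwasawa-factorization construction with data $(c_i,k_i)$, $k_i\ge-1$, and reading off the asymptotic exponent from (\ref{m}); your choice $N=s$, $c_i=1$, $k_i=\alpha_i(m)$ is a convenient normalization of exactly that, and your remark that $\sigma(m)=m$ is tacitly required is consistent with the paper's later restriction to $\fA^\sigma$ in Theorem \ref{convexset}. For necessity the paper is far terser than you are --- it simply asserts that $e^{-2\alpha_i(w)}\sim|t|^{2\alpha_i(m)}$ forces the exponent to be $\ge-2$ --- whereas you supply an actual argument: the integrated form $(xw_x)_x=-2x\sum_i k_ie^{-2\alpha_i(w)}H_{\alpha_i}$ together with the fact that every proper subset of $\{H_{\alpha_0},\dots,H_{\alpha_l}\}$ is linearly independent (the relation space being spanned by $(q_0,\dots,q_l)$, all positive). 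This correctly rules out the cancellation possibility that the paper's one-line assertion passes over in silence, and is a genuine improvement in completeness.

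One caution on the necessity step. The hypothesis $w\sim-m\log|z|$ gives only $e^{-2\alpha_i(w)}=x^{2\alpha_i(m)+o(1)}$, not ``a positive constant times $x^{2\alpha_i(m)}$''. If the stronger form were available, your integrability argument would equally exclude the boundary case $\alpha_i(m)=-1$ (where $x^{1+2\alpha_i(m)}=x^{-1}$ is non-integrable), contradicting the ``if'' direction; the boundary solutions exist precisely because logarithmic corrections hidden in the $o(1)$ keep the right-hand side integrable there. Since you invoke non-integrability only when $c^\ast<-1$, where $x^{1+2c^\ast+o(1)}$ remains non-integrable, the conclusion stands, but the asymptotic should be stated in the weaker exponent-level form. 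Likewise $xw_x\to-m$ does not follow from the undifferentiated asymptotic; as you anticipate, one should instead apply a linear functional $\phi$ that is positive on $\{H_{\alpha_i}\}_{i\in S}$ (possible once $S$ is proper), deduce that $\phi(xw_x)$ is monotone and unbounded from the sign and non-integrability of the dominant term, and contradict $\phi(w)/\log x\to-\phi(m)$. With these adjustments the proof is complete.
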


We shall compute the Stokes data of such solutions in terms of the asymptotic data $m$ (Theorem \ref{explicitQQPi}).  In view of (\ref{m}), this shows that the Stokes data depends only on the $k_i$, not on the $c_i$. 

As in \cite{GIL3} in the case $G=SL_n\bC$, we shall carry out the computation by making use of a simpler meromorphic connection form $\hat\omega$ which has the same Stokes data as $\hat\alpha$.  
We define $\ho$ by
\begin{equation}\label{hatomega}
\ho=\left(
-\tfrac sN \tfrac z{\lambda^2} \eta_- +\tfrac 1\lambda m
\right)
d\lambda.
\end{equation}
It is a meromorphic connection form with poles of order $2,1$ at $\lambda=0,\infty$.

\begin{pro}\label{gL}
Let $g=\lambda^m=e^{ (\log\lambda)m }$.  Then

(1) $(gL)^\ast \leftMC (\frac \partial {\partial z})
=\omega (\frac \partial {\partial z})$

(2) $(gL)^\ast \leftMC (\frac \partial {\partial \lambda}) = \ho (\frac \partial {\partial \lambda})$.
\end{pro}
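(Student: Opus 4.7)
My approach splits into a trivial part (1) and a substantive part (2), which I handle via a flatness-plus-uniqueness argument for the joint $\fg$-valued form $\Omega := \omega + \ho$ on the $(z,\lambda)$-domain. Part (1) is immediate: since $g = \lambda^m$ is independent of $z$, one has $(gL)^{-1}\partial_z(gL) = L^{-1}\partial_z L$, which equals $\omega(\partial/\partial z)$ by the defining equation (\ref{L}).

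For part (2), set $A = \omega(\partial/\partial z) = \eta_-/\lambda$ and $B = \ho(\partial/\partial\lambda) = -\tfrac{sz}{N\lambda^2}\eta_- + m/\lambda$. The first step is to verify the flatness $\partial_z B - \partial_\lambda A + [A,B] = 0$ of $\Omega$. Since $[\eta_-,\eta_-] = 0$, the bracket reduces to $\lambda^{-2}[\eta_-,m] = \lambda^{-2}\sum_i p_i\alpha_i(m)e_{-\alpha_i}$. The definition of $m$ gives $\alpha_i(m) = \tfrac{s}{N}(k_i+1)-1$ for $1\le i\le l$, and a short calculation using $\psi = -\alpha_0 = \sum_{i=1}^l q_i\alpha_i$ together with $N = s + \sum_{i=0}^l q_i k_i$ shows the same formula extends to $i = 0$. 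With this, $[\eta_-,m] = \tfrac{s}{N}(\eta_- + z\,\partial_z\eta_-) - \eta_-$, which cancels precisely against $\partial_\lambda A = -\eta_-/\lambda^2$ and $\partial_z B = -\tfrac{s}{N\lambda^2}(\eta_- + z\,\partial_z\eta_-)$. Given flatness, I close the argument by uniqueness for a linear homogeneous first-order o.d.e.\ in $z$. For any $G$-valued function $F$, the pullback $F^\ast\leftMC$ automatically satisfies the Maurer--Cartan structural equation, so it is flat. Applied to $F = gL$, using part (1) to identify its $dz$-component with $A$, its $d\lambda$-component $\tilde B = \Ad(L^{-1})(m/\lambda) + L^{-1}\partial_\lambda L$ satisfies $\partial_z\tilde B - \partial_\lambda A + [A,\tilde B] = 0$. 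Subtracting the flatness of $\Omega$ yields $\partial_z D + [A,D] = 0$ for $D := \tilde B - B$. Sending $z\to 0$, we have $L\to e$ so $\tilde B \to m/\lambda$, and $z\eta_-\to 0$ since $k_i > -1$ so $B\to m/\lambda$; hence $D\to 0$, and the linear o.d.e.\ forces $D\equiv 0$.

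The main obstacle I anticipate is the flatness verification itself: it is purely computational but is the step where the precise definition of $m$ enters, and one must confirm separately (via $N$ and $\psi = \sum q_i\alpha_i$) that $\alpha_i(m) = \tfrac{s}{N}(k_i+1)-1$ holds uniformly for $i = 0,1,\ldots,l$. Everything after that flatness identity is formal: the zero-curvature equation for $(gL)^\ast\leftMC$ is automatic, and the boundary limits at $z = 0$ are guaranteed by the defining property $\lim_{z\to 0}L = e$ together with the hypothesis $k_i > -1$.
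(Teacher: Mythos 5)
Your proof is correct. Part (1) is identical to the paper's argument. For part (2) the paper says only \lq\lq direct computation\rq\rq; what is meant (as in the $SL_n\bC$ case treated in \cite{GIL3}) is an explicit evaluation of $L^{-1}\partial_\lambda L$ from a homogeneity property of $L$: the form $\omega$ is invariant under $(z,\lambda)\mapsto(\mu^{-s/N}z,\mu^{-1}\lambda)$ combined with conjugation by $\mu^{-m}$, so uniqueness of $L$ gives $\mu^{-m}L(\mu^{-s/N}z,\mu^{-1}\lambda)\mu^{m}=L(z,\lambda)$, and differentiating at $\mu=1$ yields $\lambda L^{-1}L_\lambda=-\Ad(L^{-1})m+m-\tfrac sN\tfrac z\lambda\eta_-$, from which (2) follows by adding $\tfrac1\lambda\Ad(L^{-1})m$. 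Your route is genuinely different: you first verify the zero-curvature identity for $\omega+\ho$ --- which is exactly where the definitions of $m$ and $N$ enter, via the uniform formula $\alpha_i(m)=\tfrac sN(k_i+1)-1$ for $i=0,1,\dots,l$ (your check of the $i=0$ case is correct) --- and then identify the $d\lambda$-component of $(gL)^\ast\leftMC$ by uniqueness for the linear o.d.e.\ $\partial_z D+[A,D]=0$ with $D\to0$ as $z\to 0$. This reverses the logic of the paper, which obtains the flatness of $d+\omega+\ho$ as a \emph{consequence} of the proposition (see the footnote following it); your version takes that flatness as input, which has the merit of isolating the one computation that uses $m$. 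What each approach buys: the homogeneity computation gives an explicit closed formula for $L^{-1}\partial_\lambda L$, while yours avoids having to establish the scaling property of $L$ at all. Two small points you should make explicit in a write-up: the limit $\tilde B\to m/\lambda$ needs $L^{-1}\partial_\lambda L\to0$, which holds because the iterated-integral series for $L$ converges to $e$ locally uniformly in $\lambda$ (not merely pointwise); and since $A=\eta_-/\lambda$ has an integrable singularity at $z=0$ when some $k_i<0$, uniqueness for $\partial_z D=-[A,D]$ with prescribed limit is cleanest via the explicit general solution $D=\Ad(L^{-1})C$ with $C$ constant in $z$, which forces $C=\lim_{z\to0}D=0$.
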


\begin{proof}
(1) As $g$ is independent of $z$, $(gL)^\ast \leftMC (\frac \partial {\partial z}) = 
L^\ast \leftMC (\frac \partial {\partial z})$. By the definition (\ref{L}) of $L$, this is $\omega (\frac \partial {\partial z})$.  (2) Direct computation (the definition of $g$ is motivated by this).
\end{proof}

Thus we have\footnote{This implies that the combined connection $d+\omega+\ho$ is flat, and hence the monodromy data of $\ho$ is independent of $z$, just as the monodromy data of $\hat\alpha$ was independent of $z,\bar z$.  However, the isomonodromic deformation of $\ho$, given by the explicit form of $\omega$, is very simple, whereas that of $\hat\alpha$ is complicated, being given by solutions $w$ of the tt*-Toda equations.}
$(gL)^\ast \leftMC = \omega+\ho$.

The analogous statements for $\alpha,\hat\alpha$ are:

\begin{pro}\label{gLRG}
$ $

(1) $(gL_\bR G_h)^\ast \leftMC (\frac \partial {\partial z}) = 
\alpha (\frac \partial {\partial z})$,
$(gL_\bR G_h)^\ast \leftMC (\frac \partial {\partial \bar z}) = 
\alpha (\frac\partial{\partial \bar z})$ 

(2) $(gL_\bR G_h)^\ast \leftMC (\frac \partial {\partial \lambda}) = 
\hat\alpha (\frac\partial{\partial \lambda})$ 
\end{pro}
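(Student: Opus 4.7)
The plan is to verify the two parts by direct computation, reusing Proposition~\ref{gL} and equation~(\ref{prelimalpha}). The argument splits cleanly along the $(z,\bar z)$ versus $\lambda$ directions because $g=\lambda^m$ is independent of $(z,\bar z)$ while $G_h=\vert h\vert h^{-1}$ is independent of $\lambda$.

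For part (1), the key observation is that since $g=\lambda^m$ depends only on $\lambda$, applying the product rule for the left Maurer-Cartan form to $gL_\bR G_h$ yields
\[
(gL_\bR G_h)^*\leftMC(\tfrac{\partial}{\partial z}) = (L_\bR G_h)^*\leftMC(\tfrac{\partial}{\partial z}),
\]
and similarly for $\partial/\partial\bar z$. But the right-hand side was already computed in (\ref{prelimalpha}) to be $w_z+\tfrac{1}{\lambda}\nu\tilde E_-$ (respectively $-w_{\bar z}+\lambda\bar\nu\tilde E_+$). Under the change of variable $t=\tfrac{s}{N}c^{1/s}z^{N/s}$, for which $dt=\nu\,dz$ by (\ref{nu}), these become $w_t+\tfrac{1}{\lambda}\tilde E_-$ and $-w_{\bar t}+\lambda\tilde E_+$, which is precisely $\alpha$ in the sense of Definition~\ref{alphadefinition}. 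Identifying the proposition's $z$ with $t$ (as in the surrounding discussion) gives part (1).

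For part (2), write the Iwasawa factorization as $gL_\bR=(gL)L_+^{-1}$. The product rule for left Maurer-Cartan forms, combined with Proposition~\ref{gL}(2), gives
\[
(gL_\bR)^*\leftMC(\tfrac{\partial}{\partial\lambda}) = \Ad(L_+)\bigl(\ho(\tfrac{\partial}{\partial\lambda}) - L_+^{-1}\partial_\lambda L_+\bigr).
\]
Since $G_h$ is independent of $\lambda$, right multiplication by $G_h$ merely conjugates by $G_h^{-1}$, so
\[
(gL_\bR G_h)^*\leftMC(\tfrac{\partial}{\partial\lambda}) = \Ad(G_h^{-1}L_+)\bigl(\ho(\tfrac{\partial}{\partial\lambda}) - L_+^{-1}\partial_\lambda L_+\bigr).
\]
It remains to identify this with $\hat\alpha(\partial/\partial\lambda)$ from Definition~\ref{hatalpha}, after translating between $\lambda$ and $\mu=\lambda x/z$ (note that $d\mu/\mu=d\lambda/\lambda$ at fixed $z,\bar z$, so the substitution only affects the coefficients).

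The main obstacle I anticipate is the Iwasawa bookkeeping: tracking the precise $\lambda$-expansion of $L_+=\sum_{i\ge 0}L_i\lambda^i$ (with $L_0=e^{\ell}$) and showing that $\Ad(G_h^{-1}L_+)$ conjugates the $\lambda^{-2}$ piece $-\tfrac{s}{N}z\,\eta_-$ inside $\ho$ into $-\tfrac{x}{\mu}\tilde E_-$, the $\lambda^{-1}$ piece $m$ into $-xw_x$, and produces the $\mu x\tilde E_+$ term from the $L_+^{-1}\partial_\lambda L_+$ correction. A potentially cleaner route, which I would pursue if the direct calculation becomes unwieldy, is to invoke flatness: part (1) already fixes the $dz$ and $d\bar z$ components of $(gL_\bR G_h)^*\leftMC$ as $\alpha$, and automatic flatness of a pulled-back Maurer-Cartan form together with the mixed $dz\wedge d\lambda$ and $d\bar z\wedge d\lambda$ components of $d\alpha+\alpha\wedge\alpha=0$ constrains the $d\lambda$ component up to boundary behaviour at $\lambda=0,\infty$. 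Matching that boundary behaviour against the known asymptotics $L_+\to I$ and $g=\lambda^m$ then pins down the $d\lambda$ component as $\hat\alpha(\partial/\partial\lambda)$.
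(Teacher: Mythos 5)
Your part (1) is exactly the paper's argument: since $g=\lambda^m$ is independent of $z$, the $dz,d\bar z$ components of $(gL_\bR G_h)^\ast\leftMC$ reduce to (\ref{prelimalpha}), and the change of variable $t=\tfrac sN c^{1/s}z^{N/s}$ with $dt=\nu\,dz$ identifies that with $\alpha$. For part (2) the paper says only ``this is a direct computation,'' and your setup via the factorization $gL_\bR=(gL)L_+^{-1}$ together with Proposition \ref{gL}(2) is the natural way to organize exactly that computation; just be aware that the ``Iwasawa bookkeeping'' you defer --- verifying that $\Ad(G_h^{-1}L_+)\bigl(\ho(\tfrac{\partial}{\partial\lambda})-L_+^{-1}\partial_\lambda L_+\bigr)$ equals $\hat\alpha(\tfrac{\partial}{\partial\lambda})$, using $\eta_\pm=\nu\,\Ad(h^{\mp1})E_\pm$ and the expansion $L_+=e^{\ell}+O(\lambda)$ --- is the entire content of the statement, so it (or your flatness fallback) must actually be carried out for the proof to be complete.
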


\begin{proof} (1) This follows from (\ref{prelimalpha}) and the discussion above, using the fact that $g$ is independent of $z$. (2)  This is a direct computation.
\end{proof}

This means $(gL_\bR G_h)^\ast \leftMC = 
\alpha + \hat\alpha$, so we now have an explicit formula $F=gL_\bR G_h$
for the map $F$ of section \ref{eqns} (see the remarks before and after Definition \ref{hatalpha}). 

The key relation between $\ho$ and $\hat\alpha$ is provided by Propositions \ref{gL} and \ref{gLRG} and the Iwasawa factorization $L=L_\bR L_+$.  As a consequence, it will follow that 
$\hat\alpha, \ho$ have the same Stokes data.
In keeping with the conventions of section \ref{data}, however, we shall work with $\hb$ instead of $\hat\alpha$, and the connection form 
$
\left(
-\tfrac sN \tfrac z{\lambda^2} \eta_+ +\tfrac 1\lambda m
\right)
d\lambda
$
instead of $\ho$, where $\eta_+=\sum_{i=0}^l p_i e_{\alpha_i}$.  

In classical matrix notation, this means that we are comparing the Stokes data of the two systems
\begin{gather}\label{sys1}
\tfrac{d\Psi}{d\zeta}=
\left(
-\tfrac{1}{\zeta^2}\tilde E_+ - \tfrac{1}{\zeta}xw_x + x^2 \tilde E_-
\right)
\Psi,
\\
\label{sys2}
\tfrac{d\Phi}{d\lambda}=
\left(
-\tfrac sN \tfrac z{\lambda^2} \eta_+ +\tfrac 1\lambda m
\right)
\Phi.
\end{gather}
As $\zeta=\lambda/t$ it
will be convenient to assume that $t$ and $z$ are real and positive. This allows us to use the same Stokes sectors for both systems, but does not affect the Stokes data (which is independent of $t,z$).

Recall from section \ref{data} that we have canonical solutions $\Psiz_i$ of (\ref{sys1}) on Stokes sectors
$\supersec_i$, asymptotic to the formal solution 
$\Psiz_f(\zeta)=e^{-w}(I+\sum_{k\geq 1}\psi_k \zeta^k)e^{\frac{1}{\zeta}E_+}$
of Lemma \ref{lm:formalsolution}.
Stokes factors are defined by $\Psiz_{i+\frac1s}(\zeta) =\Psiz_i(\zeta) K_{i}$.  

In exactly the same way, at the pole $\lambda=0$, we obtain  canonical solutions $\Phiz_i$ of (\ref{sys2}) on the same Stokes sectors
$\supersec_i$, asymptotic to a unique formal solution of the form 
$\Phiz_f(\lambda)=h(I+\sum_{i\ge 1} \phiz_i \lambda^i)e^{\frac t\lambda E_+}$.  
This particular formal solution arises (cf.\ the proof of Lemma \ref{lm:formalsolution}) because we have
$\eta_+=\nu hE_+ h^{-1}$ and hence
$-\tfrac sN \tfrac z{\lambda^2}\eta_+=-\tfrac t{\lambda^2}hE_+ h^{-1}$.
The formal monodromy is trivial here for the same reason as in the proof of Lemma \ref{lm:formalsolution}. 
Stokes factors are defined by $\Phiz_{i+\frac1s}(\lambda) =\Phiz_i(\lambda) J_{i}$. 

At the regular singularity $\lambda=\infty$, by standard o.d.e.\ theory, there is a canonical solution of (\ref{sys2}) of the form
$\Phii(\lambda)=(I+\sum_{i\ge1} \phii_i \lambda^{-i}) \lambda^m\lambda^M$
where $M\in\fg$ is nilpotent (we shall not need to know $M$ explicitly).
In $\lambda^m\lambda^M$ we use the (analytic continuation of the) branch of $\log\lambda$ which is real when $\lambda$ is real and positive.

\begin{pro}\label{samestokes}
We have $J_i=K_i$ for all $i$.
\end{pro}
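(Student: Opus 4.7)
The plan is to exploit the loop-group Iwasawa decomposition $L = L_\bR L_+$ to produce an explicit $\lambda$-holomorphic gauge that intertwines the canonical solutions of (\ref{sys1}) and (\ref{sys2}) on every Stokes sector. Since the gauge does not depend on the sector, the Stokes factors of the two systems must then agree. The natural candidate is $L_+(z,\bar z,\lambda) = L_0 + L_1\lambda + L_2\lambda^2 + \cdots$, which is a convergent power series in $\lambda$ near $\lambda = 0$ with invertible leading coefficient $L_0 = e^\ell$; this is precisely what is needed to preserve the structure of asymptotic expansions at $\lambda = 0$.

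Concretely, taking $z>0$ real, I would define
\[
\tilde\Phi_i(\lambda) \;:=\; L_+(z,\bar z,\lambda)\,\Psiz_i(\lambda/t)
\]
on each Stokes sector $\supersec_i$ and show that $\tilde\Phi_i = \Phiz_i$. Two things must be checked. First, that $\tilde\Phi_i$ satisfies (\ref{sys2}); this comes from Propositions \ref{gL} and \ref{gLRG} combined with the Iwasawa identity $gL = (gL_\bR G_h)(G_h^{-1}L_+)$, which exhibits $G_h^{-1}L_+$ as the explicit gauge transformation from the $\hat\alpha$-connection to the $\ho$-connection in the $\lambda$-direction, together with the $\tilde E_+\leftrightarrow\tilde E_-$ substitution already used to pass from $\hat\alpha$ to $\hb$ and from the $\eta_-$-version of $\ho$ to the $\eta_+$-version. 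Second, that $\tilde\Phi_i$ has the correct asymptotic expansion $\Phiz_f(\lambda) = h(I+\sum_i \phiz_i\lambda^i)e^{tE_+/\lambda}$ as $\lambda\to 0$. The exponential factors agree under $\zeta = \lambda/t$. For the prefactor, the definition $w = \ell - \log|h|$ and the commutation of the Cartan elements $L_0,h\in H$ give
\[
L_0\cdot e^{-w} \;=\; e^\ell\cdot h\, e^{-\ell} \;=\; h,
\]
so the formal product $L_+(\lambda)\,\Psiz_f(\lambda/t)$ has the canonical form $h(I+O(\lambda))e^{tE_+/\lambda}$; by uniqueness of the formal solution of (\ref{sys2}) (Lemma \ref{lm:formalsolution}), it must equal $\Phiz_f(\lambda)$. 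Uniqueness of the canonical solution (Lemma \ref{lm:canonicalsolution}) then yields $\tilde\Phi_i = \Phiz_i$.

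Once the identification $\Phiz_i = L_+\,\Psiz_i(\cdot/t)$ is established, the conclusion is immediate: applying it to adjacent sectors gives
\[
\Phiz_i\, J_i \;=\; \Phiz_{i+\frac1s} \;=\; L_+\,\Psiz_{i+\frac1s}(\lambda/t) \;=\; L_+\,\Psiz_i(\lambda/t)\,K_i \;=\; \Phiz_i\, K_i,
\]
and cancelling the invertible $\Phiz_i$ yields $J_i = K_i$. I expect the main obstacle to be the first check in the previous paragraph: the Iwasawa identity of Propositions \ref{gL}--\ref{gLRG} is written for $\ho$ (using $\eta_-$) and $\hat\alpha$, whereas (\ref{sys1}) and (\ref{sys2}) use $\hb$ and the $\eta_+$-version of $\ho$, so one must track carefully how the $\tilde E_\pm$ duality propagates through the gauge identity and verify that the same factor $L_+$ (rather than some dual object $L'_+$) does the intertwining. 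Once that identity is in place, the rest is a formal computation driven by uniqueness of both the formal and the canonical solutions.
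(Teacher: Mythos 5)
Your argument is correct and is essentially the proof the paper intends: the paper's own proof simply defers to Corollary 4.3 of \cite{GIL3}, whose content is exactly this Iwasawa-factorization gauge argument --- the factor coming from $L=L_\bR L_+$ is holomorphic and invertible at $\lambda=0$ with Cartan-valued leading term, so it intertwines the canonical solutions sector by sector without altering the Stokes factors. The one point you flag does require the resolution you anticipate: since the canonical solutions solve the dual systems ($\hb=\hat\alpha^t$ and the $\eta_+$-version of $\ho$), the intertwining gauge is the transposed/dual object built from $G_h^{-1}L_+$ rather than $L_+$ itself (these coincide up to the duality for $z>0$ real, where $G_h=I$), but it has the same holomorphicity, invertibility, and leading coefficient $e^\ell$, so the asymptotic matching $L_0\,e^{-w}=h$ and the final cancellation go through unchanged.
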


\begin{proof}
This is analogous to the proof of Corollary 4.3 of \cite{GIL3}.
\end{proof}

\begin{pro}\label{conj}
We have
$
K_i K_{i+\frac1s}P_0=D_i \, e^{2\pi\i m/s} e^{2\pi\i M/s} P_0\,  D_i^{-1}
$
for some $D_i\in G$.
\end{pro}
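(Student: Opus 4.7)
The plan is to realize $D_i$ as (the inverse of) the connection matrix between the canonical solutions $\Phi^{(0)}_i$ at $\lambda = 0$ and $\Phi^{(\infty)}$ at $\lambda = \infty$ of system~(\ref{sys2}), and to derive the stated identity by comparing the partial monodromy $\lambda \mapsto e^{2\pi i/s}\lambda$ carried by each of these solutions. On the $\Phi^{(0)}_i$ side the partial monodromy produces the factor $K_i K_{i+\frac{1}{s}} P_0$, while on the $\Phi^{(\infty)}$ side it produces $e^{2\pi i m/s} e^{2\pi i M/s}$; the two then get conjugated to one another by $D_i$.

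First I would verify the cyclic symmetry $\tau(\hat\omega(\lambda)) = \hat\omega(e^{-2\pi i/s}\lambda)$ by direct computation, using $\tau(\eta_+) = e^{2\pi i/s}\eta_+$ and $\tau(m) = m$ (since $m \in \fh_\sharp$). Running the proof of Proposition~\ref{psi-symm}(i) with $\Phi^{(0)}_i$ in place of $\Psi^{(0)}_i$ yields $\tilde\tau(\Phi^{(0)}_{i-\frac{2}{s}}(e^{2\pi i/s}\lambda)) = \Phi^{(0)}_i(\lambda)$; combining this with the definition of the Stokes factors and with Proposition~\ref{samestokes} (so that the $J_i$ may be replaced by $K_i$) produces the partial monodromy relation
\[
\tilde\tau\bigl(\Phi^{(0)}_i(e^{2\pi i/s}\lambda)\bigr) \;=\; \Phi^{(0)}_i(\lambda)\, K_i K_{i+\frac{1}{s}}.
\]

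Next I would carry out the analogous computation at infinity. Because $\tilde\tau(\Phi^{(\infty)}(e^{2\pi i/s}\lambda))$ is again a fundamental solution of $d - \hat\omega$, it must equal $\Phi^{(\infty)}(\lambda) \cdot C$ for some constant $C \in G$. Inserting the regular-singular form $\Phi^{(\infty)}(\lambda) = (I + O(1/\lambda))\lambda^m \lambda^M$ and comparing leading asymptotics as $\lambda \to \infty$—using that $\tilde\tau|_H = \mathrm{id}$ since $P_0 \in H$—identifies $C = e^{2\pi i m/s} e^{2\pi i M/s}$ provided that $M$ is normalized so that $[m,M] = 0$ (standard at a regular singular point) and $[P_0, M] = 0$. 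To finish, define $D_i \in G$ by $\Phi^{(0)}_i(\lambda) = \Phi^{(\infty)}(\lambda) D_i^{-1}$, substitute into the partial monodromy relation above, cancel $\Phi^{(\infty)}(\lambda)$, and use $\tilde\tau(D_i^{-1}) = P_0 D_i^{-1} P_0^{-1}$; the identity
\[
K_i K_{i+\frac{1}{s}} P_0 \;=\; D_i\, e^{2\pi i m/s} e^{2\pi i M/s} P_0\, D_i^{-1}
\]
falls out after rearrangement.

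The main obstacle is arranging the normalization $[P_0, M] = 0$. Cyclic symmetry of the full monodromy $\mu = e^{2\pi i(m+M)}$ at infinity forces $[P_0, \mu] = 0$, and hence $[P_0, \mu_u] = 0$ for the unipotent part $\mu_u$ in the Jordan decomposition of $\mu$. Choosing the canonical form at the regular singularity so that $M = \tfrac{1}{2\pi i}\log \mu_u$ (which makes sense as $M$ is nilpotent, so $\log\mu_u$ is a polynomial in $\mu_u - I$) then gives $[P_0, M] = 0$ automatically, making the identification of $C$ in the previous paragraph rigorous. Once $M$ is fixed in this way, the remainder of the argument is purely formal manipulation.
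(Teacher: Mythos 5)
Your proof is correct and follows essentially the same route as the paper: you introduce the connection matrix $D_i$ relating $\Phi^{(0)}_i$ and $\Phi^{(\infty)}$, and compare the partial monodromy $\lambda\mapsto e^{2\pi\i/s}\lambda$ computed at $\lambda=0$ (where Proposition \ref{samestokes} lets you replace $J_i$ by $K_i$ and the symmetry yields the factor $K_iK_{i+\frac1s}P_0$) with the one computed at $\lambda=\infty$ (yielding $e^{2\pi\i m/s}e^{2\pi\i M/s}$). Your explicit normalization of $M$ with $[P_0,M]=0$ addresses a point the paper dismisses as ``a similar (but easier) argument''; the only caveat is that the symmetry directly gives $P_0\mu P_0^{-1}=C^{-1}\mu C$ for the constant $C$ arising at infinity rather than $[P_0,\mu]=0$ outright, so that one step still wants a word of justification, but this does not change the fact that your argument matches the paper's.
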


\begin{proof}  From Proposition \ref{samestokes}, it suffices to prove that
$
J_i J_{i+\frac1s}P_0=D_i \, e^{2\pi\i m/s} e^{2\pi\i M/s} P_0\,  D_i^{-1}.
$
Exactly as for the case of (\ref{sys1}) in section \ref{data} (Propositions \ref{psif-symm} and \ref{psi-symm}), the Stokes analysis of (\ref{sys2})
at $\lambda=0$ gives 
\[
P_0 \Phiz_f(e^{2\pi\i/s}\lambda) P_0^{-1} = \Phiz_f(\lambda)
\]
and
\begin{equation}\label{phizcyclic}
P_0 \Phiz_{i-\frac 2s}(e^{2\pi\i/s}\lambda) P_0^{-1} = \Phiz_i(\lambda).
\end{equation}
By a similar (but easier) argument, $\Phii$ satisfies 
\begin{equation}\label{phiicyclic}
P_0 \Phii(e^{2\pi\i/s}\lambda) P_0^{-1} = \Phii(\lambda) 
e^{2\pi\i m/s} e^{2\pi\i M/s}.
\end{equation}
Since (the analytic continuations of) $\Phii$ and $\Phiz_i$ satisfy the same o.d.e., there exists some $D_i\in G$ such that $\Phii=\Phiz_iD_i$.  (Classically, $D_i$ is called the connection matrix.)    Substituting this into (\ref{phiicyclic}), we obtain
\[
P_0 \Phiz_i(e^{2\pi\i/s}\lambda) D_i P_0^{-1}
= \Phiz_i D_i  e^{2\pi\i m/s} e^{2\pi\i M/s}.
\]
By  (\ref{phizcyclic}) and the definition of $J_i , J_{i+\frac 1s}$, the left hand side is
\[
\Phiz_{i+\frac 2s}(\lambda) P_0 D_i P_0^{-1} = 
\Phiz_i(\lambda) J_i J_{i+\frac 1s}P_0 D_i P_0^{-1}.
\]
We conclude that $D_i e^{2\pi\i m/s} e^{2\pi\i M/s}=J_i J_{i+\frac 1s}P_0 D_i P_0^{-1}$,
as required. 
\end{proof}

Note that
$e^{2\pi\i m/s} P_0 = e^{ 2\pi\i (m+x_0)/s}$. 
As $M$ is nilpotent, Proposition \ref{conj} gives:

\begin{cor}\label{eigenvalues}
The semisimple part of 
$K_i K_{i+\frac1s}P_0$ is conjugate in $G$ to $e^{ 2\pi\i (m+x_0)/s}$.
\end{cor}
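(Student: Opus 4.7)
The plan is to identify the Jordan decomposition of
$g:=e^{2\pi\i m/s}\,e^{2\pi\i M/s}\,P_0$
explicitly, and then invoke Proposition \ref{conj}, using that conjugation preserves the Jordan decomposition to transfer the conclusion to $K_iK_{i+\frac1s}P_0$.

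Set $A=e^{2\pi\i m/s}$, $B=e^{2\pi\i M/s}$, and $S=e^{2\pi\i(m+x_0)/s}$. Since $m,x_0\in\fh$ commute, $A\,P_0=S$, which is semisimple. Also $B$ is unipotent (since $M$ is nilpotent), and the Levelt normal form at the regular singularity $\lambda=\infty$ forces $[m,M]=0$, so $A$ commutes with $B$. The remaining point is $[P_0,B]=0$, equivalently $\Ad(P_0)M=M$. I would extract this from the cyclic symmetry (\ref{phiicyclic}) by matching formal behavior at $\lambda=\infty$ on both sides: $\lambda^m$ is fixed by $\Ad(P_0)$ (as $m\in\fh$), whereas $\lambda^M$ is sent to $\lambda^{\Ad(P_0)M}$, so consistency forces $\Ad(P_0)M=M$. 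With this in hand, $g=S\,B$ with $S$ semisimple, $B$ unipotent, and $[S,B]=0$ (as $B$ commutes with both $A$ and $P_0$). By uniqueness of the Jordan decomposition, $g_s=S$, so Proposition \ref{conj} gives that the semisimple part of $K_iK_{i+\frac1s}P_0$ is conjugate in $G$ to $S=e^{2\pi\i(m+x_0)/s}$.

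The main obstacle is the verification $\Ad(P_0)M=M$. Commutativity of $S$ and $B$ is genuinely needed: if these factors do not commute, then the semisimple part of a product $SB$ (with $S$ semisimple, $B$ unipotent) need not be conjugate to $S$. For instance in $SL_2\bC$, taking $A=\diag(2,\tfrac12)$ and the unipotent $B=\bsp 2&1\\-1&0 \esp$ gives $AB$ with eigenvalues $2\pm\sqrt3$, not $2$ and $\tfrac12$. Thus the commutativity, forced here by the cyclic $\bZ/s$ symmetry of the system, is essential; once it is secured, the identification of $g_s$ is immediate.
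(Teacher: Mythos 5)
Your proof is correct and follows essentially the same route as the paper: the paper's own justification consists of noting that $e^{2\pi\i m/s}P_0=e^{2\pi\i (m+x_0)/s}$ and that $M$ is nilpotent, then reading the conclusion off from Proposition \ref{conj}. The only difference is that you make explicit the commutativity ($[m,M]=0$ and $\Ad(P_0)M=M$, extracted from the Levelt form and the cyclic symmetry) needed for $e^{2\pi\i (m+x_0)/s}$ to genuinely be the semisimple Jordan factor of the product --- a point the paper leaves implicit, and which your $SL_2\bC$ example correctly shows is not vacuous.
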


(By the semisimple part of an element $g$ of a linear algebraic group we mean $g^{ss}$ where  $g=g^{ss}g^{u}=g^{u}g^{ss}$ is the multiplicative Jordan-Chevalley decomposition. Here $g^{ss}$ is semisimple and $g^{u}$ is unipotent.)

Now we can give our main applications of the results in sections \ref{data} and \ref{lie}.

Recall (subsection \ref{lie2}) that we have the map
$\chi: G \to \bC^l$ and Steinberg cross-section $C^\simple$,
with $C^\simple(\bC^l)=\cR^\simple$ and $G^{\text{reg}}/G\cong \cR^\simple$.
Consider the diagram
\[
\begin{CD}
\cM_{\Delta_+^\prime} = \cR^\simple
 @>{\subseteq}>>  G^{\text{reg}}
 @>>>  G^{\text{ss}}
 \\
@.   @V{\chi}VV    @V{\chi}VV   
\\
@.  \bC^l @>{=}>> \bC^l
\end{CD}
\]
where the map $ G^{\text{reg}} \to G^{\text{ss}}$ is defined by 
$g=g^{ss}g^{u}=g^{u}g^{ss} \mapsto g^{ss}$.  It is known (\cite{S}, Theorem 3) that the induced map 
$G^{reg}/G\cong G^{ss}/G$ on the respective spaces of $G$-conjugacy classes 
is an equivalence.

\begin{thm}\label{explicitQQPi}  Let $w$ be (any) local radial solution near zero of the tt*-Toda equations, with asymptotic data $m$ at $z\to 0$. Then $M^{(0)}_{\,w}=K_1 K_{1+\frac1s}P_0$ can be computed explicitly as
$M^{(0)}_{\,w}= C^\simple(  \chi (  e^{ 2\pi\i (m+x_0)/s}  )  )$.
\end{thm}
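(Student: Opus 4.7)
The plan is to combine three ingredients that are already in place: the conjugacy statement for $M^{(0)}_{\,w}$ (Corollary \ref{eigenvalues}), the identification $\cM_{\Delta'_+}=\cR^\simple$ (Theorem \ref{thm:crosssection}), and the Steinberg cross-section property (Proposition \ref{pro:S2}). The proof will essentially be an application of the commutative diagram appearing just before the theorem, so it should be short.

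First I would observe that, by construction, $M^{(0)}_{\,w}=K_1 K_{1+\frac1s}P_0 \in \cM_{\Delta'_+}$. Using Theorem \ref{thm:crosssection}, $\cM_{\Delta'_+}=\cR^\simple\subseteq G^{\text{reg}}$, so $M^{(0)}_{\,w}$ is regular, and Proposition \ref{pro:S2} (Steinberg's cross-section theorem) gives immediately
\[
M^{(0)}_{\,w}= C^\simple\bigl(\chi(M^{(0)}_{\,w})\bigr).
\]
Thus everything reduces to computing $\chi(M^{(0)}_{\,w})$.

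Next, I would invoke the fact that characters are class functions and that the unipotent part contributes trivially to traces in any representation, so $\chi(g)=\chi(g^{ss})$ for every $g\in G$; this is precisely the content of the right-hand square of the diagram preceding the theorem (the equivalence $G^{\text{reg}}/G\cong G^{\text{ss}}/G$). By Corollary \ref{eigenvalues}, the semisimple part of $M^{(0)}_{\,w}$ is $G$-conjugate to $e^{2\pi\i(m+x_0)/s}$, and since $\chi$ is $G$-invariant we conclude
\[
\chi(M^{(0)}_{\,w})=\chi\bigl((M^{(0)}_{\,w})^{ss}\bigr)=\chi\bigl(e^{2\pi\i(m+x_0)/s}\bigr).
\]
Substituting back into the displayed formula for $M^{(0)}_{\,w}$ gives the claim.

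I do not expect a serious obstacle; every tool has been set up in the preceding sections. The only small point to check carefully is that one is genuinely entitled to apply Steinberg's cross-section on the regular locus, i.e.\ that $M^{(0)}_{\,w}$ lies in $G^{\text{reg}}$. But this is automatic from Theorem \ref{thm:crosssection}, since any element of $\cR^\simple$ is, by Proposition \ref{pro:S2}, in the image of a cross-section of $\chi|_{G^{\text{reg}}}$ and therefore regular. The (mild) conceptual content of the theorem is thus simply that the abstract Stokes datum $M^{(0)}_{\,w}\in\cM_{\Delta'_+}$ coincides with the canonical Steinberg representative of its own conjugacy class, and that this conjugacy class is pinned down by the asymptotic datum $m$ through $e^{2\pi\i(m+x_0)/s}$.
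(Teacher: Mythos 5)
Your proposal is correct and follows exactly the same route as the paper: the paper's proof is precisely the chain $M^{(0)}_{\,w}= C^\simple(\chi(M^{(0)}_{\,w}))= C^\simple(\chi((M^{(0)}_{\,w})^{ss}))=C^\simple(\chi(e^{2\pi\i (m+x_0)/s}))$, justified by Theorem \ref{thm:crosssection}, the equivalence $G^{\text{reg}}/G\cong G^{\text{ss}}/G$, and Corollary \ref{eigenvalues}. Your added remark that one must check $M^{(0)}_{\,w}\in G^{\text{reg}}$ is a sensible explicit articulation of what the paper leaves implicit in its diagram.
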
 

\begin{proof}
$M^{(0)}_{\,w}= C^\simple(  \chi ( M^{(0)}_{\,w} ))
= C^\simple(  \chi ( (M^{(0)}_{\,w})^{ss} ))
=C^\simple(  \chi (  e^{ 2\pi\i (m+x_0)/s}  )  )$.
\end{proof}

Next we consider the space of all such $M^{(0)}_{\,w}$.  Because $(M^{(0)}_{\,w})^{ss}$
is conjugate to an element of a compact real form, it is natural to consider the diagram
\[
\begin{CD}
\cM_{\Delta_+^\prime}^{\text c} = \cR^{\simple,c}
 @>{\subseteq}>>  G^{\text{reg,c}}
 @>>>  G^{\text{ss,c}}
 \\
@.   @V{\chi}VV    @V{\chi}VV   
\\
@.  \bC^l @>{=}>> \bC^l
\end{CD}
\]
where $X^{\text c}$ denotes the subspace of $X$ consisting of elements whose semisimple part is conjugate in $G$ to an element of the standard compact real form
$G_{\text{cpt}}$ (Definition \ref{compact}).

We have $G^{\text{ss,c}}/G\cong G_{\text{cpt}}/G_{\text{cpt}}$  (Lemma 5.5 of \cite{GH}).
Let us recall some well known facts concerning this space. 

First, the fundamental Weyl chamber is the convex cone consisting of points $y\in \ii\fh_\sharp$ which satisfy
the inequalities
$\alpha_i^{\text{real}}(y)\ge 0$  $(1\le i\le l)$,
where $\alpha^{\text{real}}=(2\pi\ii)^{-1}\alpha\in (\ii\fh_\sharp)^\ast$ is the real root corresponding to the (complex) root $\alpha$.  
The fundamental Weyl alcove $\fA$
is the convex polytope defined by the inequalities
\begin{equation}\label{inequalities}
\alpha_i^{\text{real}}(y)\ge 0 \ (1\le i\le l), \ \psi^{\text{real}}(y)\le 1.
\end{equation}
When $G$ is simply-connected (as we are assuming), this convex polytope parametrises the conjugacy classes of $G_{\text{cpt}}$, i.e.\ we have $ G_{\text{cpt}}/G_{\text{cpt}}\cong\fA$.

Let us denote the
$G$-conjugacy class of $A_0\in G_{\text{cpt}}$ by $[A_0]$.  Then, for any 
$A\in[A_0]$, there is a unique $y\in\ii\fh_\sharp$ satisfying (\ref{inequalities}) such that $[A]=[e^{2\pi\i y}]$.

\begin{pro}
For $A=(M^{(0)}_{\,w})^{\text{ss}}$ we have $y=(m+x_0)/s$.
\end{pro}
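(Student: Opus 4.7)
The plan is to verify directly that $y=(m+x_0)/s$ lies in the fundamental Weyl alcove $\fA$, and then invoke the uniqueness of the alcove representative of a conjugacy class. Corollary \ref{eigenvalues} already hands us half of what is needed: $(M^{(0)}_{\,w})^{\text{ss}}$ is conjugate in $G$ to $e^{2\pi\i(m+x_0)/s}$, so $[A]=[e^{2\pi\i y}]$ holds with $y=(m+x_0)/s$. The only remaining content is to confirm that this specific $y$ satisfies the alcove inequalities (\ref{inequalities}), at which point uniqueness pins it down.

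To that end, I would compute $\alpha_i(x_0)$ and $\psi(x_0)$ explicitly from the definitions. Since $x_0=\sum_{j=1}^l \epsilon_j$ and the $\epsilon_j$ are dual to the simple roots ($\alpha_i(\epsilon_j)=\delta_{ij}$), we get $\alpha_i(x_0)=1$ for $1\le i\le l$. Using the expansion $\psi=\sum_{i=1}^l q_i\alpha_i$ and the formula $s=1+\sum q_i$ from section \ref{back}, this yields $\psi(x_0)=\sum q_i=s-1$. With these in hand the two alcove conditions for $y=(m+x_0)/s$ translate straightforwardly:
\begin{align*}
\alpha_i^{\text{real}}(y)\ge 0 \ &\Longleftrightarrow\ \alpha_i(m)\ge -1\quad (1\le i\le l),\\
\psi^{\text{real}}(y)\le 1 \ &\Longleftrightarrow\ \psi(m)\le 1\ \Longleftrightarrow\ \alpha_0(m)\ge -1,
\end{align*}
where at the last step I use $\alpha_0=-\psi$.

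The crucial observation is that these $l+1$ inequalities are precisely the asymptotic admissibility conditions supplied by Proposition \ref{odepde}: they are necessary for the existence of a local radial solution near zero with asymptotic data $m$. Since $w$ is assumed to be such a solution, the inequalities hold automatically, and hence $(m+x_0)/s\in \fA$. Uniqueness of the alcove representative (the fact that $\fA$ is a fundamental domain for the affine Weyl group acting on the torus of $G_{\text{cpt}}$, which, since $G$ is simply connected, parametrises conjugacy classes of $G_{\text{cpt}}$) then forces $y=(m+x_0)/s$.

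The only mildly subtle point, and the one I would be most careful about, is the direction of the implication between the asymptotic data and the alcove: Proposition \ref{odepde} phrases $\alpha_i(m)\ge -1$ as a necessary condition for existence of the solution, which is exactly what is needed here. No additional work is required beyond the two linear-algebra computations $\alpha_i(x_0)=1$ and $\psi(x_0)=s-1$. Everything else is bookkeeping with the definitions of $\alpha^{\text{real}}$ and the alcove (\ref{inequalities}).
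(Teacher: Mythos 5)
Your proposal is correct and follows essentially the same route as the paper: both arguments reduce to the two computations $\alpha_i(x_0)=1$ and $\psi(x_0)=s-1$, translate the alcove inequalities (\ref{inequalities}) for $y=(m+x_0)/s$ into the conditions $\alpha_i(m)\ge -1$ for $i=0,\dots,l$ supplied by Proposition \ref{odepde}, and conclude by the uniqueness of the alcove representative of the conjugacy class identified in Corollary \ref{eigenvalues}. Your explicit framing of the uniqueness step merely makes visible what the paper leaves implicit in the discussion preceding the proposition.
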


\begin{proof}  From Proposition \ref{odepde}, $m$ is characterized by the conditions
\[
\text{$\alpha_i(m)\ge -1$ for $i=0,\dots,l$. }
\]
First we consider  $i=1,\dots,l$.
As $\alpha_i(x_0)=1$ here, we have
\begin{align*}
\alpha_i(m)\ge -1 &\iff  \alpha_i(m+x_0)\ge 0
\\
&\iff  \alpha_i(  (m+x_0)/s  )\ge 0
\\
&\iff  \alpha_i^{\text{real}}(2\pi\ii(m+x_0)/s)\ge 0.
\end{align*}
Next we consider $\alpha_0=-\psi$.  As
$\psi(x_0)= \sum_{i=1}^l q_i \alpha_i(x_0) = \sum_{i=1}^l q_i = s-1$, we have
\begin{align*}
\alpha_0(m)\ge -1 &\iff  \psi(m)\le 1
\\
&\iff  \psi(m+x_0)\le s
\\
&\iff  \psi( (m+x_0)/s )\le 1
\\
&\iff \psi^{\text{real}}(2\pi\ii(m+x_0)/s)\le 1.
\end{align*}
From these two calculations we see that $y=(m+x_0)/s$.
\end{proof}

This gives our description of the space of Stokes data:

\begin{thm}\label{convexset} 
We have a natural bijection between

(a) the space of Stokes data $M^{(0)}_{\,w}$ for
local radial solutions near zero of the tt*-Toda equations, and

(b)  the fundamental Weyl alcove $\fA$ of $G_{\text{cpt}}$ when
$\fg\ne \fa_l,\fd_{2m+1},\fe_6$, and the set
 $\fA^\sigma =\{ y\in\fA \st \sigma(y)=y \}$
when $\fg= \fa_l,\fd_{2m+1},\fe_6$.
\end{thm}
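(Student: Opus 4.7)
\textbf{Proof plan for Theorem \ref{convexset}.} The plan is to assemble the ingredients already in place: Theorem \ref{explicitQQPi}, Corollary \ref{eigenvalues}, the proposition immediately preceding the theorem, Proposition \ref{odepde}, Theorem \ref{thm:crosssection}, and the diagram displayed above the theorem.

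First, I would construct the map (a)$\to$(b). For any local radial solution $w$ with asymptotic datum $m$, Corollary \ref{eigenvalues} identifies the semisimple part $(M^{(0)}_{\,w})^{\text{ss}}$, up to $G$-conjugacy, with $e^{2\pi\sqrt{-1}(m+x_0)/s}$. The composite
\[
\cM_{\Delta_+^\prime}^{\text c}=\cR^{\simple,c}\ \longrightarrow\ G^{\text{ss,c}}/G\ \cong\ G_{\text{cpt}}/G_{\text{cpt}}\ \cong\ \fA
\]
is a bijection: the first equality is Theorem \ref{thm:crosssection}; passage to $G^{\text{ss,c}}/G$ via Jordan--Chevalley is a bijection by Theorem 3 of \cite{S}; the identification $G^{\text{ss,c}}/G\cong G_{\text{cpt}}/G_{\text{cpt}}$ is Lemma 5.5 of \cite{GH}; and the final parametrization by $\fA$ holds since $G$ is simply-connected. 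The proposition preceding the theorem shows that this composite sends $M^{(0)}_{\,w}$ to the alcove point attached to $y=(m+x_0)/s$.

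Next, I would establish that the image coincides with $\fA$ (or, in the Frobenius cases, $\fA^\sigma$). Proposition \ref{odepde} characterizes admissible asymptotic data as $\{m\in\fh_\sharp : \alpha_i(m)\ge -1 \text{ for all } i=0,\dots,l\}$, and the calculation in the preceding proposition shows that $m\mapsto y=(m+x_0)/s$ bijects this set onto $\fA$. For surjectivity onto (a), given $y\in\fA$ I would invert this formula to obtain $m$, and then apply the Iwasawa-factorization construction of section \ref{local} (with $k_i$ read off from $m$ via (\ref{m})) to produce a local radial solution $w$ whose Stokes datum $M^{(0)}_{\,w}$ maps back to $y$ by the first step. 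Thus (a) and $\fA$ are in bijection under mutually inverse maps.

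Third, I would treat the Frobenius types $\fg=\fa_l,\fd_{2m+1},\fe_6$. Here the requirement $\sigma(w)=w$ in Definition \ref{tt*-Toda} passes to the leading asymptotic and forces $\sigma(m)=m$. Since $\sigma$ permutes the simple roots by the permutation $\nu$ of (\ref{autonu}), it permutes the dual basis $\{\epsilon_i\}$ and therefore fixes $x_0=\sum_i\epsilon_i$; hence $\sigma(y)=y$ iff $\sigma(m)=m$, and the image in $\fA$ reduces to $\fA^\sigma$. For the remaining types, Remark \ref{outer} states that $\sigma|_\fh=\mathrm{id}$, so the Frobenius condition is automatic and the image is all of $\fA$.

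The main obstacle I anticipate is the surjectivity statement at the boundary of $\fA$. The Iwasawa-factorization construction of section \ref{local} produces solutions cleanly in the open alcove $\alpha_i(m)>-1$; at the walls $\alpha_i(m)=-1$ the factorization degenerates and a limiting argument, extending the one in \cite{GIL3} for $SL_{n+1}\bC$, is required. A secondary check is that, when $\sigma(m)=m$ in the Frobenius types, the $\sigma$-equivariance of the input data $\omega$ propagates through the Iwasawa factorization to yield $\sigma(w)=w$; this should follow from uniqueness of the factorization but warrants explicit verification.
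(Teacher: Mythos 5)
Your proposal is correct and follows essentially the same route as the paper, which states the theorem as a direct assembly of Theorem \ref{explicitQQPi}, the diagram through $G^{\text{ss,c}}/G\cong G_{\text{cpt}}/G_{\text{cpt}}\cong\fA$, Proposition \ref{odepde}, and the computation $y=(m+x_0)/s$, concluding with the same bijection $\cA\to\fA$, $m\mapsto\tfrac{2\pi\i}{s}(m+x_0)$. The two caveats you flag (solutions at the walls $\alpha_i(m)=-1$, and $\sigma$-equivariance surviving the Iwasawa factorization) are exactly the points the paper disposes of by deferring to \cite{GIL3} and by asserting that the construction satisfies all conditions of Definition \ref{tt*-Toda}.
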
 

The space of asymptotic data
\[
\cA=\{ m\in \fh_\sharp \st 
\alpha_i(m)\ge -1, i=0,\dots,l \}
\]
(and $\cA^\sigma$)
is, of course, also a convex polytope, although it depends on a specific way of writing the tt*-Toda equations.  Our map 
\[
\cA\to\fA,\quad
m\mapsto \tfrac{2\pi \i}{s}(m+x_0)
\]
identifies it naturally with the fundamental Weyl alcove.

\appendix
\section{Formal solutions}\label{sec:formalsolution}

This section is included primarily as motivation for Lemma \ref{lm:formalsolution}. We sketch a proof of the existence of a formal solution to the equation
\begin{equation}\label{eq:irregular}
\frac{d\Psi}{d\xi}=
\left(
\sum_{k=-r-1}^\infty A_{k+1} \xi^k
\right) 
\Psi
\end{equation}
following \cite{FIKN06}, Proposition 1.1, but using Lie-theoretic notation as far as possible.  Here $r\in\bN$ and all coefficients $A_k$ belong to $\fg$, which we take to be the Lie algebra of a (simple) matrix Lie group $G$. 

We assume that the leading coefficient $A_{-r}$ is regular, hence contained in a unique Cartan subalgebra $\fh_1$ (thus we have $\beta(A_{-r})\ne0$ for all roots $\beta\in\Delta_1$ with respect to $\fh_1$).
 
 Let $P\in G$.  Then $\Lambda_{-r}=\Ad(P^{-1})A_{-r}$ is also regular, and contained in a unique Cartan subalgebra $\fh_2$ (namely $\Ad(P^{-1})\fh_1$). Let $\fg=\fh_2\oplus(\oplus_\alpha \fg_\alpha)$ be the eigenspace decomposition for the adjoint action of $\fh_2$.

\begin{pro}\label{pro:formalsolution}
There is a unique formal fundamental solution $\Psi_f$ of equation 
(\ref{eq:irregular}) of the form
\begin{equation}\label{eq:solution}
\Psi_f(\xi)=P
\left(
\sum_{k=0}^\infty \psi_k\xi^k
\right)
\exp^{\Lambda(\xi)}, 
\end{equation}
where $\psi_0=I$ and  $\displaystyle\Lambda(\xi)=\Lambda_0\log \xi+\sum_{k=-r}^{-1}\frac{\Lambda_{k}}{k}\xi^{k}$ with all $\Lambda_k\in\fh_2$.
\end{pro}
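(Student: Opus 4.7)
The plan is to substitute the ansatz into equation (\ref{eq:irregular}) and solve the resulting formal recursion order by order, exploiting the splitting $\fg = \fh_2 \oplus \fm$ (with $\fm = \oplus_\alpha \fg_\alpha$) together with the regularity of $\Lambda_{-r}$. Because $P$ is constant and $\Lambda$ takes values in the abelian Cartan $\fh_2$, differentiating $\Psi_f = P\Phi\,e^\Lambda$ and cancelling common factors reduces (\ref{eq:irregular}) to the formal identity
$$
\Phi' + \Phi\,\Lambda' \;=\; \tilde B\,\Phi,\qquad \tilde B := \Ad(P^{-1})A,
$$
whose leading coefficient at $\xi=0$ is $\Lambda_{-r}$ on both sides, so the $\xi^{-r-1}$ order is handled automatically.

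Matching the coefficient of $\xi^m$ for $m\ge -r$ and isolating the highest-indexed new unknowns on each side, the recursion takes the schematic form
$$
-\,\ad(\Lambda_{-r})(\psi_{m+r+1}) \;+\; \tilde N_m \;=\; R_m,
$$
where $\tilde N_m = \Lambda_{m+1}$ for $-r\le m\le -1$, $\tilde N_m = (m+1)\psi_{m+1}$ for $m\ge 0$, and $R_m$ is polynomial in the data of index strictly below $m+r+1$ and $m+1$. The decisive structural observation is that $\ad\Lambda_{-r}$ vanishes on $\fh_2$ and is invertible on $\fm$ (it scales each root space $\fg_\alpha$ by $\alpha(\Lambda_{-r})\neq 0$, by regularity of $\Lambda_{-r}$), so $[\Lambda_{-r},\psi_{m+r+1}]\in\fm$ and the component $(\psi_{m+r+1})_{\fh_2}$ is absent from the equation. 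Projecting onto $\fh_2$ and $\fm$ therefore decouples the recursion: the $\fh_2$-part uniquely determines $\Lambda_{m+1}$ (for $-r\le m\le -1$) or $(\psi_{m+1})_{\fh_2}$ (for $m\ge 0$; solvable because the coefficient $m+1\ge 1$ is nonzero), while the $\fm$-part reads $-[\Lambda_{-r},(\psi_{m+r+1})_\fm] = \text{known}$ and is uniquely solvable by invertibility of $\ad\Lambda_{-r}|_\fm$.

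I would then run induction on $m$ to fill in all the data: the steps $m=-r,\dots,-1$ produce $\Lambda_{-r+1},\dots,\Lambda_0$ together with the $\fm$-parts of $\psi_1,\dots,\psi_r$, and the steps $m\ge 0$ produce the $\fh_2$-parts of $\psi_1,\psi_2,\dots$ and the $\fm$-parts of $\psi_{r+1},\psi_{r+2},\dots$; every coefficient is fixed exactly once, yielding existence and uniqueness of $\Psi_f$. The only non-routine ingredient is the invertibility of $\ad\Lambda_{-r}|_\fm$, which is precisely the regularity hypothesis on $A_{-r}$; the main bookkeeping burden is verifying that $R_m$ at step $m$ really depends only on data determined at strictly earlier steps. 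This requires noting that the a priori troublesome cross-terms $(\psi_k)_{\fh_2}\Lambda_{q+1}$ for $k\in[m+2,m+r]$ cancel against their counterparts in $\tilde B\Phi$, because $\Lambda_{q+1}=\pi_{\fh_2}(\tilde B\text{-coefficient})$ by construction and is central in the abelian $\fh_2$.
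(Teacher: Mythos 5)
Your overall strategy---substitute the ansatz, split $\fg$ into $\fh_2\oplus\fm$ with $\fm=\oplus_\alpha\fg_\alpha$, and invert $\ad\Lambda_{-r}$ on $\fm$ using regularity of the leading coefficient---is exactly the paper's, but the way you distribute the unknowns makes the recursion circular when $r\ge 2$, and the cancellation you invoke to repair this does not hold. Concretely, writing $\tilde A_i=\Ad(P^{-1})A_i$, the cross-terms at order $\xi^m$ are
\[
(\psi_k)_{\fh_2}\,\Lambda_{m+1-k}\;-\;\tilde A_{m+1-k}\,(\psi_k)_{\fh_2},
\qquad k=m+2,\dots,m+r .
\]
For these to vanish you would need $\tilde A_{m+1-k}=\Lambda_{m+1-k}$; but $\tilde A_{m+1-k}$ has in general a nonzero component in $\fm$, and (beyond the very first step) even its Cartan component differs from $\Lambda_{m+1-k}$, because $\Lambda_{m+1-k}$ is by construction the $\fh_2$-projection of the full corrected coefficient $R$ at that order (which already contains products of lower-order data), not of $\tilde A_{m+1-k}$ alone. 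In particular the piece $-(\tilde A_{m+1-k})_{\fm}\,(\psi_k)_{\fh_2}$ survives; it has nonzero weight under $\ad\,\fh_2$ and therefore feeds into the $\fm$-projection that is supposed to determine $(\psi_{m+r+1})_{\fm}$. Since your scheme only produces $(\psi_k)_{\fh_2}$ at the later step $k-1>m$, the induction is not well-founded: one is left with an infinite linear system whose unique solvability is no longer automatic. (For $r=1$ the offending range $k\in[m+2,m+r]$ is empty and your recursion does close up, but the proposition is stated for arbitrary $r$.)

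The paper circumvents this by changing the normalization: it seeks a solution of the form $P\bigl(\sum_{k\ge0}Y_k\xi^k\bigr)e^{\Upsilon(\xi)}$ with $\Upsilon(\xi)=\Lambda(\xi)+\sum_{k\ge1}\tfrac{\Lambda_k}{k}\xi^k$, i.e.\ it allows \emph{positive} powers of $\xi$ in the exponent (with coefficients in $\fh_2$) and in exchange requires $Y_k\in\oplus_\alpha\fg_\alpha$ for $k\ge1$, so that the series coefficients carry no Cartan component at all. Each order of the equation then reads $\Lambda_{-r+k}+[Y_k,\Lambda_{-r}]=F_{-r+k}$ with $F_{-r+k}$ depending only on data of strictly smaller index; the two projections determine $\Lambda_{-r+k}$ and $Y_k$ uniquely, and the factor $\exp\bigl(\sum_{k\ge1}\Lambda_k\xi^k/k\bigr)$ is reabsorbed into the power series at the end to recover the stated form (\ref{eq:solution}). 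You could rescue your write-up either by adopting this normalization from the outset, or by proving existence and uniqueness in the normalized form first and then converting.
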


\begin{proof} Consider 
\begin{equation}\label{eq:test}
\Psi_f(\xi)=P
\left(
\sum_{k=0}^\infty Y_k \xi^k
\right)
\exp^{\Upsilon(\xi)}.
\end{equation} 
We shall show that it is possible to find $Y_k$ and $\Upsilon(\xi)=\Lambda(\xi)+\sum_{k=1}^\infty \frac{\Lambda_k}{k}\xi^k$ with $Y_0=I$, all $Y_k\in \oplus_\alpha\fg_\alpha$,
and all $\Lambda_k\in\fh_2$, such that (\ref{eq:test}) formally satisfies (\ref{eq:irregular}). As (\ref{eq:test}) can be rewritten in the required form (\ref{eq:solution}),
this will be sufficient.

Substitution into (\ref{eq:irregular}) leads to the recurrence relations
\begin{equation}\label{eq:relationY}
\Lambda_{-r+k}+[Y_k,\Lambda_{-r}]=F_{-r+k}
\end{equation}
for $k\in\bN$, 
where $F_{-r+1}=P^{-1}A_{-r+1}P$ and, for $k\ge2$,
\begin{eqnarray*}
F_{-r+k}&=&P^{-1}A_{-r+k}P+\sum_{n=1}^{k-1}(P^{-1}A_{-r+k-n}PY_n-Y_n\Lambda_{-r+k-n})\\
&&+\left\{\begin{array}{ll}0&k=2,3,\dots,r \\ -(k-r)Y_{-r+k} & k=r+1,r+2,\dots
\end{array}\right.
\end{eqnarray*}
Taking components of (\ref{eq:relationY}) in
$\fg^\prime=\fh_2$,
$\fg^{\prime\prime}=\oplus_\alpha \fg_\alpha$
we obtain
\begin{equation*}
\Lambda_{-r+k}=\text{Proj}^\prime(F_{-r+k}),\quad
[Y_k,\Lambda_{-r}]=\text{Proj}^{\prime\prime}(F_{-r+k}).
\end{equation*}
As $\Ad \,\Lambda_{-r}$ is invertible on $\oplus_\alpha \fg_\alpha$,  all of the 
$\Lambda_k$, $Y_k$ can be determined recursively.  
\end{proof}

\section{Singular directions and the (enhanced) Coxeter Plane}\label{sec:coxeterplane}

Let $\fg$ be a complex simple Lie algebra with $\rank\,\fg = l>1$. Let $\fh$ be a Cartan subalgebra, and $\Delta_+$ a choice of positive roots. The vectors $H_\alpha\in\fh$ are defined as in section \ref{back} by
$B(h,H_\alpha)=\alpha(h)$. They span a polyhedron in the vector space $\fh$.
The Coxeter Plane consists of a certain real two-dimensional subspace of $\fh$ together with the
orthogonal projections of all points $H_\alpha\in\fh$
onto this plane.  The rays from the origin to these points (the \lq\lq spokes\rq\rq), and/or the concentric circles passing through these points 
(the \lq\lq wheels\rq\rq) may be drawn.  For the Lie algebra $\fe_8$ this picture is very well known and appears as the frontispiece of Coxeter's book \cite{Co74}.
\begin{figure}[h]
\scalebox{0.7}{\includegraphics{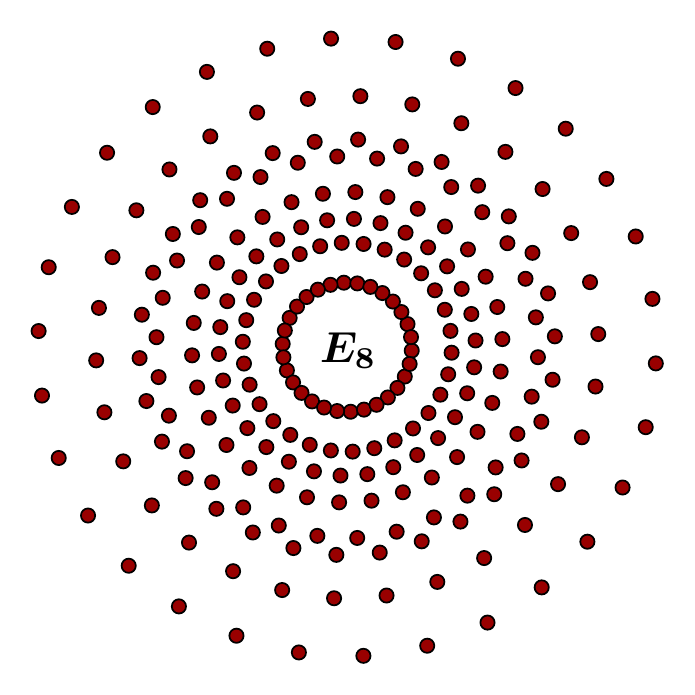}}
\caption{Coxeter Plane with root projections for $\fe_8$}\label{E8}
\end{figure}
The version in Figure \ref{E8} is taken from \cite{Ca17}. There are $60$ spokes and $8$ wheels in this case.

We shall sketch three descriptions of the Coxeter Plane, all of them rather indirect. Then we shall explain how the Stokes data of the meromorphic o.d.e.\ of section \ref{data} (or that of section \ref{local}) provides a more direct description of the Coxeter Plane, which also illustrates its properties effectively.

The first (and, as far as we know, original) description is aesthetic:  the two-dimensional plane is chosen to give the \lq\lq most symmetrical projection\rq\rq. 

We take the second description from Kostant's article \cite{Kostant10}.  It depends on the choice of the Lie algebra element $E_+$, hence the choice of a Cartan subalgebra $\fh^\prime=\fg^{E_+}$ in apposition to $\fh$. We have the real subspace
\[
\fh^\prime_\sharp=\{ X\in\fh^\prime \st \beta(X)\in\bR
\ \text{for all}\ \beta\in\Delta^\prime\}.
\]
Recall that the Coxeter number of $\fg$ is 
$s=1+\sum_{i=1}^l q_i = \sum_{i=0}^l q_i$, where
$\psi=\sum_{i=1}^l q_i \alpha_i$ and $q_0=1$, and that $\tau=\Ad P_0$ acts on $\fh^\prime$
as a Coxeter element, with $\tau^s=1$. 
Let us choose (for simplicity) the specific coefficients
$E_+=\sum_{i=0}^l \sqrt{q_i} e_{\alpha_i}$.  Then $E_+\in \fh^\prime=\fh^\prime_\sharp\otimes\bC$ is isotropic with respect to $B$, hence defines an oriented real two-dimensional subspace $Y$ of 
$\fh^\prime_\sharp$. This is the Coxeter Plane  (with respect to the Cartan subalgebra  $\fh^\prime$). Let 
$Q:\fh^\prime_\sharp\to Y$ be orthogonal projection with respect to the (positive definite) inner product $B|_{\fh^\prime_\sharp}$.  
Then (\cite{Kostant10}, section 0.2) $Y$ is the essentially unique plane with the property that the projection $Q$ commutes with the action of the Coxeter element $\tau$. This gives a precise meaning to \lq\lq most symmetrical projection\rq\rq.

The third (more abstract) description is based on the theory of Coxeter groups (\cite{Steinberg},\cite{H},\cite{Ca17}), and in our situation this means the Weyl group of $\fg$ with respect to a Cartan subalgebra $\fh$.  We sketch this theory, referring to \cite{Ca17} and section 3.19 of \cite{H} for further details. Any product
\[
\tau=s_1\cdots s_l
\]
of reflections in simple roots is called a Coxeter element. We may assume that
\[
\tau = xy,\quad x=s_1\cdots s_k, y=s_{k+1}\cdots s_l
\]
for some $k$, where $s_1,\dots,s_k$ commute and $s_{k+1},\dots,s_l$ commute. Thus
$x^2=1$ and $y^2=1$ and $x,y$ generate a dihedral group, in fact the unique
dihedral subgroup of the Weyl group which contains $\tau$.
The subspaces 
\[
\Ker\,\alpha_1\cap\cdots\cap\Ker\,\alpha_k,
\Ker\,\alpha_{k+1}\cap\cdots\cap\Ker\,\alpha_l
\]
contain distinguished real lines $l_x,l_y$ which can be constructed explicitly from 
a certain \lq\lq Perron-Frobenius eigenvalue\rq\rq\  of the Cartan matrix. The Coxeter Plane is the real span of $l_x,l_y$.
The lines of the Coxeter Plane are given by taking the orthogonal complements of the intersections of all root hyperplanes 
$\Ker\,\alpha_i$ with this plane. 

It can be shown that $\tau$ acts on this plane as rotation through $2\pi/s$, and $x,y$ act by reflection in 
adjacent lines (given by the intersections of 
$\Ker\,\alpha_1\cap\cdots\cap\Ker\,\alpha_k,
\Ker\,\alpha_{k+1}\cap\cdots\cap\Ker\,\alpha_l$
with the plane).  It follows that there are exactly $s$ equally spaced lines in the Coxeter Plane,
whose symmetry group is the dihedral group generated by $x$ and $y$. 

As explained in \cite{Ca17}, the relation with the Cartan matrix leads to an efficient algorithm for drawing this version of the Coxeter Plane. We are grateful to Bill Casselman for the current version of \cite{Ca17} which contains these pictures.

It turns out that the Stokes data of the meromorphic o.d.e.\ of section \ref{data} provides a fourth description of the Coxeter Plane.  To see this, we observe that the diagram of singular directions is related to Kostant's plane $Y$, as follows. First we introduce the notation
\[
E_+=\Re(E_+)+\ii\Im(E_+),\quad \Re(E_+),\Im(E_+)\in
\fh^\prime_\sharp
\]
for the decomposition of $E_+$ with respect to the real subspace 
$\fh^\prime_\sharp$.

\begin{pro}\label{ident}
Let us identify Kostant's plane $Y$ with $\bC$ by identifying the orthonormal basis 
$(2/s)^{\frac12}\Re(E_+),(2/s)^{\frac12}\Im(E_+)$
with $1,\i$.  Then the points $Q(H_{\beta})$ are identified with the points $(2/s)^{\frac12}\beta(E_+)$, for all $\beta\in\fh^\prime$. 
\end{pro}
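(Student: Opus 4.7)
The plan is to establish two facts --- that $E_+$ is $B$-isotropic, and that $B(\Re(E_+),\Re(E_+)) = B(\Im(E_+),\Im(E_+)) = s/2$ --- after which the statement reduces to an orthogonal-projection computation. For isotropy, expand
\[
B(E_+,E_+) = \sum_{i,j=0}^l \sqrt{q_iq_j}\,B(e_{\alpha_i},e_{\alpha_j}).
\]
Each summand vanishes because $B(e_\alpha,e_\beta)=0$ unless $\alpha+\beta=0$, and no two roots among $\{-\psi,\alpha_1,\dots,\alpha_l\}$ are negatives of one another once $l>1$ (the simple roots are linearly independent, and $\psi=\sum q_j\alpha_j$ involves more than one simple root). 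Writing $E_+ = \Re(E_+) + \i\,\Im(E_+)$ with $\Re(E_+),\Im(E_+)\in \fh^\prime_\sharp$ and separating real and imaginary parts in $B(E_+,E_+)=0$ yields $B(\Re(E_+),\Im(E_+))=0$ and $|\Re(E_+)|^2 = |\Im(E_+)|^2$.

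To pin down the common squared length, introduce $E_- := \sum_{i=0}^l \sqrt{q_i}\,e_{-\alpha_i}$. A bracket computation using (\ref{brackets}) gives
\[
[E_+,E_-] = \sum_{i=1}^l q_iH_{\alpha_i} + q_0[e_{-\psi},e_\psi] = \sum_{i=1}^l q_iH_{\alpha_i} - H_\psi = 0,
\]
the off-diagonal terms vanishing because neither $\alpha_i-\alpha_j$ (for distinct simple roots) nor $\psi+\alpha_j$ is a root, and the final identity using $H_\psi=\sum q_iH_{\alpha_i}$. Thus $E_-\in\fg^{E_+}=\fh^\prime$, and by maximality of abelian subalgebras also $\fh^\prime=\fg^{E_-}$; hence the compact involution $\rho$ of Definition \ref{compact}, which satisfies $\rho(E_+) = -E_-$, preserves $\fh^\prime$. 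The standard fact that $\fh^\prime\cap\fg_{\text{cpt}} = \i\,\fh^\prime_\sharp$ whenever a compact real form stabilizes the Cartan $\fh^\prime$ identifies $-\rho|_{\fh^\prime}$ with the antilinear conjugation $Z\mapsto \bar Z$ of $\fh^\prime$ fixing $\fh^\prime_\sharp$. Hence $\overline{E_+} = -\rho(E_+) = E_-$, and
\[
B(E_+,\overline{E_+}) = B(E_+,E_-) = \sum_{i=0}^l q_i = s.
\]
Since $\overline{E_+} = \Re(E_+)-\i\,\Im(E_+)$, this also equals $B(\Re(E_+),\Re(E_+)) + B(\Im(E_+),\Im(E_+)) = 2|\Re(E_+)|^2$, giving $|\Re(E_+)|^2 = s/2$.

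With orthonormality of $(2/s)^{1/2}\Re(E_+)$ and $(2/s)^{1/2}\Im(E_+)$ established, the projection formula is immediate: using $B(H_\beta,Z) = \beta(Z)$ for all $Z\in\fh^\prime$,
\[
Q(H_\beta) = \tfrac{2}{s}\,\Re(\beta(E_+))\,\Re(E_+) + \tfrac{2}{s}\,\Im(\beta(E_+))\,\Im(E_+),
\]
which under the identification $(2/s)^{1/2}\Re(E_+)\leftrightarrow 1$ and $(2/s)^{1/2}\Im(E_+)\leftrightarrow \i$ becomes $(2/s)^{1/2}\beta(E_+)\in\bC$, as claimed. The main obstacle is the identification $\overline{E_+}=E_-$: while $[E_+,E_-]=0$ is routine, concluding that $-\rho|_{\fh^\prime}$ agrees with $\fh^\prime_\sharp$-conjugation requires the standard correspondence between a compact real form of $\fg$ and a Cartan subalgebra it stabilizes, which must be invoked carefully here.
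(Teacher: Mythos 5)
Your proof is correct, and its final step (the orthogonal projection formula for $Q(H_\beta)$ and the identification with $(2/s)^{1/2}\beta(E_+)$) coincides with the paper's. The difference lies in how the three key inner products $B(\Re(E_+),\Re(E_+))=B(\Im(E_+),\Im(E_+))=s/2$ and $B(\Re(E_+),\Im(E_+))=0$ are obtained. The paper extracts all three at once from the single fact $\overline{E_+}=E_-$ (conjugation with respect to $\fh^\prime_\sharp$), which it does not prove but cites from Kostant's Theorems 1.11 and 1.12. You instead derive the two needed ingredients separately and from first principles: isotropy $B(E_+,E_+)=0$ directly from orthogonality of root spaces (no two of $-\psi,\alpha_1,\dots,\alpha_l$ sum to zero when $l>1$), and the normalization $B(E_+,\overline{E_+})=s$ via your own proof that $\overline{E_+}=E_-$, assembled from $[E_+,E_-]=0$, the resulting $\rho$-stability of $\fh^\prime=\fg^{E_+}$, and the standard identification $\fh^\prime\cap\fg_{\text{cpt}}=\ii\fh^\prime_\sharp$. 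This buys a self-contained argument where the paper relies on an external reference, at the cost of some length. Two small points to tighten: the phrase \lq\lq by maximality of abelian subalgebras\rq\rq\ is loose --- the clean statement is that $\rho(\fg^{E_+})=\fg^{\rho(E_+)}=\fg^{E_-}$ is an $l$-dimensional subalgebra containing the abelian $\fh^\prime$ (since $E_-\in\fh^\prime$), hence equal to it; and you should note explicitly that the coefficients $\sqrt{q_i}$ are real, which is what makes $\rho(E_+)=-E_-$ hold on the nose.
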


\begin{proof}  We use the key fact that the complex conjugate of $E_+$ with restect to $\fh^\prime_\sharp$ is given by
$\bar E_+ = E_-$ (\cite{Kostant10}, Theorems 1.11 and 1.12). Then by direct calculation we obtain
\begin{align*}
&B(\Re(E_+),\Re(E_+))=\tfrac{s}{2}
\\
&B(\Im(E_+),\Im(E_+))=\tfrac{s}{2}
\\
&B(\Re(E_+),\Im(E_+))=0
\end{align*}
(so $E_+$ is isotropic, as stated earlier). 
The projection to $Y$ of a vector $X\in\fh^\prime_\sharp$ is
\[
Q(X)=\tfrac2s\, B(X,\Re(E_+))\Re(E_+)+
\tfrac2s\, B(X,\Im(E_+))\Im(E_+).
\]
In particular,
\begin{equation}\label{qhbeta}
Q(H_\beta)=\tfrac2s\, \beta(\Re(E_+))\Re(E_+)+
\tfrac2s\, \beta(\Im(E_+)) \Im(E_+)
\end{equation}
for any $\beta\in\Delta^\prime$.  Note that
$\beta(\Re(E_+))$ and $\beta(\Im(E_+))$ are both real.
It follows that the vector $Q(H_\beta)$ in the plane $Y$ --- under the identification of
$(2/s)^{\frac12}\Re(E_+),(2/s)^{\frac12}\Im(E_+)$ in $Y$
with $1,\i$ in $\bC$ --- corresponds to the complex number 
$(2/s)^{\frac12}\beta(\Re(E_+))+\ii (2/s)^{\frac12}\beta(\Im(E_+))$, and this is just
$(2/s)^{\frac12}\beta(E_+)$.
\end{proof}

In the proposition and its proof we have assumed that $E_+=\sum_{i=0}^l \sqrt{q_i} e_{\alpha_i}$. In the general case $E_+=\sum_{i=0}^l c^+_i e_{\alpha_i}$, (\ref{qhbeta}) becomes
\begin{equation*}
Q(H_\beta)=\tfrac{2t}s\, \beta(\Re(E_+))\Re(E_+)+
\tfrac{2t}s\, \beta(\Im(E_+)) \Im(E_+)
\end{equation*}
where $t\in\bC^\ast$ is defined by
$t( \Re(E_+)-\i \Im(E_+) ) = 
\sum_{i=0}^l ({q_i}/{c^+_i}) e_{-\alpha_i}$
(\cite{Kostant10}, Theorem 1.11). From this we obtain
$Q(H_{\beta})=((2t)/s)^{\frac12}\beta(E_+)$.  Thus the  diagram of $2s$ singular directions, together with the points 
$\beta(-E_+)$ marked with their corresponding roots $\beta\in\Delta^\prime$, is --- up to scaling and rotation --- simply the Coxeter Plane.
In particular, using the fact that the symmetry group of the Coxeter Plane is a dihedral group of order $2s$, we see that Theorem \ref{thm:singulardirection} holds for any such $\fg$, not just for the classical Lie algebras. 

Our diagram of singular directions may in fact be regarded an \lq\lq enhanced Coxeter Plane\rq\rq, because of the additional choice of $E_+$, which fixes a Cartan subalgebra in apposition and a particular Coxeter element.
This facilitates the assignment of a root $\beta$ to a ray in the Coxeter Plane: one simply takes the ray through the point $\beta(-E_+)$.  
Having made this assignment, the Coxeter element acts on the roots by clockwise rotation through $2\pi/s$. There are $2s$ systems of positive roots corresponding to this Coxeter element (in the sense of Proposition \ref{pro:P0asAgamma}), given by the $2s$ choices of \lq\lq positive sectors\rq\rq.  The head and tail of a positive sector give the associated simple roots. The roots on any two consecutive rays generate all the roots. While this information may be implicit in the (usual) Coxeter Plane, the choice of $E_+$ makes it explicit.

In the other direction, as our diagram of singular directions arises independently of Lie theory from the Stokes data of the meromorphic connection $d+\hat\alpha$ (or $d+\hat\omega$), the possibility of reproving some classical results on root systems arises.  This would be very much in the spirit of \cite{B_inv},
where the Stokes data of a  meromorphic connection 
(similar to $d+\hat\omega$)
was used rather unexpectedly to establish a result in symplectic geometry, the Ginzburg-Weinstein isomorphism.

\end{document}